\newtheorem{theorem}{Theorem}
\newtheorem{corollary}[theorem]{Corollary}
\newtheorem{proposition}[theorem]{Proposition}
\newcommand{\ASM}{\mathrm{ASM}}
\newcommand{\DASASM}{\mathrm{DASASM}}
\newcommand{\SSYT}{\mathrm{SSYT}}
\definecolor{brwn}{rgb}{0.59,0.29,0.0}
\renewcommand{\ss}{\scriptstyle}
\newcommand{\W}[5]{W\left(\raisebox{-3mm}{\psset{unit=4.6mm}\pspicture(-0.4,-0.4)(1.5,1.5)
\rput(0.5,0.5){\psline[linewidth=0.5pt](0,-0.5)(0,0.5)\psline[linewidth=0.5pt](-0.5,0)(0.5,0)}
\rput(0.5,0.5){$\ss\bullet$}\rput[r](-0.1,0.5){$\ss#1$}\rput[t](0.5,-0.1){$\ss#2$}\rput[l](1.1,0.5){$\ss#3$}\rput[b](0.5,1.1){$\ss#4$}
\endpspicture},#5\right)}
\newcommand{\WL}[3]{W\bigl(\raisebox{-1.1mm}{\psset{unit=4.6mm}\pspicture(-0.25,-0.2)(1,0.9)
\psline[linewidth=0.5pt](0,0.5)(0,0)(0.5,0)\rput(0,0){$\ss\bullet$}\rput[b](0,0.6){$\ss#1$}\rput[l](0.6,0){$\ss#2$}
\endpspicture},#3\bigr)}
\newcommand{\WR}[3]{W\bigl(\raisebox{-1.1mm}{\psset{unit=4.6mm}\pspicture(-0.4,-0.2)(0.8,0.9)
\psline[linewidth=0.5pt](0,0)(0.5,0)(0.5,0.5)\rput(0.5,0){$\ss\bullet$}\rput[r](-0.1,0){$\ss#1$}\rput[b](0.5,0.6){$\ss#2$}
\endpspicture},#3\bigr)}
\newcommand{\Wi}{\raisebox{-2mm}{\psset{unit=5.8mm}\pspicture(0.1,0)(1,1)
\rput(0.5,0.5){\psline[linewidth=0.5pt](0,-0.5)(0,0.5)\psline[linewidth=0.5pt](-0.5,0)(0.5,0)}
\rput(0.5,0.5){$\ss\bullet$}
\psdots[dotstyle=triangle*,dotscale=0.9](0.5,0.8)\psdots[dotstyle=triangle*,dotscale=0.9](0.5,0.2)
\psdots[dotstyle=triangle*,dotscale=0.9,dotangle=-90](0.8,0.5)\psdots[dotstyle=triangle*,dotscale=0.9,dotangle=-90](0.2,0.5)
\endpspicture}}
\newcommand{\Wii}{\raisebox{-2mm}{\psset{unit=5.8mm}\pspicture(0.1,0)(1,1)
\rput(0.5,0.5){\psline[linewidth=0.5pt](0,-0.5)(0,0.5)\psline[linewidth=0.5pt](-0.5,0)(0.5,0)}
\rput(0.5,0.5){$\ss\bullet$}
\psdots[dotstyle=triangle*,dotscale=0.9,dotangle=180](0.5,0.8)\psdots[dotstyle=triangle*,dotscale=0.9,dotangle=180](0.5,0.2)
\psdots[dotstyle=triangle*,dotscale=0.9,dotangle=90](0.8,0.5)\psdots[dotstyle=triangle*,dotscale=0.9,dotangle=90](0.2,0.5)
\endpspicture}}
\newcommand{\Wiii}{\raisebox{-2mm}{\psset{unit=5.8mm}\pspicture(0.1,0)(1,1)
\rput(0.5,0.5){\psline[linewidth=0.5pt](0,-0.5)(0,0.5)\psline[linewidth=0.5pt](-0.5,0)(0.5,0)}
\rput(0.5,0.5){$\ss\bullet$}
\psdots[dotstyle=triangle*,dotscale=0.9](0.5,0.8)\psdots[dotstyle=triangle*,dotscale=0.9](0.5,0.2)
\psdots[dotstyle=triangle*,dotscale=0.9,dotangle=90](0.8,0.5)\psdots[dotstyle=triangle*,dotscale=0.9,dotangle=90](0.2,0.5)
\endpspicture}}
\newcommand{\Wiv}{\raisebox{-2mm}{\psset{unit=5.8mm}\pspicture(0.1,0)(1,1)
\rput(0.5,0.5){\psline[linewidth=0.5pt](0,-0.5)(0,0.5)\psline[linewidth=0.5pt](-0.5,0)(0.5,0)}
\rput(0.5,0.5){$\ss\bullet$}
\psdots[dotstyle=triangle*,dotscale=0.9,dotangle=180](0.5,0.8)\psdots[dotstyle=triangle*,dotscale=0.9,dotangle=180](0.5,0.2)
\psdots[dotstyle=triangle*,dotscale=0.9,dotangle=-90](0.8,0.5)\psdots[dotstyle=triangle*,dotscale=0.9,dotangle=-90](0.2,0.5)
\endpspicture}}
\newcommand{\Wv}{\raisebox{-2mm}{\psset{unit=5.8mm}\pspicture(0.1,0)(1,1)
\rput(0.5,0.5){\psline[linewidth=0.5pt](0,-0.5)(0,0.5)\psline[linewidth=0.5pt](-0.5,0)(0.5,0)}
\rput(0.5,0.5){$\ss\bullet$}
\psdots[dotstyle=triangle*,dotscale=0.9](0.5,0.8)\psdots[dotstyle=triangle*,dotscale=0.9,dotangle=180](0.5,0.2)
\psdots[dotstyle=triangle*,dotscale=0.9,dotangle=-90](0.2,0.5)\psdots[dotstyle=triangle*,dotscale=0.9,dotangle=90](0.8,0.5)
\endpspicture}}
\newcommand{\Wvi}{\raisebox{-2mm}{\psset{unit=5.8mm}\pspicture(0.1,0)(1,1)
\rput(0.5,0.5){\psline[linewidth=0.5pt](0,-0.5)(0,0.5)\psline[linewidth=0.5pt](-0.5,0)(0.5,0)}
\rput(0.5,0.5){$\ss\bullet$}
\psdots[dotstyle=triangle*,dotscale=0.9,dotangle=180](0.5,0.8)\psdots[dotstyle=triangle*,dotscale=0.9](0.5,0.2)
\psdots[dotstyle=triangle*,dotscale=0.9,dotangle=90](0.2,0.5)\psdots[dotstyle=triangle*,dotscale=0.9,dotangle=-90](0.8,0.5)
\endpspicture}}
\newcommand{\WLi}{\raisebox{-0.5mm}{\psset{unit=5.8mm}\pspicture(0.4,0.4)(1.02,1.1)
\rput(0.5,0.5){\psline[linewidth=0.5pt](0,0)(0,0.5)\psline[linewidth=0.5pt](0,0)(0.5,0)}
\rput(0.5,0.5){$\ss\bullet$}\psdots[dotstyle=triangle*,dotscale=0.9](0.5,0.8)
\psdots[dotstyle=triangle*,dotscale=0.9,dotangle=-90](0.8,0.5)
\endpspicture}}
\newcommand{\WLii}{\raisebox{-0.5mm}{\psset{unit=5.8mm}\pspicture(0.4,0.4)(1.02,1.1)
\rput(0.5,0.5){\psline[linewidth=0.5pt](0,0)(0,0.5)\psline[linewidth=0.5pt](0,0)(0.5,0)}
\rput(0.5,0.5){$\ss\bullet$}\psdots[dotstyle=triangle*,dotscale=0.9,dotangle=180](0.5,0.8)
\psdots[dotstyle=triangle*,dotscale=0.9,dotangle=90](0.8,0.5)
\endpspicture}}
\newcommand{\WLiii}{\raisebox{-0.5mm}{\psset{unit=5.8mm}\pspicture(0.4,0.4)(1.02,1.1)
\rput(0.5,0.5){\psline[linewidth=0.5pt](0.5,0)(0,0)(0,0.5)}
\rput(0.5,0.5){$\ss\bullet$}\psdots[dotstyle=triangle*,dotscale=0.9](0.5,0.8)
\psdots[dotstyle=triangle*,dotscale=0.9,dotangle=90](0.8,0.5)
\endpspicture}}
\newcommand{\WLiv}{\raisebox{-0.5mm}{\psset{unit=5.8mm}\pspicture(0.4,0.4)(1.02,1.1)
\rput(0.5,0.5){\psline[linewidth=0.5pt](0,0)(0,0.5)\psline[linewidth=0.5pt](0,0)(0.5,0)}
\rput(0.5,0.5){$\ss\bullet$}\psdots[dotstyle=triangle*,dotscale=0.9,dotangle=180](0.5,0.8)
\psdots[dotstyle=triangle*,dotscale=0.9,dotangle=-90](0.8,0.5)
\endpspicture}}
\newcommand{\WRi}{\raisebox{-0.5mm}{\psset{unit=5.8mm}\pspicture(0,0.4)(0.65,1.1)
\rput(0.5,0.5){\psline[linewidth=0.5pt](-0.5,0)(0,0)(0,0.5)}
\rput(0.5,0.5){$\ss\bullet$}
\psdots[dotstyle=triangle*,dotscale=0.9](0.5,0.8)
\psdots[dotstyle=triangle*,dotscale=0.9,dotangle=90](0.2,0.5)
\endpspicture}}
\newcommand{\WRii}{\raisebox{-0.5mm}{\psset{unit=5.8mm}\pspicture(0,0.4)(0.65,1.1)
\rput(0.5,0.5){\psline[linewidth=0.5pt](-0.5,0)(0,0)(0,0.5)}
\rput(0.5,0.5){$\ss\bullet$}
\psdots[dotstyle=triangle*,dotscale=0.9,dotangle=180](0.5,0.8)
\psdots[dotstyle=triangle*,dotscale=0.9,dotangle=-90](0.2,0.5)
\endpspicture}}
\newcommand{\WRiii}{\raisebox{-0.5mm}{\psset{unit=5.8mm}\pspicture(0,0.4)(0.65,1.1)
\rput(0.5,0.5){\psline[linewidth=0.5pt](-0.5,0)(0,0)(0,0.5)}
\rput(0.5,0.5){$\ss\bullet$}
\psdots[dotstyle=triangle*,dotscale=0.9](0.5,0.8)
\psdots[dotstyle=triangle*,dotscale=0.9,dotangle=-90](0.2,0.5)
\endpspicture}}
\newcommand{\WRiv}{\raisebox{-0.5mm}{\psset{unit=5.8mm}\pspicture(0,0.4)(0.65,1.1)
\rput(0.5,0.5){\psline[linewidth=0.5pt](-0.5,0)(0,0)(0,0.5)}
\rput(0.5,0.5){$\ss\bullet$}
\psdots[dotstyle=triangle*,dotscale=0.9,dotangle=180](0.5,0.8)
\psdots[dotstyle=triangle*,dotscale=0.9,dotangle=90](0.2,0.5)
\endpspicture}}
\newcommand{\Vi}{\raisebox{-3.2mm}{\pspicture(-0.1,-0.1)(1.1,1.1)
\rput(0.5,0.5){\psline[linewidth=0.5pt](0,-0.5)(0,0.5)\psline[linewidth=0.5pt](-0.5,0)(0.5,0)}
\rput(0.5,0.5){$\ss\bullet$}
\psdots[dotstyle=triangle*,dotscale=1](0.5,0.8)\psdots[dotstyle=triangle*,dotscale=1](0.5,0.2)
\psdots[dotstyle=triangle*,dotscale=1,dotangle=-90](0.8,0.5)\psdots[dotstyle=triangle*,dotscale=1,dotangle=-90](0.2,0.5)
\endpspicture}}
\newcommand{\Vii}{\raisebox{-3.2mm}{\pspicture(-0.1,-0.1)(1.1,1.1)
\rput(0.5,0.5){\psline[linewidth=0.5pt](0,-0.5)(0,0.5)\psline[linewidth=0.5pt](-0.5,0)(0.5,0)}
\rput(0.5,0.5){$\ss\bullet$}
\psdots[dotstyle=triangle*,dotscale=1,dotangle=180](0.5,0.8)\psdots[dotstyle=triangle*,dotscale=1,dotangle=180](0.5,0.2)
\psdots[dotstyle=triangle*,dotscale=1,dotangle=90](0.8,0.5)\psdots[dotstyle=triangle*,dotscale=1,dotangle=90](0.2,0.5)
\endpspicture}}
\newcommand{\Viii}{\raisebox{-3.2mm}{\pspicture(-0.1,-0.1)(1.1,1.1)
\rput(0.5,0.5){\psline[linewidth=0.5pt](0,-0.5)(0,0.5)\psline[linewidth=0.5pt](-0.5,0)(0.5,0)}
\rput(0.5,0.5){$\ss\bullet$}
\psdots[dotstyle=triangle*,dotscale=1](0.5,0.8)\psdots[dotstyle=triangle*,dotscale=1](0.5,0.2)
\psdots[dotstyle=triangle*,dotscale=1,dotangle=90](0.8,0.5)\psdots[dotstyle=triangle*,dotscale=1,dotangle=90](0.2,0.5)
\endpspicture}}
\newcommand{\Viv}{\raisebox{-3.2mm}{\pspicture(-0.1,-0.1)(1.1,1.1)
\rput(0.5,0.5){\psline[linewidth=0.5pt](0,-0.5)(0,0.5)\psline[linewidth=0.5pt](-0.5,0)(0.5,0)}
\rput(0.5,0.5){$\ss\bullet$}
\psdots[dotstyle=triangle*,dotscale=1,dotangle=180](0.5,0.8)\psdots[dotstyle=triangle*,dotscale=1,dotangle=180](0.5,0.2)
\psdots[dotstyle=triangle*,dotscale=1,dotangle=-90](0.8,0.5)\psdots[dotstyle=triangle*,dotscale=1,dotangle=-90](0.2,0.5)
\endpspicture}}
\newcommand{\Vv}{\raisebox{-3.2mm}{\pspicture(-0.1,-0.1)(1.1,1.1)
\rput(0.5,0.5){\psline[linewidth=0.5pt](0,-0.5)(0,0.5)\psline[linewidth=0.5pt](-0.5,0)(0.5,0)}
\rput(0.5,0.5){$\ss\bullet$}
\psdots[dotstyle=triangle*,dotscale=1](0.5,0.8)\psdots[dotstyle=triangle*,dotscale=1,dotangle=180](0.5,0.2)
\psdots[dotstyle=triangle*,dotscale=1,dotangle=-90](0.2,0.5)\psdots[dotstyle=triangle*,dotscale=1,dotangle=90](0.8,0.5)
\endpspicture}}
\newcommand{\Vvi}{\raisebox{-3.2mm}{\pspicture(-0.1,-0.1)(1.1,1.1)
\rput(0.5,0.5){\psline[linewidth=0.5pt](0,-0.5)(0,0.5)\psline[linewidth=0.5pt](-0.5,0)(0.5,0)}
\rput(0.5,0.5){$\ss\bullet$}
\psdots[dotstyle=triangle*,dotscale=1,dotangle=180](0.5,0.8)\psdots[dotstyle=triangle*,dotscale=1](0.5,0.2)
\psdots[dotstyle=triangle*,dotscale=1,dotangle=90](0.2,0.5)\psdots[dotstyle=triangle*,dotscale=1,dotangle=-90](0.8,0.5)
\endpspicture}}
\newcommand{\Li}{\raisebox{0.3mm}{\pspicture(0.4,0.4)(1.1,1.1)
\rput(0.5,0.5){\psline[linewidth=0.5pt](0,0)(0,0.5)\psline[linewidth=0.5pt](0,0)(0.5,0)}
\rput(0.5,0.5){$\ss\bullet$}\psdots[dotstyle=triangle*,dotscale=1](0.5,0.8)
\psdots[dotstyle=triangle*,dotscale=1,dotangle=-90](0.8,0.5)
\endpspicture}}
\newcommand{\Lii}{\raisebox{0.3mm}{\pspicture(0.4,0.4)(1.1,1.1)
\rput(0.5,0.5){\psline[linewidth=0.5pt](0,0)(0,0.5)\psline[linewidth=0.5pt](0,0)(0.5,0)}
\rput(0.5,0.5){$\ss\bullet$}\psdots[dotstyle=triangle*,dotscale=1,dotangle=180](0.5,0.8)
\psdots[dotstyle=triangle*,dotscale=1,dotangle=90](0.8,0.5)
\endpspicture}}
\newcommand{\Liii}{\raisebox{0.3mm}{\pspicture(0.4,0.4)(1.1,1.1)
\rput(0.5,0.5){\psline[linewidth=0.5pt](0.5,0)(0,0)(0,0.5)}
\rput(0.5,0.5){$\ss\bullet$}\psdots[dotstyle=triangle*,dotscale=1](0.5,0.8)
\psdots[dotstyle=triangle*,dotscale=1,dotangle=90](0.8,0.5)
\endpspicture}}
\newcommand{\Liv}{\raisebox{0.3mm}{\pspicture(0.4,0.4)(1.1,1.1)
\rput(0.5,0.5){\psline[linewidth=0.5pt](0,0)(0,0.5)\psline[linewidth=0.5pt](0,0)(0.5,0)}
\rput(0.5,0.5){$\ss\bullet$}\psdots[dotstyle=triangle*,dotscale=1,dotangle=180](0.5,0.8)
\psdots[dotstyle=triangle*,dotscale=1,dotangle=-90](0.8,0.5)
\endpspicture}}
\newcommand{\Ri}{\raisebox{0.3mm}{\pspicture(-0.1,0.4)(0.6,1.1)
\rput(0.5,0.5){\psline[linewidth=0.5pt](-0.5,0)(0,0)(0,0.5)}
\rput(0.5,0.5){$\ss\bullet$}
\psdots[dotstyle=triangle*,dotscale=1](0.5,0.8)
\psdots[dotstyle=triangle*,dotscale=1,dotangle=90](0.2,0.5)
\endpspicture}}
\newcommand{\Rii}{\raisebox{0.3mm}{\pspicture(-0.1,0.4)(0.6,1.1)
\rput(0.5,0.5){\psline[linewidth=0.5pt](-0.5,0)(0,0)(0,0.5)}
\rput(0.5,0.5){$\ss\bullet$}
\psdots[dotstyle=triangle*,dotscale=1,dotangle=180](0.5,0.8)
\psdots[dotstyle=triangle*,dotscale=1,dotangle=-90](0.2,0.5)
\endpspicture}}
\newcommand{\Riii}{\raisebox{0.3mm}{\pspicture(-0.1,0.4)(0.6,1.1)
\rput(0.5,0.5){\psline[linewidth=0.5pt](-0.5,0)(0,0)(0,0.5)}
\rput(0.5,0.5){$\ss\bullet$}
\psdots[dotstyle=triangle*,dotscale=1](0.5,0.8)
\psdots[dotstyle=triangle*,dotscale=1,dotangle=-90](0.2,0.5)
\endpspicture}}
\newcommand{\Riv}{\raisebox{0.3mm}{\pspicture(-0.1,0.4)(0.6,1.1)
\rput(0.5,0.5){\psline[linewidth=0.5pt](-0.5,0)(0,0)(0,0.5)}
\rput(0.5,0.5){$\ss\bullet$}
\psdots[dotstyle=triangle*,dotscale=1,dotangle=180](0.5,0.8)
\psdots[dotstyle=triangle*,dotscale=1,dotangle=90](0.2,0.5)
\endpspicture}}
\newcommand{\T}{\raisebox{-0.5mm}{\pspicture(0.4,-0.1)(0.6,0.6)
\rput(0.5,0.5){\psline[linewidth=0.5pt](0,0)(0,-0.5)}
\rput(0.5,0.5){$\ss\bullet$}
\psdots[dotstyle=triangle*,dotscale=1](0.5,0.25)
\endpspicture}}
\newcommand{\Bi}{\raisebox{0.3mm}{\pspicture(0.4,0.4)(0.6,1.1)
\rput(0.5,0.5){\psline[linewidth=0.5pt](0,0)(0,0.5)}
\rput(0.5,0.5){$\ss\bullet$}
\psdots[dotstyle=triangle*,dotscale=1](0.5,0.8)
\endpspicture}}
\newcommand{\Bii}{\raisebox{0.3mm}{\pspicture(0.4,0.4)(0.6,1.1)
\rput(0.5,0.5){\psline[linewidth=0.5pt](0,0)(0,0.5)}
\rput(0.5,0.5){$\ss\bullet$}
\psdots[dotstyle=triangle*,dotscale=1,dotangle=180](0.5,0.8)
\endpspicture}}
\renewcommand{\u}{\bar{u}}
\newcommand{\q}{\bar{q}}
\newcommand{\ui}{{\color{blue}u_1}}
\newcommand{\uii}{{\color{green}u_2}}
\newcommand{\uiii}{{\color{red}u_3}}
\newcommand{\uiv}{{\color{brwn}u_4}}
\newcommand{\ubi}{\bar{\color{blue}u}{\color{blue}\mbox{}_1}}
\newcommand{\ubii}{\bar{\color{green}u}{\color{green}\mbox{}_2}}
\author[R.~E.~Behrend]{Roger E.~Behrend}
\address{R.~E.~Behrend, School of Mathematics, Cardiff University, Cardiff, CF24 4AG, UK}
\email{behrendr@cardiff.ac.uk}
\author[I.~Fischer]{Ilse Fischer}
\address{I.~Fischer, Fakult\"{a}t f\"{u}r Mathematik, Universit\"{a}t Wien, Oskar-Morgenstern-Platz 1, 1090 Wien, Austria}
\email{ilse.fischer@univie.ac.at}
\author[M.~Konvalinka]{Matja\v{z} Konvalinka}
\address{M.~Konvalinka, Fakulteta za matematiko in fiziko, Univerza v Ljubljani in In\v{s}titut za matematiko, 
fiziko in mehaniko, Jadranska 19, Ljubljana, Slovenia}
\email{matjaz.konvalinka@fmf.uni-lj.si}
\title[DASASMs of odd order]{Diagonally and antidiagonally symmetric\\
alternating sign matrices of odd order}
\keywords{Alternating sign matrices, six-vertex model}
\subjclass[2010]{05A05, 05A15, 05A19, 15B35, 82B20, 82B23}
\begin{document}
\begin{abstract}
We study the enumeration of diagonally and antidiagonally symmetric alternating sign matrices (DASASMs) of fixed odd order
by introducing a case of the six-vertex model whose configurations are in bijection with such matrices.
The model involves a grid graph on a triangle, with
bulk and boundary weights which satisfy the Yang--Baxter and reflection equations.
We obtain a general expression for the partition function of this model as a sum of two determinantal terms,
and show that at a certain point each of these terms reduces to a Schur function.
We are then able
to prove a conjecture of Robbins from the mid 1980's that the total number of $(2n+1)\times(2n+1)$ DASASMs is
$\prod_{i=0}^n\frac{(3i)!}{(n+i)!}$, and a conjecture of Stroganov from 2008 that the
ratio between the numbers of $(2n+1)\times(2n+1)$ DASASMs with central entry~$-1$ and~$1$ is $n/(n+1)$.
Among the several product formulae for the enumeration of symmetric alternating sign
matrices which were conjectured in the 1980's, that for odd-order DASASMs is the last to
have been proved.
\end{abstract}
\maketitle

\section{Preliminaries}
\subsection{Introduction}\label{intro}
An alternating sign matrix (ASM) is a square matrix in which each entry is~$0$, $1$ or~$-1$, and along each row and column
the nonzero entries alternate in sign and have a sum of~$1$.  These matrices were introduced by Mills, Robbins and Rumsey in the early 1980s,
accompanied by various conjectures concerning their enumeration~\cite[Conjs.~1 \&~2]{MilRobRum82},~\cite[Conjs.~1--7]{MilRobRum83}.
Shortly after this, as discussed by Robbins~\cite[p.~18]{Rob91},~\cite[p.~2]{Rob00},
Richard Stanley made the important suggestion of systematically studying the enumeration
of ASMs invariant under the action of subgroups of the symmetry group of a square.
This suggestion led to numerous conjectures for the straight and weighted enumeration of such symmetric ASMs, with these conjectures being
summarized by Robbins in a preprint written in the mid 1980s, and placed on the arXiv in~2000~\cite{Rob00}.
Much of the content of this preprint also appeared in review papers in 1986 and~1991 by Stanley~\cite{Sta86b} and Robbins~\cite{Rob91},
and in 1999 in a book by Bressoud~\cite[pp.~201--202]{Bre99}.
In the preprint, simple product formulae (or, more specifically, recursion relations which lead to such formulae)
were conjectured for the straight enumeration of several symmetry classes of ASMs,
and it was suggested that no such product formulae exist for the other nonempty classes.  
All except one of these conjectured product formulae had been proved by 2006. (See Section~\ref{symmclass} for further details.)
The single remaining case was that the number of $(2n+1)\times(2n+1)$ diagonally and antidiagonally symmetric ASMs (DASASMs) is
$\prod_{i=0}^n\frac{(3i)!}{(n+i)!}$, and a primary purpose of this paper is to provide the first proof of this formula.
In so doing, a further open conjecture of Stroganov~\cite[Conj.~2]{Str08},
that the ratio between the numbers of $(2n+1)\times(2n+1)$ DASASMs with central entry~$-1$ and~$1$ is $n/(n+1)$, will also be proved.

These results will be proved using a method involving the statistical mechanical six-vertex model.
The structure of the proofs can be summarized as follows.
First, a new case of the six-vertex model is introduced in Sections~\ref{sixvertexmodel}--\ref{weightspartfunct}.
The configurations of this case of the model are
in bijection with DASASMs of fixed odd order, and consist of certain orientations of the edges of a grid graph on a
triangle,~\eqref{Gn}.  The associated partition function~\eqref{Z} consists of a sum, over all such configurations, of products of certain
parameterized bulk and boundary weights, as given in Table~\ref{weights}, with these weights satisfying
the Yang--Baxter and reflection equations,~\eqref{YBE} and~\eqref{RE}.
Straight sums over all configurations, or over all configurations which correspond to DASASMs
with a fixed central entry, are obtained for certain specializations,~\eqref{ZASMenum},~\eqref{ZASMenumPM} and~\eqref{ZPM}, of 
the parameters in the partition function.
By identifying particular properties which uniquely characterize the partition function (including symmetry
with respect to certain parameters, and reduction to a lower order partition function at certain values of some of the parameters),
it is shown in Section~\ref{DASASMFulldetThPr} that the partition function can be expressed as a sum of two determinantal terms,
as given in Theorem~\ref{DASASMFulldetTh}.
It is also shown, using a general determinantal identity~\eqref{OkadaId}, that at a certain point, each
of these terms reduces, up to simple factors, to a Schur function,
as given in Theorem~\ref{ZDASASMFullSchurTh}.
Finally, the main results for the enumeration of odd-order DASASMs, Corollaries~\ref{numDASASMth} and~\ref{StroganovTh},
are obtained by appropriately specializing the variables in the Schur functions, and using
standard results for these functions and for numbers of semistandard Young tableaux.

The proofs given in this paper of enumeration results for odd-order DASASMs share several features with known proofs of enumeration formulae
for other symmetry classes of ASMs, such as those of 
Kuperberg~\cite{Kup96,Kup02}, Okada~\cite{Oka06}, and Razumov and Stroganov~\cite{RazStr06b,RazStr06a}.
However, the proofs of this paper also contain various new and distinguishing characteristics,
which will be outlined in Section~\ref{symmclassproof}.

\subsection{Symmetry classes of ASMs}\label{symmclass}
Several enumerative aspects of standard symmetry classes, and some
related classes, of ASMs will now be discussed in more detail.  

The symmetry group of a square is the dihedral group $D_4=
\{\mathcal{I},\mathcal{V},\mathcal{H},\mathcal{D},\mathcal{A},\mathcal{R}_{\pi/2},\mathcal{R}_{\pi},\mathcal{R}_{-\pi/2}\}$,
where $\mathcal{I}$ is the identity, $\mathcal{V}$, $\mathcal{H}$, $\mathcal{D}$ and $\mathcal{A}$ are
reflections in vertical, horizontal, diagonal and antidiagonal axes, respectively, and $\mathcal{R}_\theta$
is counterclockwise rotation by~$\theta$.  The group has a natural action on the set $\ASM(n)$ of $n\times n$ ASMs, in which
$(\mathcal{I}A)_{ij}=A_{ij}$, $(\mathcal{V}A)_{ij}=A_{i,n+1-j}$, $(\mathcal{H}\,A)_{ij}=A_{n+1-i,j}$, $(\mathcal{D}A)_{ij}=A_{ji}$,
$(\mathcal{A}\,A)_{ij}=A_{n+1-j,n+1-i}$, $(\mathcal{R}_{\pi/2}\,A)_{ij}=A_{j,n+1-i}$, $(\mathcal{R}_{\pi}\,A)_{ij}=A_{n+1-i,n+1-j}$
and $(\mathcal{R}_{-\pi/2}\,A)_{ij}=A_{n+1-j,i}$, for each $A\in\ASM(n)$.
The group has ten subgroups: $\{\mathcal{I}\}$, $\{\mathcal{I},\mathcal{V}\}\approx\{\mathcal{I},\mathcal{H}\}$,
$\{\mathcal{I},\mathcal{V},\mathcal{H},\mathcal{R}_{\pi}\}$, $\{\mathcal{I},\mathcal{D}\}\approx\{\mathcal{I},\mathcal{A}\}$,
$\{\mathcal{I},\mathcal{D},\mathcal{A},\mathcal{R}_{\pi}\}$, $\{\mathcal{I},\mathcal{R}_{\pi}\}$,
$\{\mathcal{I},\mathcal{R}_{\pi/2},\mathcal{R}_{\pi},\mathcal{R}_{-\pi/2}\}$
and~$D_4$, where $\approx$ denotes conjugacy.  In studying the enumeration of symmetric ASMs, the primary task is to obtain formulae
for the cardinalities of each set $\ASM(n,H)$ of $n\times n$ ASMs invariant under the action of subgroup $H$.  Since this cardinality
is the same for conjugate subgroups,
there are eight inequivalent classes.  The standard choices and names for these classes, together with information
about empty subclasses, conjectures and proofs of straight enumeration formulae, and numerical data are as follows.
\begin{list}{$\bullet$}{\setlength{\topsep}{0.8mm}\setlength{\labelwidth}{2mm}\setlength{\leftmargin}{6mm}}
\item $\ASM(n)=\ASM(n,\{\mathcal{I}\})$. Unrestricted ASMs. The formula $|\ASM(n)|=\prod_{i=0}^{n-1}\frac{(3i+1)!}{(n+i)!}$
was conjectured by Mills, Robbins and Rumsey~\cite[Conj.~1]{MilRobRum82}, and
first proved by Zeilberger~\cite[p.~5]{Zei96a}, with further proofs, involving different methods,
subsequently being obtained by Kuperberg~\cite{Kup96} and Fischer~\cite{Fis07}.
\item $\ASM(n,\{\mathcal{I},\mathcal{V}\})$. Vertically symmetric ASMs (VSASMs).
For~$n$ even, the set is empty. For~$n$ odd, a formula was conjectured by Robbins~\cite[Sec.~4.2]{Rob00}, and proved by Kuperberg~\cite[Thm.~2]{Kup02}.
\item $\ASM(n,\{\mathcal{I},\mathcal{V},\mathcal{H},\mathcal{R}_{\pi}\})$. Vertically and horizontally symmetric ASMs (VHSASMs).
For~$n$ even, the set is empty. For~$n$ odd, formulae were conjectured by Mills~\cite[Sec.~4.2]{Rob00}, and proved by Okada~\cite[Thm.~1.2 (A5) \&~(A6)]{Oka06}.
\item $\ASM(n,\{\mathcal{I},\mathcal{R}_{\pi}\})$. Half-turn symmetric ASMs (HTSASMs).
Formulae were conjectured by Mills, Robbins and Rumsey~\cite[p.~285]{MilRobRum86}, and
proved for~$n$ even by Kuperberg~\cite[Thm.~2]{Kup02}, and~$n$ odd by Razumov and Stroganov~\cite[p.~1197]{RazStr06a}.
\item $\ASM(n,\{\mathcal{I},\mathcal{R}_{\pi/2},\mathcal{R}_{\pi},\mathcal{R}_{-\pi/2}\})$. Quarter-turn symmetric ASMs (QTSASMs).
For $n\equiv2\bmod4$, the set is empty.  For $n\not\equiv2\bmod{4}$, formulae were conjectured by Robbins~\cite[Sec.~4.2]{Rob00},
and proved for $n\equiv0\bmod{4}$ by Kuperberg~\cite[Thm.~2]{Kup02}, and~$n$ odd by Razumov and Stroganov~\cite[p.~1649]{RazStr06b}.
\item $\ASM(n,\{\mathcal{I},\mathcal{D}\})$. Diagonally symmetric ASMs (DSASMs).
No formula is currently known or conjectured. Data for $n\le20$ is given by Bousquet-M{\'e}lou and Habsieger~\cite[Tab.~1]{BouHab93}.
\item $\ASM(n,\{\mathcal{I},\mathcal{D},\mathcal{A},\mathcal{R}_{\pi}\})$. Diagonally and antidiagonally symmetric ASMs (DASASMs).
For~$n$ even, no formula is currently known or conjectured. Data for $n\le24$
is given by Bousquet-M{\'e}lou and Habsieger~\cite[Tab.~1]{BouHab93}.
For~$n$ odd, a formula was conjectured by Robbins~\cite[Sec.~4.2]{Rob00}, and is proved in this paper.
\item $\ASM(n,D_4)$. Totally symmetric ASMs (TSASMs). For~$n$ even, the set is empty. For~$n$ odd,
no formula is currently known or conjectured. Data for $n\le27$ is given by Bousquet-M{\'e}lou and Habsieger~\cite[Tab.~1]{BouHab93}.
\end{list}

Alternative approaches to certain parts of some of the proofs cited in this list are also known.
For example, such alternatives have been obtained for
unrestricted ASMs by Colomo and Pronko~\cite[Sec.~5.3]{ColPro05a},~\cite[Sec.~4.2]{ColPro06}, Okada~\cite[Thm.~2.4(1)]{Oka06},
Razumov and Stroganov~\cite[Sec.~2]{RazStr04b},~\cite[Sec.~2]{RazStr09}, and Stroganov~\cite[Sec.~4]{Str06};
VSASMs by Okada~\cite[Thm.~2.4(3)]{Oka06}, and Razumov and Stroganov~\cite[Sec.~3]{RazStr04b};
even-order HTSASMs by Okada~\cite[Thm.~2.4(2)]{Oka06}, Razumov and Stroganov~\cite[Eq.~(31)]{RazStr06a}, and Stroganov~\cite[Eq.~(11)]{Str04};
and QTSASMs of order $0\bmod{4}$ by Okada~\cite[Thm~2.5(1)]{Oka06}.

In addition to the eight standard symmetry classes of ASMs, various closely related classes have also been studied.  A few examples are as follows.
\begin{list}{$\bullet$}{\setlength{\topsep}{0.8mm}\setlength{\labelwidth}{2mm}\setlength{\leftmargin}{6mm}}
\item Quasi quarter-turn symmetric ASMs (qQTSASMs). These are $(4n+2)\times(4n+2)$ ASMs $A$ for which the four central entries
$A_{2n+1,2n+1}$, $A_{2n+1,2n+2}$, $A_{2n+2,2n+1}$ and $A_{2n+2,2n+2}$ are
either $1$, $0$, $0$ and $1$ respectively, or 0, $-1$, $-1$ and 0 respectively,
while the remaining entries satisfy invariance under quarter-turn rotation, i.e., $A_{ij}=A_{j,4n+3-i}$
for all other $i,j$. They were introduced, and a product formula for their enumeration
was obtained, by Aval and Duchon~\cite{AvaDuc09,AvaDuc10}.
\item Off-diagonally symmetric ASMs (OSASMs). These are $2n\times2n$ DSASMs in which each entry on the diagonal is~$0$.
They were introduced, and a product formula for their straight enumeration (which is identical to that for $(2n+1)\times(2n+1)$ VSASMs) was obtained,
by Kuperberg~\cite[Thm.~5]{Kup02}.
\item Off-diagonally and off-antidiagonally symmetric ASMs (OOSASMs). These are $4n\times4n$ DASASMs
in which each entry on the diagonal and antidiagonal is~$0$.  They were introduced, and certain results were obtained,
by Kuperberg~\cite{Kup02}, although no simple formula for their straight enumeration is currently known or conjectured.
\end{list}

In addition to results for the straight enumeration of all elements of standard or related symmetry classes of ASMs, 
various other enumeration results and conjectures are known for certain classes, some examples being as follows.
\begin{list}{$\bullet$}{\setlength{\topsep}{0.8mm}\setlength{\labelwidth}{2mm}\setlength{\leftmargin}{6mm}}
\item Certain results and conjectures are known for the refined or
weighted enumeration of classes of ASMs with respect to statistics from among the following: the positions of $1$'s in or near the outer rows
or columns of an ASM; the number of~$-1$'s in an ASM or part of an ASM; and the number of so-called inversions in an ASM.
For the case of unrestricted ASMs see, for example, Ayyer and Romik~\cite{AyyRom13}, Behrend~\cite{Beh13}, and references therein.
For cases involving certain other classes of ASMs see, for example, Cantini~\cite{Can13}, de Gier, Pyatov and Zinn-Justin~\cite{DegPyaZin09}, 
Fischer and Riegler~\cite{FisRie15}, Hagendorf and Morin-Duchesne~\cite{HagMor16},
Kuperberg~\cite{Kup02}, Okada~\cite{Oka06},
Razumov and Stroganov~\cite{RazStr04b,RazStr06a}, Robbins~\cite{Rob00}, and Stroganov~\cite{Str04}.
\item In addition to the formula~\eqref{StroganovEq} proved in this paper for
the ratio between the numbers of odd-order DASASMs with central entry~$-1$ and~$1$,
analogous formulae have been proved for HTSASMs by Razumov and Stroganov~\cite[Sec.~5.2]{RazStr06a}
and for qQTSASMs by Aval and Duchon~\cite[Sec.~5]{AvaDuc10}, and have been conjectured for odd-order QTSASMs by Stroganov~\cite[Conjs.~1a \& 1b]{Str08}.
\item Various results and conjectures are known for the refined enumeration of several classes of ASMs
with respect to so-called link patterns of associated fully packed loop configurations.
For further information, see for example Cantini and Sportiello~\cite{CanSpo11,CanSpo14}, and de Gier~\cite{Deg09}.
\item Various connections between the straight or refined enumeration of classes of ASMs
and classes of certain plane partitions have been proved or conjectured.
For further information see, for example, Behrend~\cite[Secs.~3.12, 3.13 \&~3.15]{Beh13} and references therein.
\item Relationships between the partition functions of cases of the six-vertex model associated with classes of ASMs
and certain refined Cauchy and Littlewood identities have been obtained by Betea and Wheeler~\cite{BetWhe16},
Betea, Wheeler and Zinn-Justin~\cite{BetWheZin15}, and Wheeler and Zinn-Justin~\cite{WheZin16}.
\end{list}

\subsection{Proofs of enumeration results for symmetry classes of ASMs}\label{symmclassproof}
Most of the proofs mentioned in Section~\ref{symmclass} use a method which directly involves
the statistical mechanical six-vertex model. (A few exceptions are proofs of Cantini and Sportiello~\cite{CanSpo11,CanSpo14},
Fischer~\cite{Fis07}, and Zeilberger~\cite{Zei96a}.)

Although there are several variations on this method, the general features are often as follows.
First, a case of the six-vertex model on a particular graph with certain boundary conditions is introduced, for which the configurations
are in bijection with the ASMs under consideration. The associated partition function is then a sum, over all such configurations,
of products of parameterized bulk weights, and possibly also boundary weights.  By using certain local relations satisfied by these weights,
such as the Yang--Baxter and reflection equations, properties which uniquely determine the partition function are identified.
These properties are then used to show that the partition function can be expressed in terms of one or more determinants or Pfaffians.
Finally, enumeration formulae are obtained by suitably specializing the parameters in the partition function,
and applying certain results for the transformation or evaluation of determinants or Pfaffians.

This general method was introduced in the proofs of Kuperberg~\cite{Kup96,Kup02}, with certain steps in the unrestricted ASM and VSASM cases
being based on previously-known results.
For example, a bijection between $\ASM(n)$ and configurations of the six-vertex model on an $n\times n$ square grid with
domain-wall boundary conditions had been
observed by
Elkies, Kuperberg, Larsen and Propp~\cite[Sec.~7]{ElkKupLarPro92b}, and (using different terminology) by Robbins and Rumsey~\cite[pp.~179--180]{RobRum86}, 
properties which uniquely determine the partition function for $\ASM(n)$
had been identified by Korepin~\cite{Kor82}, a determinantal expression for the partition function for $\ASM(n)$
had been obtained by Izergin~\cite[Eq.~(5)]{Ize87}, and a determinantal expression closely related to the partition function for
odd-order VSASMs had been obtained by Tsuchiya~\cite{Tsu98}.

Following the pioneering proofs of Kuperberg~\cite{Kup96,Kup02}, a further important development was the observation, made by
Okada~\cite{Oka06}, Razumov and Stroganov~\cite{RazStr04b,RazStr06a} and Stroganov~\cite{Str04,Str06}, that for certain
combinatorially-relevant values of a particular parameter, the partition function can often be written in terms of determinants
which are associated with characters of irreducible representations of classical groups.  Expressing the partition function in this
way, and using known product formulae for the dimensions of such representations, can then lead to
simplifications in the proofs of enumeration results.

The approach used in this paper to prove enumeration formulae for odd-order DASASMs has already been
summarized in Section~\ref{intro}, and largely follows the general method outlined in the current section.
However, some specific comparisons between odd-order DASASMs and other ASM classes which have been studied using
this method, are as follows.
\begin{list}{$\bullet$}{\setlength{\topsep}{0.8mm}\setlength{\labelwidth}{2mm}\setlength{\leftmargin}{6mm}}
\item As will be discussed in Section~\ref{weightspartfunct}, six-vertex model boundary weights depend on four possible local configurations at a degree-2
boundary vertex. Boundary weights were previously used by Kuperberg~\cite[Fig.~15]{Kup02} for classes
including VSASMs, VHSASMs, OSASMs and OOSASMs, and in each of these cases, the boundary weights
were identically zero for two of the four local configurations.  However, the boundary weights
used for odd-order DASASMs differ from those used previously for other ASM classes, and are not identically zero
for any of the four local configurations.  As will be discussed in Section~\ref{LocRel}, the boundary weights used by Kuperberg~\cite[Fig.~15]{Kup02}
and those used in this paper constitute various special cases of the most general boundary weights which satisfy the reflection equation
for the six-vertex model.
\item The natural decomposition of the partition function into a sum of two terms has previously only been observed for one case,
that of odd-order HTSASMs, for which a decomposition in which each term involves a product of two determinants, of matrices whose
sizes differ by~1, was obtained
by Razumov and Stroganov~\cite[Thm.~1]{RazStr06a}.  Odd-order DASASMs now provide a further example in which
the partition function is expressed, in Theorem~\ref{DASASMFulldetTh}, as a sum of two terms,
but in this case each term involves only a single determinant.  A further difference is that the matrices for 
odd-order HTSASMs have a uniform structure in all
rows and columns, whereas the matrices for odd-order DASASMs have a special structure in the last row, and a uniform structure 
elsewhere. For odd-order HTSASMs, the decomposition of the partition function into two terms is closely related to the behavior of
parameters associated with the central row and column of the HTSASMs,
and similarly, for odd-order DASASMs, it is related to the behavior of a parameter
associated with the central column of the DASASMs.  However, when this parameter is set to~1, the odd-order DASASM partition function
reduces to a single determinantal term, as given in Corollary~\ref{DASASMdetCoroll}.  
\item The previously-studied ASM classes can be broadly divided into the following
types: (i) those (including unrestricted ASMs, VSASMs, VHSASMs and HTSASMs) in which one set of parameters is associated with
the horizontal edges of a grid graph, and another set of parameters is associated with the vertical edges;
(ii) those (including QTSASMs, qQTSASMs, OSASMs and OOSASMs) in which a single set of parameters is associated with both horizontal
and vertical edges of a grid graph.  For the classes of type (i), the partition function is naturally expressed in terms of determinants
of matrices whose rows are associated with the horizontal parameters, and columns are associated with the vertical parameters.
For the classes of type (ii), the partition function is naturally expressed in terms of Pfaffians.
For odd-order DASASMs, there is a single set of parameters
associated with both horizontal and vertical edges of a grid graph, and hence this class can be regarded as belonging to type (ii).  
However, in contrast to the
previously-studied classes of this type, the partition function is naturally expressed in terms of determinants.
\end{list}

\subsection{DASASMs}\label{DASASMs}
The set $\ASM(n,\{\mathcal{I},\mathcal{D},\mathcal{A},\mathcal{R}_{\pi}\})$ of all $n\times n$ DASASMs will be denoted
in the rest of this paper as $\DASASM(n)$. Hence,
\begin{equation}\label{DASASM}\DASASM(n)=\{A\in\ASM(n)\mid A_{ij}=A_{ji}=A_{n+1-j,n+1-i},\text{ for all }1\le i,j\le n\}.\end{equation}
Note that each $A\in\DASASM(n)$ also satisfies $A_{ij}=A_{n+1-i,n+1-j}$ for all $1\le i,j\le n$, i.e., $A$ is half-turn symmetric.

For example,
\begin{equation}\label{DASASM3}
\DASASM(3)=\left\{
\begin{pmatrix}1&0&0\\0&1&0\\0&0&1\end{pmatrix},\,
\begin{pmatrix}0&1&0\\1&-1&1\\0&1&0\end{pmatrix},\,
\begin{pmatrix}0&0&1\\0&1&0\\1&0&0\end{pmatrix}\right\},\end{equation}
and an element of $\DASASM(7)$ is
\begin{equation}\label{DASASMexample}
\begin{pmatrix}0&0&1&0&0&0&0\\0&1&-1&0&1&0&0\\1&-1&0&1&-1&1&0\\0&0&1&-1&1&0&0\\0&1&-1&1&0&-1&1\\0&0&1&0&-1&1&0\\0&0&0&0&1&0&0
\end{pmatrix}.\end{equation}

Consider any $A\in\DASASM(2n+1)$. 
The central column (or row) of $A$ is invariant under reversal of the order of its entries, and
its nonzero entries alternate in sign and have a sum of~$1$.
Therefore, the central entry $A_{n+1,n+1}$ is nonzero (since either $A_{n+1,n+1}=1$ and
all other entries of the central column are~0, or else the two nearest nonzero entries to $A_{n+1,n+1}$
in the central column are identical), and it is~$1$ or~$-1$
according to whether the number of nonzero entries among the first $n$ entries of the central column
is even or odd, respectively (since the first nonzero entry in the central column is a~$1$).

These observations can be summarized as
\begin{equation}\label{N0}A_{n+1,n+1}=(-1)^{n+N(A)},\quad\text{for each }A\in\DASASM(2n+1),\end{equation}
where $N(A)$ is the number of~$0$'s among the first $n$ entries of the central column of $A$.

The sets of all $(2n+1)\times(2n+1)$ DASASMs with fixed central entry~$1$ and $-1$ will be denoted
as $\DASASM_+(2n+1)$ and $\DASASM_-(2n+1)$, respectively, i.e.,
\begin{equation}\DASASM_\pm(2n+1)=\{A\in\DASASM(2n+1)\mid A_{n+1,n+1}=\pm1\}.\end{equation}

The rest of this paper will be primarily focused on obtaining results which lead to formulae (given in
Corollaries~\ref{numDASASMth} and~\ref{StroganovTh}) for $|\DASASM(2n+1)|$ and
$|\DASASM_\pm(2n+1)|$.  For reference, these cardinalities for $n=0,\ldots,7$ are given in Table~\ref{card}.

\begin{table}[h]\centering
$\begin{array}{|c@{\;\;\;}|@{\;\;\;}c@{\;\;\;\;}c@{\;\;\;\;}c@{\;\;\;\;}c@{\;\;\;\;}c@{\;\;\;\;}c@{\;\;\;\;}c@{\;\;\;\;}c@{\;\;\;}|}\hline\rule{0ex}{4.7mm}
n&0&1&2&3&4&5&6&7\\[1mm]\hline
\rule{0ex}{5.9mm}|\DASASM(2n+1)|&1&3&15&126&1782&42471&1706562&115640460\\[1.8mm]
|\DASASM_+(2n+1)|&1&2&9&72&990&23166&918918&61674912\\[1.8mm]
|\DASASM_-(2n+1)|&0&1&6&54&792&19305&787644&53965548\\[1.8mm]\hline
\end{array}$\\[2.4mm]\caption{Numbers of odd-order DASASMs.}\label{card}
\end{table}

\subsection{Odd DASASM triangles}\label{OddDASASMtriangles}
Let an odd DASASM triangle $A$ of order~$n$ be a triangular array
\begin{equation}\begin{array}{ccc@{}c@{\;\:}c@{\,}c@{}ccc}
A_{11}&A_{12}&A_{13}&\;\:\ldots&A_{1,n+1}&\ldots&A_{1,2n-1}&A_{1,2n}&A_{1,2n+1}\\
&A_{22}&A_{23}&\;\:\ldots&A_{2,n+1}&\ldots&A_{2,2n-1}&A_{2,2n}\\
&&\!\!\ddots&&\vdots&&\!\!\!\!\udots\\[-0.8mm]
&&&A_{nn}&A_{n,n+1}&A_{n,n+2}\\
&&&&\!\!A_{n+1,n+1},\!\!\end{array}
\end{equation}
such that each entry is~$0$, $1$ or~$-1$ and, for each $i=1,\ldots,n+1$, the nonzero entries along the sequence
\begin{equation}
\label{path}
\begin{matrix}
A_{1i}&&&&A_{1,2n+2-i}\\
A_{2i}&&&&A_{2,2n+2-i}\\
\vdots&&&&\vdots\\
A_{i-1,i}&&&&A_{i-1,2n+2-i}\\
A_{ii}&A_{i,i+1}&\ldots&A_{i,2n+1-i}&A_{i,2n+2-i}
\end{matrix}
\end{equation}
alternate in sign and have a sum of~1, where the sequence is read downward from $A_{1i}$ to $A_{ii}$, then rightward
to $A_{i,2n+2-i}$, and then upward to $A_{1,2n+2-i}$ (and for $i=n+1$, the sequence is taken to be
$A_{1,n+1},\ldots,A_{n,n+1},A_{n+1,n+1},A_{n,n+1},\ldots,A_{1,n+1}$).

It can be seen that there is a bijection from $\DASASM(2n+1)$ to the set of odd DASASM triangles of order $n$,
in which the entries $A_{ij}$ of $A\in\DASASM(2n+1)$ are simply restricted to $i=1,\ldots,n+1$ and
$j=i,\ldots,2n+2-i$.

As examples, the set of odd DASASM triangles of order $1$ is
\begin{equation}\label{DASASMtri3}
\left\{\begin{matrix}1&0&0\\&1\end{matrix}\;,\begin{matrix}&0&1&0\\&&-1\end{matrix}\;,\begin{matrix}&0&0&1\\&&1\end{matrix}\right\}\end{equation}
(where the elements correspond, in order, to the DASASMs in~\eqref{DASASM3}), and the odd DASASM triangle
which corresponds to the DASASM in~\eqref{DASASMexample} is
\begin{equation}\label{DASASMtriangleexample}
\begin{matrix}0&0&1&0&0&0&0\\&1&-1&0&1&0\\&&0&1&-1\\&&&-1
\end{matrix}.\end{equation}

\subsection{Six-vertex model configurations}\label{sixvertexmodel}
Define a grid graph on a triangle as
\psset{unit=6mm}
\begin{equation}\label{Gn}\mathcal{T}_n=\raisebox{-26mm}{\pspicture(-0.2,-1)(12,6.7)
\rput(0.9,6.4){$\scriptscriptstyle(0,1)$}\rput(2.1,6.4){$\scriptscriptstyle(0,2)$}\rput(6,6.4){$\scriptscriptstyle(0,n+1)$}
\rput(9.9,6.4){$\scriptscriptstyle(0,2n)$}\rput(11.5,6.4){$\scriptscriptstyle(0,2n+1)$}
\rput(0.9,4.7){$\scriptscriptstyle(1,1)$}\rput(11.3,4.7){$\scriptscriptstyle(1,2n+1)$}
\rput(1.9,3.7){$\scriptscriptstyle(2,2)$}\rput(10.1,3.7){$\scriptscriptstyle(2,2n)$}
\rput(4.8,0.7){$\scriptscriptstyle(n,n)$}\rput(7.3,0.7){$\scriptscriptstyle(n,n+2)$}\rput(6,-0.3){$\scriptscriptstyle(n+1,n+1)$}
\rput(12,3.5){.}
\multirput(1,6)(1,0){11}{$\ss\bullet$}\multirput(1,5)(1,0){11}{$\ss\bullet$}\multirput(2,4)(1,0){9}{$\ss\bullet$}
\multirput(3,3)(1,0){7}{$\ss\bullet$}\multirput(4,2)(1,0){5}{$\ss\bullet$}\multirput(5,1)(1,0){3}{$\ss\bullet$}
\rput(6,0){$\ss\bullet$}
\psline[linewidth=0.5pt](1,6)(1,5)(11,5)(11,6)\psline[linewidth=0.5pt](2,6)(2,4)(10,4)(10,6)\psline[linewidth=0.5pt](3,6)(3,3)(9,3)(9,6)
\psline[linewidth=0.5pt](4,6)(4,2)(8,2)(8,6)\psline[linewidth=0.5pt](5,6)(5,1)(7,1)(7,6)\psline[linewidth=0.5pt](6,6)(6,0)
\endpspicture}\end{equation}
Note that~$\mathcal{T}_n$ can be regarded as an odd-order analog of the graph introduced by Kuperberg~\cite[Fig.~13]{Kup02}
for OOSASMs.

The vertices of~$\mathcal{T}_n$ consist of
top vertices $(0,j)$, $j=0,\ldots,2n+1$, of degree~1, left boundary vertices $(i,i)$, $i=1,\ldots,n$, of degree~2,
bulk vertices $(i,j)$, $i=1,\ldots,n$, $j=i+1,\ldots,2n+1-i$, of degree~4, right boundary vertices $(i,2n+2-i)$, $i=1,\ldots,n$,
of degree~2, and a bottom vertex $(n+1,n+1)$ of degree~1.  The edges incident with the top vertices will be referred to as top edges.

Now define a configuration of the six-vertex model on~$\mathcal{T}_n$ to be an orientation of the edges of~$\mathcal{T}_n$, such that each
top edge is directed upwards, and among the four edges incident to each
bulk vertex, two are directed into and two are directed out of the vertex, i.e., the so-called six-vertex rule is
satisfied.

\psset{unit=7mm}
For such a configuration $C$,
and a vertex $(i,j)$ of~$\mathcal{T}_n$, define the local configuration $C_{ij}$ at $(i,j)$ to be the orientation of the edges incident to $(i,j)$.
Hence, the possible local configurations are~\T\ at a top vertex,
\Li, \Lii, \Liii\ or \Liv\ at a left boundary vertex,
\Vi, \Vii, \Viii, \Viv, \Vv \ or \Vvi\ at a bulk vertex,
\Ri, \Rii, \Riii\ or \Riv\ at a right boundary vertex, and \Bi\ or \Bii\  at the bottom vertex.

There is a natural bijection from the set of configurations of the six-vertex model on~$\mathcal{T}_n$ to
the set of odd DASASM triangles of order~$n$, in which a configuration $C$ is mapped to an
odd DASASM triangle~$A$ given by
\begin{equation}\label{bij}A_{ij}=
\left\{\rule[-10mm]{0mm}{20mm}\right.\!\!
\begin{array}{@{}r@{\;\;\;\;}l@{}}1,&C_{ij}=\Vv, \Liii, \Riii\text{ or }\Bi,\\
-1,&C_{ij}=\Vvi, \Liv, \Riv\text{ or }\Bii,\\
0,&C_{ij}=\Vi, \Vii, \Viii, \Viv, \Li, \Lii, \Ri\text{ or }\Rii,\end{array}\end{equation}
for $i=1,\ldots,n+1$ and $j=i,\ldots,2n+2-i$.
Note that the (fixed) local configurations at the top vertices are not associated with entries of $A$.
Note also that the cases of~\eqref{bij} can be summarized as follows: $A_{ij}=1$ if $C_{ij}$ is \Vv or a restriction
of that (to the upper and right, upper and left, or upper edges), $A_{ij}=-1$ if $C_{ij}$ is \Vvi or a restriction
of that (again to the upper and right, upper and left, or upper edges), and $A_{ij}=0$ otherwise.

\psset{unit=6mm}
As examples, the set of configurations of the six-vertex model on $\mathcal{T}_1$ is
\begin{equation}\label{SV3}\left\{\raisebox{-6.3mm}{
\pspicture(0,-0.2)(2.7,2.2)\multirput(0,2)(1,0){3}{$\ss\bullet$}\multirput(0,1)(1,0){3}{$\ss\bullet$}\rput(1,0){$\ss\bullet$}
\psline[linewidth=0.5pt](0,2)(0,1)(2,1)(2,2)\psline[linewidth=0.5pt](1,0)(1,2)
\multirput(0,1)(1,0){3}{\psdots[dotstyle=triangle*,dotscale=1.1](0,0.5)}\multirput(0,1)(1,0){2}{\psdots[dotstyle=triangle*,dotscale=1.1,dotangle=90](0.5,0)}
\psdots[dotstyle=triangle*,dotscale=1.1](1,0.5)\rput(2.3,0.8){,}\endpspicture
\pspicture(-0.2,-0.2)(2.7,2.2)\multirput(0,2)(1,0){3}{$\ss\bullet$}\multirput(0,1)(1,0){3}{$\ss\bullet$}\rput(1,0){$\ss\bullet$}
\psline[linewidth=0.5pt](0,2)(0,1)(2,1)(2,2)\psline[linewidth=0.5pt](1,0)(1,2)
\multirput(0,1)(1,0){3}{\psdots[dotstyle=triangle*,dotscale=1.1](0,0.5)}
\psdots[dotstyle=triangle*,dotscale=1.1,dotangle=-90](0.5,1)\psdots[dotstyle=triangle*,dotscale=1.1,dotangle=90](1.5,1)
\psdots[dotstyle=triangle*,dotscale=1.1,dotangle=180](1,0.5)\rput(2.3,0.8){,}\endpspicture
\pspicture(-0.2,-0.2)(2.2,2.2)\multirput(0,2)(1,0){3}{$\ss\bullet$}\multirput(0,1)(1,0){3}{$\ss\bullet$}\rput(1,0){$\ss\bullet$}
\psline[linewidth=0.5pt](0,2)(0,1)(2,1)(2,2)\psline[linewidth=0.5pt](1,0)(1,2)
\multirput(0,1)(1,0){3}{\psdots[dotstyle=triangle*,dotscale=1.1](0,0.5)}\multirput(0,1)(1,0){2}{\psdots[dotstyle=triangle*,dotscale=1.1,dotangle=-90](0.5,0)}
\psdots[dotstyle=triangle*,dotscale=1.1](1,0.5)\endpspicture}\right\}\end{equation}
(where the elements correspond, in order, to the odd DASASM triangles in~\eqref{DASASMtri3}), and
the configuration which corresponds to the odd DASASM triangle in~\eqref{DASASMtriangleexample} is
\begin{equation}\label{configex}\raisebox{-12mm}{\pspicture(0.8,-0.2)(7.2,4.2)
\rput(7.6,2){.}
\multirput(1,4)(1,0){7}{$\ss\bullet$}\multirput(1,3)(1,0){7}{$\ss\bullet$}\multirput(2,2)(1,0){5}{$\ss\bullet$}
\multirput(3,1)(1,0){3}{$\ss\bullet$}\rput(4,0){$\ss\bullet$}
\psline[linewidth=0.5pt](1,4)(1,3)(7,3)(7,4)\psline[linewidth=0.5pt](2,4)(2,2)(6,2)(6,4)\psline[linewidth=0.5pt](3,4)(3,1)(5,1)(5,4)
\psline[linewidth=0.5pt](4,4)(4,0)
\multirput(1,3)(1,0){7}{\psdots[dotstyle=triangle*,dotscale=1.1](0,0.5)}
\psdots[dotstyle=triangle*,dotscale=1.1](2,2.5)
\psdots[dotstyle=triangle*,dotscale=1.1,dotangle=180](3,2.5)
\multirput(4,2)(1,0){3}{\psdots[dotstyle=triangle*,dotscale=1.1](0,0.5)}
\multirput(3,1)(1,0){2}{\psdots[dotstyle=triangle*,dotscale=1.1](0,0.5)}
\psdots[dotstyle=triangle*,dotscale=1.1,dotangle=180](5,1.5)
\psdots[dotstyle=triangle*,dotscale=1.1,dotangle=180](4,0.5)
\multirput(1,3)(1,0){2}{\psdots[dotstyle=triangle*,dotscale=1.1,dotangle=-90](0.5,0)}
\multirput(3,3)(1,0){4}{\psdots[dotstyle=triangle*,dotscale=1.1,dotangle=90](0.5,0)}
\multirput(2,2)(3,0){2}{\psdots[dotstyle=triangle*,dotscale=1.1,dotangle=90](0.5,0)}
\multirput(3,2)(1,0){2}{\psdots[dotstyle=triangle*,dotscale=1.1,dotangle=-90](0.5,0)}
\psdots[dotstyle=triangle*,dotscale=1.1,dotangle=-90](3.5,1)
\psdots[dotstyle=triangle*,dotscale=1.1,dotangle=90](4.5,1)
\endpspicture}\end{equation}

\psset{unit=7mm}
The fact that~\eqref{bij} is a well-defined bijection can be verified by considering the 
standard bijection between $\ASM(2n+1)$ and the set of configurations of the six-vertex model on a $(2n+1)\times(2n+1)$ square grid $\mathcal{S}_n$ 
with domain-wall boundary conditions, and then restricting from $\ASM(2n+1)$ to $\DASASM(2n+1)$. 
Some of the details of this bijection and the restriction are as follows.
The grid~$\mathcal{S}_n$, which contains the grid $\mathcal{T}_n$ as a subgraph, consists of bulk vertices $(i,j)$, of degree~4,
together with top vertices $(0,j)$, right vertices $(i,2n+2)$, bottom vertices $(2n+2,j)$ and left vertices $(i,0)$, all of degree~1, for $i,j=1,\ldots,2n+1$,
where $(i,j)$ appears in row $i$ and column $j$.
The configurations are orientations of the edges of $\mathcal{S}_n$, such that 
each edge incident to a top, right, bottom or left vertex is directed upwards, leftwards, downwards or rightwards, respectively,
and the six-vertex rule is satisfied at each bulk vertex. The ASM $A$ which corresponds to a configuration~$C$ is given by
$A_{ij}=1$ or $A_{ij}=-1$ if the local configuration of~$C$ at $(i,j)$ is $\Vv$ or $\Vvi$, respectively, and 
$A_{ij}=0$, otherwise.  If $A$ is a DASASM, then the symmetry conditions for~$A$ imply that~$C$ is uniquely determined by its restriction
to the edges of~$\mathcal{T}_n$.  Note also that in this case, within~$\mathcal{S}_n$,
only $\Vi$, $\Vii$, $\Vv$ and $\Vvi$ can occur at a vertex $(i,i)$ on the diagonal, 
only $\Viii$, $\Viv$, $\Vv$ and $\Vvi$ can occur at a vertex $(i,2n+2-i)$ on the antidiagonal, 
and hence only $\Vv$ or $\Vvi$ can occur at the central vertex $(n+1,n+1)$.

\subsection{Weights and the partition function}\label{weightspartfunct}
Throughout the rest of this paper, the notation
\begin{equation}\label{not}\bar{x}=x^{-1}\qquad\text{and}\qquad\sigma(x)=x-\bar{x}\end{equation}
will be used.

For each possible local configuration $c$ at a bulk or boundary vertex,
and for parameters $q$ and~$u$, assign
a weight $W(c,u)$, as given in Table~\ref{weights}
(where each boundary weight $W(c,u)$ appears in the same row as bulk weights 
whose local configurations restrict to $c$).
\begin{table}[h]\centering
$\begin{array}{|@{\;}l|@{\;}l|@{\;}l|}\hline\rule{0ex}{3.5ex}
\text{Bulk weights}&\text{Left boundary weights}&\text{Right boundary weights}\\[2.2mm]\hline
\hline\rule{0ex}{3.6ex}W(\Wv,u)=W(\Wvi,u)=1&W(\WLiii,u)=W(\WLiv,u)=1&W(\WRiii,u)=W(\WRiv,u)=1\\[2.8mm]
W(\Wi,u)=W(\Wii,u)=\frac{\sigma(q^2u)}{\sigma(q^4)}&W(\WLi,u)=W(\WLii,u)=\frac{\sigma(q\,u)}{\sigma(q)}&\\[3.2mm]
W(\Wiii,u)=W(\Wiv,u)=\frac{\sigma(q^2\u)}{\sigma(q^4)}&&W(\WRi,u)=W(\WRii,u)=\frac{\sigma(q\,\u)}{\sigma(q)}\\[3.4mm]\hline
\end{array}$\\[2.4mm]\caption{Bulk and boundary weights.}\label{weights}
\end{table}

It will be convenient for the dependence on~$q$ not to be indicated explicitly by
the notation $W(c,u)$, or by the notation for further $q$-dependent quantities in this paper.
Note also that the corresponding parameter which is used for such weights in the literature
is often the square of the~$q$ used in this paper.

It can be seen that the weights of Table~\ref{weights} satisfy
\begin{align}\notag W(c,1)\big|_{q=e^{i\pi/6}}&=1,\quad\text{for each local configuration $c$ at a bulk vertex},\\
\label{comb}\text{and}\quad W(c,1)&=1,\quad\text{for each local configuration $c$ at a boundary vertex}.\end{align}

For a configuration $C$ of the six-vertex model on~$\mathcal{T}_n$, and parameters $q$ and $u_1,\ldots,u_{n+1}$,
define the weight of left boundary vertex $(i,i)$ to be $W(C_{ii},u_i)$, the weight
of bulk vertex $(i,j)$ to be $W(C_{ij},u_i\,u_{\min(j,2n+2-j)})$, and the
weight of right boundary vertex $(i,2n+2-i)$ to be $W(C_{i,2n+2-i},u_i)$,
where, as before, $C_{ij}$ is the local configuration of $C$ at $(i,j)$.
Note that~$q$ is an overall constant,
which is the same in all of these weights.

The assignment of $u_1,\ldots,u_{n+1}$ in these weights can be illustrated, for $n=3$, as
\psset{unit=10mm}
\begin{equation}\label{col}\raisebox{-21.4mm}{\pspicture(0.8,-0.2)(7.2,4.2)
\rput(7.6,2){.}
\psline[linewidth=0.8pt,linecolor=blue](1,4)(1,3)(7,3)(7,4)\psline[linewidth=0.8pt,linecolor=green](2,4)(2,2)(6,2)(6,4)
\psline[linewidth=0.8pt,linecolor=red](3,4)(3,1)(5,1)(5,4)\psline[linewidth=0.8pt,linecolor=brwn](4,4)(4,0)
\multirput(1,4)(1,0){7}{$\ss\bullet$}\multirput(1,3)(1,0){7}{$\ss\bullet$}\multirput(2,2)(1,0){5}{$\ss\bullet$}
\multirput(3,1)(1,0){3}{$\ss\bullet$}\rput(4,0){$\ss\bullet$}
\rput[tr](1.05,2.9){$\ss\ui$}\rput[tr](1.95,2.9){$\ss\ui\uii$}
\rput[tr](2.95,2.9){$\ss\ui\uiii$}\rput[tr](3.95,2.9){$\ss\ui\uiv$}
\rput[tr](4.95,2.9){$\ss\ui\uiii$}\rput[tr](5.95,2.9){$\ss\ui\uii$}\rput[tl](7,2.9){$\ss\ui$}
\rput[tr](2.05,1.9){$\ss\uii$}\rput[tr](2.95,1.9){$\ss\uii\uiii$}\rput[tr](3.95,1.9){$\ss\uii\uiv$}
\rput[tr](4.95,1.9){$\ss\uii\uiii$}\rput[tl](6,1.9){$\ss\uii$}
\rput[tr](3.05,0.9){$\ss\uiii$}\rput[tr](3.95,0.9){$\ss\uiii\uiv$}\rput[tl](5,0.9){$\ss\uiii$}
\endpspicture}\end{equation}
The color coding in~\eqref{col} indicates that for $i=1,\ldots,n+1$, $u_i$
can be naturally associated with the edges in column~$i$, row~$i$ and column $2n+2-i$ of~$\mathcal{T}_n$, such that the parameter for
the weight of a boundary vertex is the single parameter associated with the incident edges,
and the parameter for the weight of a bulk vertex is the product of the two different parameters associated with the incident edges.

Now define the odd-order DASASM partition function $Z(u_1,\ldots,u_{n+1})$ to be the sum of products of bulk and boundary vertex weights, over all
configurations $C$ of the six-vertex model on~$\mathcal{T}_n$, i.e.,
\begin{equation}\label{Z}Z(u_1,\ldots,u_{n+1})=\sum_{C}\,\prod_{i=1}^n W(C_{ii},u_i)\,\Biggl(\prod_{j=i+1}^{2n+1-i}
W(C_{ij},u_i\,u_{\min(j,2n+2-j)})\Biggr)\,W(C_{i,2n+2-i},u_i).\end{equation}
Note that the top vertices and bottom vertex can be regarded as each having weight~1.

For example,
\begin{equation}\label{Z1}Z(u_1,u_2)=\frac{\sigma(q^2\u_1\u_2)\,\sigma(q\u_1)}{\sigma(q^4)\,\sigma(q)}+
\frac{\sigma(qu_1)\,\sigma(q\u_1)}{\sigma(q)^2}+
\frac{\sigma(qu_1)\,\sigma(q^2u_1u_2)}{\sigma(q)\,\sigma(q^4)}\end{equation}
(where the terms are written in an order which corresponds to that used in~\eqref{SV3}),
and the term of~$Z(u_1,\ldots,u_4)$ which corresponds to the configuration in~\eqref{configex} is
\begin{equation}\frac{
\sigma(qu_1)\,\sigma(q^2u_1u_2)\,\sigma(q^2\u_1\u_4)\,\sigma(q^2\u_1\u_3)\,\sigma(q^2\u_1\u_2)\,\sigma(q\u_1)
\,\sigma(q^2u_2u_4)\,\sigma(q\u_2)\,\sigma(qu_3)}{\sigma(q)^4\,\sigma(q^4)^5}.\end{equation}

Similarly, define the odd-order DASASM$_+$ and DASASM$_-$ partition functions
$Z_+(u_1,\ldots,u_{n+1})$ and $Z_-(u_1,\ldots,u_{n+1})$ to be the sum of products of bulk and boundary vertex weights, over all
configurations~$C$ of the six-vertex model on~$\mathcal{T}_n$ in which the edge incident to the bottom vertex is directed 
upwards or downwards, respectively. Hence,
\begin{equation}\label{Zsum}Z(u_1,\ldots,u_{n+1})=Z_+(u_1,\ldots,u_{n+1})+Z_-(u_1,\ldots,u_{n+1}).\end{equation}

It follows from~\eqref{comb}, and the bijections among the set of configurations of the six-vertex model on~$\mathcal{T}_n$,
the set of odd DASASM triangles of order~$n$ and $\DASASM(2n+1)$, that
\begin{equation}\label{ZASMenum}|\DASASM(2n+1)|=Z(\underbrace{1,\ldots,1}_{n+1})\big|_{q=e^{i\pi/6}}\end{equation}
and
\begin{equation}\label{ZASMenumPM}|\DASASM_\pm(2n+1)|=Z_\pm(\underbrace{1,\ldots,1}_{n+1})\big|_{q=e^{i\pi/6}}.\end{equation}

Now consider the replacement of $u_{n+1}$ with $-u_{n+1}$ in the sum of~\eqref{Z}.
Then the bulk weights $W(\Wi,u_iu_{n+1})$, $W(\Wii,u_iu_{n+1})$,
$W(\Wiii,u_iu_{n+1})$ and $W(\Wiv,u_iu_{n+1})$, whose local configurations are associated with the 0's in
the first $n$ rows of the central column of the corresponding DASASMs, change signs under this replacement,
while all other bulk weights and all boundary weights are unchanged.

It follows, using~\eqref{N0}, that
\begin{equation}\label{Zdiff}(-1)^nZ(u_1,\ldots,u_n,-u_{n+1})=
Z_+(u_1,\ldots,u_{n+1})-Z_-(u_1,\ldots,u_{n+1}),\end{equation}
and so, using~\eqref{Zsum}, that
\begin{equation}\label{ZPM}Z_\pm(u_1,\ldots,u_{n+1})=\textstyle\frac{1}{2}\bigl(Z(u_1,\ldots,u_n,u_{n+1})\pm
(-1)^nZ(u_1,\ldots,u_n,-u_{n+1})\bigr).\end{equation}

\subsection{Schur functions and semistandard Young tableaux}\label{SchurfunctionsSSYT}
The notation and results regarding Schur functions and semistandard Young tableaux which will be used in this paper are as follows.
(For further information, see for example Stanley~\cite[Ch.~7]{Sta99}.)

For a partition $\lambda$ of length $\ell(\lambda)\le k$, and variables $x_1,\ldots,x_k$,
let $s_\lambda(x_1,\ldots,x_k)$ be the Schur function (or Schur polynomial) indexed by $\lambda$,
and let $\SSYT_\lambda(k)$ be the set of semistandard Young tableaux of shape $\lambda$ with entries from $\{1,\ldots,k\}$.

A determinantal formula for Schur functions is
\begin{equation}\label{Schurdet}s_\lambda(x_1,\ldots,x_k)=\frac{\displaystyle\det_{1\le i,j\le k}\Bigl(x_i^{\lambda_j+k-j}\Bigr)}
{\prod_{1\le i<j\le k}\bigl(x_i-x_j\bigr)},\end{equation}
a formula for Schur functions involving a sum over semistandard Young tableaux is
\begin{equation}\label{SchurSSYT}s_\lambda(x_1,\ldots,x_k)=\sum_{T\in\SSYT_\lambda(k)}x_1^{\#(1,T)}\ldots x_k^{\#(k,T)},\end{equation}
and a product formula for the number of semistandard Young tableaux is
\begin{equation}\label{numSSYTWeyl}|\SSYT_\lambda(k)|=\frac{\prod_{1\le i<j\le k}(\lambda_i-\lambda_j-i+j)}
{\prod_{i=1}^{k-1}i!},\end{equation}
where $\lambda_i=0$ for $i=\ell(\lambda)+1,\ldots,k$, and $\#(i,T)$ denotes the number of occurrences of $i$ in $T$.

It follows immediately from~\eqref{Schurdet} that $s_\lambda(x_1,\ldots,x_k)$ is symmetric in $x_1,\ldots,x_k$,
and immediately from~\eqref{SchurSSYT} that
\begin{equation}\label{numSSYTSchur}s_\lambda(\underbrace{1,\ldots,1}_k)=|\SSYT_\lambda(k)|.\end{equation}
A further identity is
\begin{equation}\label{Schurder}\textstyle\frac{d}{dx}s_\lambda(\underbrace{1,\ldots,1}_{k-1},x)\big|_{x=1}=
\frac{|\lambda|}{k}\;|\SSYT_\lambda(k)|,\end{equation}
which can be proved by using symmetry and~\eqref{SchurSSYT} to give
\begin{equation*}k\,s_\lambda(\underbrace{1,\ldots,1}_{k-1},x)=s_\lambda(x,\underbrace{1,\ldots,1}_{k-1})+\,\cdots\,+
s_\lambda(\underbrace{1,\ldots,1}_{k-1},x)=
\sum_{T\in\SSYT_\lambda(k)}\bigl(x^{\#(1,T)}+\,\cdots\,+x^{\#(k,T)}\bigr),\end{equation*}
and hence
\begin{equation*}k\,{\textstyle\frac{d}{dx}}s_\lambda(\underbrace{1,\ldots,1}_{k-1},x)\big|_{x=1}
=\sum_{T\in\SSYT_\lambda(k)}\bigl(\#(1,T)+\,\cdots\,+\#(k,T)\bigr)=\sum_{T\in\SSYT_\lambda(k)}|\lambda|=|\SSYT_\lambda(k)|\;|\lambda|.\end{equation*}

\section{Main results}\label{MainResults}
In this section, the main results of the paper are presented, with the cases of 
odd-order DASASMs whose central entry is arbitrary or fixed 
being considered in separate subsections.  The proofs of Theorems~\ref{DASASMFulldetTh} and~\ref{ZDASASMFullSchurTh}
will be given in Section~\ref{Proofs}.  All of the other results are corollaries of these theorems, 
and their proofs from the theorems are given in this section. The notation of~\eqref{not} is used in all of the results.

\subsection{Results for odd-order DASASMs with arbitrary central entry}
The main results of this paper involving odd-order DASASMs whose central entry is arbitrary are as follows.

\begin{theorem}\label{DASASMFulldetTh}
The odd-order DASASM partition function is given by
\begin{multline}\label{DASASMFulldet}Z(u_1,\ldots,u_{n+1})=\\
\frac{\sigma(q^2)^n}{\sigma(q)^{2n}\,\sigma(q^4)^{n^2}}
\prod_{i=1}^n\frac{\sigma(u_i)\sigma(qu_i)\sigma(q\u_i)\sigma(q^2u_iu_{n+1})\sigma(q^2\u_i\u_{n+1})}{\sigma(u_i\u_{n+1})}
\prod_{1\le i<j\le n}\biggl(\frac{\sigma(q^2u_iu_j)\sigma(q^2\u_i\u_j)}{\sigma(u_i\u_j)}\biggr)^2\\
\,\times\left(\det_{1\le i,j\le n+1}\left(\begin{cases}\frac{q^2+\q^2+u_i^2+\u_j^2}{\sigma(q^2u_iu_j)\,\sigma(q^2\u_i\u_j)},&i\le n\\[1.5mm]
\frac{u_{n+1}-1}{u_j^2-1},&i=n+1\end{cases}\right)+
\det_{1\le i,j\le n+1}\left(\begin{cases}\frac{q^2+\q^2+\u_i^2+u_j^2}{\sigma(q^2u_iu_j)\,\sigma(q^2\u_i\u_j)},&i\le n\\[1.5mm]
\frac{\u_{n+1}-1}{\u_j^2-1},&i=n+1\end{cases}\right)\right).\end{multline}
\end{theorem}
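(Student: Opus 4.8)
The plan is to prove Theorem~\ref{DASASMFulldetTh} by the Korepin--Izergin method (cf.~\cite{Kor82,Kup96,Kup02}): first isolate a small list of properties that determine the partition function $Z(u_1,\dots,u_{n+1})$ uniquely, and then verify that the right-hand side of~\eqref{DASASMFulldet} has all of them. Working from the definition~\eqref{Z}, the Yang--Baxter equation~\eqref{YBE} for the bulk weights, and the reflection equation~\eqref{RE} for the boundary weights, I would establish the following. (P1)~$Z$ is symmetric in $u_1,\dots,u_n$; this is the usual Sklyanin-type argument, in which an $R$-matrix (with its unitarity/crossing normalization) is inserted between two adjacent rows and transported across $\mathcal{T}_n$, the reflection equation being exactly what moves it past the left and right boundary vertices where columns $i$ and $2n+2-i$ meet. (P2)~$Z$ is a Laurent polynomial in each $u_i$ whose width is bounded by an explicit number read off from the length of row~$i$ of $\mathcal{T}_n$, with a correspondingly small bound in~$u_{n+1}$, which meets only the central column; this is a degree count on the weights of Table~\ref{weights}. (P3)~a reduction relation: at a distinguished value of one of the parameters --- the value at which a boundary weight together with the adjacent bulk weights forces the outermost layer of every configuration to be frozen --- $Z$ for order~$n$ equals an explicit product of $\sigma$-factors times $Z$ for order~$n-1$; this is a standard freezing argument. (P4)~the small-$n$ values, e.g.\ $Z(u_1)=1$ for $n=0$ and the explicit formula~\eqref{Z1} for $n=1$.

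These properties determine $Z$ uniquely: by (P1) and (P2), $Z$ lies in a finite-dimensional space of Laurent polynomials symmetric in $u_1,\dots,u_n$, and (P3) together with induction on~$n$ forces enough evaluations of an element of that space to pin it down, with (P4) as the base of the induction. This is the analogue, for the present model, of the Korepin conditions for $\ASM(n)$.

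It then remains to check that $F(u_1,\dots,u_{n+1})$, the right-hand side of~\eqref{DASASMFulldet}, satisfies (P1)--(P4). The delicate point is (P1): the scalar prefactor is manifestly symmetric in $u_1,\dots,u_n$ apart from the factors $1/\sigma(u_i\bar u_j)^2$, so one must show that the sum of the two determinants vanishes to \emph{second} order along each hyperplane $u_i=u_j$ and that the resulting quotient is symmetric under the transposition $u_i\leftrightarrow u_j$. For each of the two matrices, rows $i$ and $j$ among its first $n$ rows coincide when $u_i=u_j$, giving each determinant a first-order zero; a short computation with the derivative of such a row --- or a direct factorization of $\det_1+\det_2$ exploiting that the two upper-left $n\times n$ blocks are transposes of one another --- upgrades this to the required second-order vanishing and exhibits $F$ as a genuine symmetric Laurent polynomial of the correct width, giving (P2). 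For (P3) I would substitute the distinguished value into $F$ and Laplace-expand each determinant along the row or column that degenerates there, matching the cofactor, up to the predicted $\sigma$-factors, with the order-$(n-1)$ matrix; the non-uniform last row, with entries $\tfrac{u_{n+1}-1}{u_j^{2}-1}$ in the first determinant and $\tfrac{\bar u_{n+1}-1}{\bar u_j^{2}-1}$ in the second, must be tracked through this expansion, and the identities~\eqref{Zsum}--\eqref{ZPM} relating $Z$, $Z_+$ and $Z_-$ give a convenient consistency check (as does the observation that $F$ collapses to~$1$ at $n=0$ via $\tfrac{1}{u_1+1}+\tfrac{u_1}{u_1+1}$, and that setting $u_{n+1}=1$ collapses $F$ to a single $n\times n$ determinant because the two blocks are transposes, matching Corollary~\ref{DASASMdetCoroll}). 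Finally (P4) is a direct evaluation of $F$ at $n=1$ reproducing~\eqref{Z1}, which also fixes the overall constant $\sigma(q^2)^n/(\sigma(q)^{2n}\,\sigma(q^4)^{n^2})$.

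I expect the two main obstacles to be: choosing the degree bounds in (P2) and the exact distinguished value in (P3) sharply enough that the dimension count in the uniqueness step actually closes --- the special role of $u_{n+1}$ and of the non-uniform last matrix row must be accommodated consistently on both sides of the argument --- and the second-order vanishing of $\det_1+\det_2$ on $u_i=u_j$, since neither determinant has this property on its own and the cancellation rests on the transpose relation between the two blocks. The remaining steps (the Yang--Baxter/reflection manipulations for (P1) of~$Z$, the degree count for (P2) of~$Z$, and the Laplace expansions for (P3) of~$F$) should be routine.
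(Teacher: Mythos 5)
Your overall architecture --- isolate a list of properties that determine $Z(u_1,\ldots,u_{n+1})$ uniquely, then verify them for the right-hand side of~\eqref{DASASMFulldet} --- is exactly the paper's (Section~\ref{DASASMFulldetThPr}), and your (P1), (P2), (P4) and the outline of the verification for the determinantal side are sound. But the uniqueness step has a genuine gap, and it is precisely the one you flag without resolving: properties (P1)--(P4) as listed do not pin $Z$ down. Two ingredients are missing. First, your list omits the inversion symmetry $Z(\u_1,\ldots,\u_{n+1})=Z(u_1,\ldots,u_{n+1})$ (Proposition~\ref{recipth}) and the evenness of $Z$ in each of $u_1,\ldots,u_n$ (part of Proposition~\ref{degth}); these are what turn each known specialization into four, and without them no reasonable count closes. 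Second, the ``distinguished value at which a boundary weight freezes the outermost layer'' is $u_1=q$ (Proposition~\ref{spec1prop}); even combined with inversion and evenness it yields $Z$ at only the two values $u_1^2=q^{\pm2}$, against the $2n+1$ values of $u_1^2$ demanded by the degree bound. The paper closes the count in one of two ways: either with the specialization $u_1u_{n+1}=q^2$ (Proposition~\ref{spec3prop}), which together with (iv)--(vi) gives $4n$ evaluations in the variable $u_{n+1}$ against a degree width of only $2n+1$ --- but whose proof is \emph{not} a simple freezing: it requires a chain of Yang--Baxter and reflection moves comparable in length to the symmetry argument itself --- or with the pair $u_1=q$ and $u_1u_2=q^2$ (Propositions~\ref{spec1prop} and~\ref{spec2prop}), which together give $2n+2$ evaluations of $u_1^2$ (the alternative proof of Section~\ref{DASASMFulldetThPrAlt}). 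You must commit to one of these and match it to the corresponding degree bound; the single freezing relation you describe supports neither.

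Two smaller points on the verification for the right-hand side. Symmetry in $u_1,\ldots,u_n$ needs no vanishing analysis at all: $\sigma(u_i\u_j)^2$ is itself invariant under $u_i\leftrightarrow u_j$, and each determinant is invariant under the simultaneous swap of rows $i,j$ and columns $i,j$. The second-order vanishing of the sum of the two determinants along $u_i=u_j$ only becomes necessary if you insist on Laurent polynomiality of the right-hand side in each $u_i$, as your (P2) does; the paper's main proof avoids this entirely by requiring polynomiality only in $u_{n+1}$, where the relevant vanishing is the first-order statement that the determinant vanishes at $u_{n+1}=\pm u_i$ because columns $i$ and $n{+}1$ then coincide. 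If you do follow your route, be aware that ``a short computation with the derivative of such a row'' understates the difficulty: the paper's alternative proof handles the quadratic vanishing by introducing an auxiliary determinant $P'$ in which the row variables and column variables are decoupled, establishing polynomiality and simple vanishing in each set independently, and only then setting them equal.
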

This result will be proved in Section~\ref{DASASMFulldetThPr}, and an alternative proof will be sketched in Section~\ref{DASASMFulldetThPrAlt}.

Note that the two determinants on the RHS of~\eqref{DASASMFulldet} are related to each other by replacement of
$u_1,\ldots,u_{n+1}$ by $\u_1,\ldots,\u_{n+1}$, and that the prefactor is unchanged under this transformation.

\begin{corollary}\label{DASASMdetCoroll}
The odd-order DASASM partition function at $u_{n+1}=1$ is given by
\begin{multline}\label{DASASMdet}Z(u_1,\ldots,u_n,1)=\\
\frac{\sigma(q^2)^n}{\sigma(q)^{2n}\,\sigma(q^4)^{n^2}}
\prod_{i=1}^n\sigma(qu_i)\sigma(q\u_i)\sigma(q^2u_i)\sigma(q^2\u_i)
\prod_{1\le i<j\le n}\biggl(\frac{\sigma(q^2u_iu_j)\sigma(q^2\u_i\u_j)}{\sigma(u_i\u_j)}\biggr)^2\\
\times\det_{1\le i,j\le n}\biggl(\frac{q^2+\q^2+u_i^2+\u_j^2}
{\sigma(q^2u_iu_j)\,\sigma(q^2\u_i\u_j)}\biggr).\end{multline}
\end{corollary}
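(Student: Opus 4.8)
The plan is to derive Corollary~\ref{DASASMdetCoroll} directly from Theorem~\ref{DASASMFulldetTh} by specializing $u_{n+1}=1$, so the only real work is to understand what happens to each of the two $(n+1)\times(n+1)$ determinants and to the prefactor in this limit. The prefactor presents no difficulty: setting $u_{n+1}=1$ in $\prod_{i=1}^n\frac{\sigma(u_i)\sigma(qu_i)\sigma(q\u_i)\sigma(q^2u_iu_{n+1})\sigma(q^2\u_i\u_{n+1})}{\sigma(u_i\u_{n+1})}$ gives $\prod_{i=1}^n\frac{\sigma(u_i)\sigma(qu_i)\sigma(q\u_i)\sigma(q^2u_i)\sigma(q^2\u_i)}{\sigma(u_i)}=\prod_{i=1}^n\sigma(qu_i)\sigma(q\u_i)\sigma(q^2u_i)\sigma(q^2\u_i)$, which is exactly the product appearing in~\eqref{DASASMdet}, while the rest of the prefactor is untouched since it does not involve $u_{n+1}$.

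The key step is the evaluation of the two determinants at $u_{n+1}=1$. First I would note that the last-row entry in the first determinant, $\frac{u_{n+1}-1}{u_j^2-1}$, vanishes for every $j=1,\ldots,n$ when $u_{n+1}=1$, and likewise for the $j\le n$ entries of the last column (which are the generic $i\le n$ entries evaluated at $u_j=u_{n+1}=1$, hence finite, but that is irrelevant). So I would expand the first determinant along its last row: all terms with $j\le n$ die, leaving only the $(n+1,n+1)$ cofactor times the $(n+1,n+1)$ entry. That entry is $\lim_{u_{n+1}\to1}\frac{u_{n+1}-1}{u_{n+1}^2-1}=\frac12$, and the cofactor is precisely $\det_{1\le i,j\le n}\bigl(\frac{q^2+\q^2+u_i^2+\u_j^2}{\sigma(q^2u_iu_j)\,\sigma(q^2\u_i\u_j)}\bigr)$ with $u_{n+1}=1$ substituted into column $n+1$'s former role—but since that column has been removed, it is just the honest $n\times n$ determinant in~\eqref{DASASMdet}. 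By the remark immediately following Theorem~\ref{DASASMFulldetTh}, the second determinant is obtained from the first by $u_i\mapsto\u_i$ throughout; applying the same last-row expansion gives $\frac12\det_{1\le i,j\le n}\bigl(\frac{q^2+\q^2+\u_i^2+u_j^2}{\sigma(q^2u_iu_j)\,\sigma(q^2\u_i\u_j)}\bigr)$. The point is that this $n\times n$ determinant equals the one from the first term: transposing swaps the roles of $i$ and $j$ in the numerator $q^2+\q^2+\u_i^2+u_j^2\mapsto q^2+\q^2+\u_j^2+u_i^2$ while leaving the symmetric denominator $\sigma(q^2u_iu_j)\sigma(q^2\u_i\u_j)$ fixed, so the two $n\times n$ determinants coincide. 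Hence the bracketed sum of two $(n+1)\times(n+1)$ determinants collapses to $2\cdot\frac12\cdot\det_{1\le i,j\le n}\bigl(\frac{q^2+\q^2+u_i^2+\u_j^2}{\sigma(q^2u_iu_j)\,\sigma(q^2\u_i\u_j)}\bigr)=\det_{1\le i,j\le n}\bigl(\frac{q^2+\q^2+u_i^2+\u_j^2}{\sigma(q^2u_iu_j)\,\sigma(q^2\u_i\u_j)}\bigr)$, matching~\eqref{DASASMdet}.

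The main obstacle—such as it is—is bookkeeping: one must be careful that the $i\le n$ entries of the last column of each $(n+1)\times(n+1)$ determinant remain finite and well-defined at $u_{n+1}=1$ (they do: $\sigma(q^2u_i)\sigma(q^2\u_i)\neq0$ generically, and $q^2+\q^2+u_i^2+1$ is a polynomial), so that the cofactor expansion is legitimate and no $0\cdot\infty$ indeterminacy arises; the only limit needed is the elementary $\frac{u_{n+1}-1}{u_{n+1}^2-1}\to\frac12$. One should also confirm that deleting row $n+1$ and column $n+1$ from the first determinant's matrix genuinely yields the matrix $\bigl(\frac{q^2+\q^2+u_i^2+\u_j^2}{\sigma(q^2u_iu_j)\,\sigma(q^2\u_i\u_j)}\bigr)_{1\le i,j\le n}$ with no residual dependence on $u_{n+1}$—which is immediate since those entries never involved $u_{n+1}$ in the first place. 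With these checks in hand the corollary follows at once; I would present it as a short paragraph invoking Theorem~\ref{DASASMFulldetTh}, the cofactor expansion along the last row, the transpose-symmetry identification of the two $n\times n$ determinants, and the stated simplification of the prefactor.
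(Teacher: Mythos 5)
Your proposal is correct and follows essentially the same route as the paper's own (one-line) proof: take $u_{n+1}\to1$ in Theorem~\ref{DASASMFulldetTh}, observe that the last row of each matrix becomes $(0,\ldots,0,\tfrac12)$, and note that the two resulting $n\times n$ determinants coincide (by transposition) while the prefactor $\sigma(u_i)/\sigma(u_i\u_{n+1})$ cancels. The extra bookkeeping you supply — the cofactor expansion, the limit $\frac{u_{n+1}-1}{u_{n+1}^2-1}\to\frac12$, and the transpose identification — is exactly what the paper leaves implicit.
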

\begin{proof}
Taking $u_{n+1}\rightarrow1$ in~\eqref{DASASMFulldet}, the last row of each matrix
becomes $(0,\ldots,0,\frac{1}{2})$, and the result then follows.
\end{proof}
An alternative proof of Corollary~\ref{DASASMdetCoroll}, 
which does not use Theorem~\ref{DASASMFulldetTh}, will be outlined in Section~\ref{DASASMFulldetThPrAlt}.

Note that, due to the factor $\prod_{i=1}^n\sigma(q^2u_i)\sigma(q^2\u_i)$ on the RHS of~\eqref{DASASMdet}
(which, unlike other parts of the prefactor, does not cancel with terms from the determinant),
$Z(u_1,\ldots,u_n,1)$ is zero at $u_i=q^{\pm2}$ and $u_i=-q^{\pm2}$, for each $i=1,\ldots,n$.
This property will be explained further by the proof of Proposition~\ref{spec4prop}.

\begin{theorem}\label{ZDASASMFullSchurTh}
The odd-order DASASM partition function at $q=e^{i\pi/6}$ is given by
\begin{multline}\label{ZDASASMFullSchur}
Z(u_1,\ldots,u_{n+1})\big|_{q=e^{i\pi/6}}=
\textstyle3^{-n(n-1)/2}\,\Bigl(\frac{u_{n+1}^n}{u_{n+1}+1}\:s_{(n,n-1,n-1,\ldots,2,2,1,1)}(u_1^2,\u_1^2,\ldots,u_n^2,\u_n^2,\u_{n+1}^2)\\
\textstyle+\frac{\u_{n+1}^n}{\u_{n+1}+1}\:s_{(n,n-1,n-1,\ldots,2,2,1,1)}(u_1^2,\u_1^2,\ldots,u_n^2,\u_n^2,u_{n+1}^2)\Bigr).\end{multline}
\end{theorem}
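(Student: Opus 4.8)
The plan is to obtain Theorem~\ref{ZDASASMFullSchurTh} directly from Theorem~\ref{DASASMFulldetTh} by setting $q=e^{i\pi/6}$ in~\eqref{DASASMFulldet} and then evaluating the two $(n{+}1)\times(n{+}1)$ determinants with the help of the determinantal identity~\eqref{OkadaId}. At $q=e^{i\pi/6}$ one has $q^2+\q^2=1$, $q^4+\q^4=-1$, $\sigma(q)^2=-1$ and $\sigma(q^2)=\sigma(q^4)=i\sqrt3$, so the constant $\sigma(q^2)^n/(\sigma(q)^{2n}\sigma(q^4)^{n^2})$ becomes a power of $i\sqrt3$ times a sign, while for $i\le n$ the $(i,j)$ entry simplifies: its numerator is $1+u_i^2+\u_j^2$, and $\sigma(q^2u_iu_j)\,\sigma(q^2\u_i\u_j)=(q^4+\q^4)-(u_i^2u_j^2+\u_i^2\u_j^2)$ becomes $-\bigl(1+u_i^2u_j^2+\u_i^2\u_j^2\bigr)=-(u_i^6u_j^6-1)/\bigl(u_i^2u_j^2(u_i^2u_j^2-1)\bigr)$, a rational function of $u_i^2u_j^2$.

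Since the second determinant in~\eqref{DASASMFulldet} is the first under $u_k\mapsto\u_k$ for all $k$, and the prefactor of~\eqref{DASASMFulldet} is invariant under that substitution, it suffices to treat the first determinant; applying $u_k\mapsto\u_k$ afterwards produces the second term of~\eqref{ZDASASMFullSchur} from the first, the order of the arguments of $s_\lambda$ being immaterial by the symmetry visible in~\eqref{Schurdet}. For the first determinant I would factor $(u_{n+1}-1)$ out of the last row (its $j$th entry being $(u_{n+1}-1)/(u_j^2-1)$ for every $j$, which for $j=n+1$ reads $1/(u_{n+1}+1)$), then pull monomial factors out of the rows and columns so as to clear the denominators $u_i^6u_j^6-1$, and finally invoke~\eqref{OkadaId}. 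The role of that identity is to recognise the resulting determinant as one whose first $n$ rows are of ``doubled Cauchy'' type, each row $i\le n$ contributing the pair of variables $u_i^2,\u_i^2$, and whose special last row contributes the single remaining variable $\u_{n+1}^2$ together with the scalar $u_{n+1}^n/(u_{n+1}+1)$; the output is a product of explicit Vandermonde-type factors and a determinant $\det_{1\le k,\ell\le 2n+1}\bigl(z_k^{\mu_\ell}\bigr)$ with $z=(u_1^2,\u_1^2,\dots,u_n^2,\u_n^2,\u_{n+1}^2)$ and an exponent sequence $\mu$ which, by~\eqref{Schurdet}, is that of the partition $(n,n-1,n-1,\dots,2,2,1,1)$.

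Dividing by the matching Vandermonde turns $\det_{1\le k,\ell\le 2n+1}\bigl(z_k^{\mu_\ell}\bigr)$ into $s_{(n,n-1,n-1,\dots,2,2,1,1)}(u_1^2,\u_1^2,\dots,u_n^2,\u_n^2,\u_{n+1}^2)$, and the remaining Vandermonde-type and monomial factors produced along the way cancel against the $u$-dependent part of the prefactor of~\eqref{DASASMFulldet}, namely the $\prod_{1\le i<j\le n}(\cdots)^2$ factor and the $\prod_{i=1}^n\sigma(u_i)\sigma(qu_i)\sigma(q\u_i)\sigma(q^2u_iu_{n+1})\sigma(q^2\u_i\u_{n+1})/\sigma(u_i\u_{n+1})$ factor. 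What survives numerically should collapse to $3^{-n(n-1)/2}$, and adding the $u_k\mapsto\u_k$ counterpart finishes the proof.

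The hard part will be the middle step: putting the determinant of~\eqref{DASASMFulldet} at $q=e^{i\pi/6}$ into exactly the form that~\eqref{OkadaId} requires, checking that the bulk rows genuinely ``double'' into the pairs $u_i^2,\u_i^2$ while the special last row yields precisely $\u_{n+1}^2$ and $u_{n+1}^n/(u_{n+1}+1)$, and matching the exponent data to the double-staircase partition $(n,n-1,n-1,\dots,1,1)$. Tightly bound to this is the bookkeeping of all the rational and sign factors --- those from clearing the denominators, those in~\eqref{OkadaId}, and those in the specialised prefactor of~\eqref{DASASMFulldet} --- which must consolidate into the single constant $3^{-n(n-1)/2}$ with no residual dependence on $u_1,\dots,u_n$ (here the parities of $n(n-1)$ and of the number of variable transpositions involved will need care).
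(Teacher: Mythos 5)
Your proposal is correct and follows essentially the same route as the paper: specialize $q=e^{i\pi/6}$ in~\eqref{DASASMFulldet}, rewrite each entry in the Cauchy-like form $\frac{a_i-b_j}{x_i-y_j}$ (the paper's key rewriting is $\frac{u_i^2+1+\u_j^2}{u_i^2u_j^2+1+\u_i^2\u_j^2}=u_i^2\u_j^2\,\frac{(u_i^2+u_i^4)-(\u_j^2+\u_j^4)}{u_i^6-\u_j^6}$, with $x_{n+1}=1$, $a_{n+1}=-1$ for the special row), apply~\eqref{OkadaId}, and recognize the resulting alternant in $u_1^2,\u_1^2,\ldots,u_n^2,\u_n^2,\u_{n+1}^2$ as the Schur function via~\eqref{Schurdet}, with the second term of~\eqref{ZDASASMFullSchur} obtained by the $u_k\mapsto\u_k$ symmetry. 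The bookkeeping you defer does consolidate to $3^{-n(n-1)/2}$ exactly as you predict.
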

This result will be proved in Section~\ref{ZDASASMFullSchurPr}, using Theorem~\ref{DASASMFulldetTh}.

Note that, by using the standard formula
for taking the reciprocals of all variables in a Schur function (e.g., Stanley~\cite[Ex.~7.41]{Sta99}), the
Schur functions in~\eqref{ZDASASMFullSchur} could be written instead as
$s_{(n,n-1,n-1,\ldots,2,2,1,1)}(u_1^2,\u_1^2,\ldots,u_n^2,\u_n^2,u_{n+1}^{\mp2})=
u_{n+1}^{\mp2n}\:s_{(n,n,\ldots,2,2,1,1)}(u_1^2,\u_1^2,\ldots,u_n^2,\u_n^2,u_{n+1}^{\pm2})$.

\begin{corollary}\label{ZDASASMSchurTh}
The odd-order DASASM partition function at $u_{n+1}=1$ and $q=e^{i\pi/6}$ is given by
\begin{equation}\label{ZDASASMSchur}
Z(u_1,\ldots,u_n,1)\big|_{q=e^{i\pi/6}}=3^{-n(n-1)/2}\,s_{(n,n-1,n-1,\ldots,2,2,1,1)}(u_1^2,\u_1^2,\ldots,u_n^2,\u_n^2,1).\end{equation}
\end{corollary}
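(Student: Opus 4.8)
The plan is to obtain Corollary~\ref{ZDASASMSchurTh} by directly specializing Theorem~\ref{ZDASASMFullSchurTh} at $u_{n+1}=1$. The first point to check is that this specialization is legitimate without any limiting argument: the right-hand side of~\eqref{ZDASASMFullSchur} is a rational function of $u_{n+1}$ whose only denominators are $u_{n+1}+1$ and $\u_{n+1}+1$, and both of these equal~$2$ when $u_{n+1}=1$, while Schur polynomials are genuine polynomials in their arguments. Hence one may simply substitute $u_{n+1}=1$. (This should be contrasted with the passage from Theorem~\ref{DASASMFulldetTh} to Corollary~\ref{DASASMdetCoroll}, where the prefactor carries a factor $\sigma(u_i\u_{n+1})$ in a denominator, so that the substitution there requires combining the vanishing last row of each determinant with a cancellation in the prefactor.)

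Next I would carry out the substitution. Setting $u_{n+1}=1$ forces $\u_{n+1}=1$ as well, so the two coefficients $\frac{u_{n+1}^n}{u_{n+1}+1}$ and $\frac{\u_{n+1}^n}{\u_{n+1}+1}$ appearing in~\eqref{ZDASASMFullSchur} both become $\tfrac12$, and the two Schur functions $s_{(n,n-1,n-1,\ldots,2,2,1,1)}(u_1^2,\u_1^2,\ldots,u_n^2,\u_n^2,\u_{n+1}^2)$ and $s_{(n,n-1,n-1,\ldots,2,2,1,1)}(u_1^2,\u_1^2,\ldots,u_n^2,\u_n^2,u_{n+1}^2)$ both collapse to $s_{(n,n-1,n-1,\ldots,2,2,1,1)}(u_1^2,\u_1^2,\ldots,u_n^2,\u_n^2,1)$. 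Adding the two now-identical terms produces the factor $\tfrac12+\tfrac12=1$, and what remains is exactly $3^{-n(n-1)/2}\,s_{(n,n-1,n-1,\ldots,2,2,1,1)}(u_1^2,\u_1^2,\ldots,u_n^2,\u_n^2,1)$, which is~\eqref{ZDASASMSchur}.

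There is no real obstacle in this argument: the entire content of Corollary~\ref{ZDASASMSchurTh} is already packaged inside Theorem~\ref{ZDASASMFullSchurTh}, and the substantive work of this part of the paper lies in proving that theorem (through Theorem~\ref{DASASMFulldetTh} and the determinantal identity~\eqref{OkadaId}). The specialization step itself is a one-line computation. If desired, one could instead give a self-contained derivation of Corollary~\ref{ZDASASMSchurTh} starting from Corollary~\ref{DASASMdetCoroll} and applying the determinantal identity directly at $q=e^{i\pi/6}$, but since Theorem~\ref{ZDASASMFullSchurTh} is available this is unnecessary.
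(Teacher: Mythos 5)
Your proposal is correct and is exactly the paper's argument: the paper's entire proof of Corollary~\ref{ZDASASMSchurTh} is ``Set $u_{n+1}=1$ in~\eqref{ZDASASMFullSchur},'' and your verification that the two terms each contribute $\tfrac12$ of the same Schur function is the right (and only) computation needed. The extra remarks about the absence of singularities at $u_{n+1}=1$ are accurate but not required.
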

\begin{proof}Set $u_{n+1}=1$ in~\eqref{ZDASASMFullSchur}.\end{proof}
A factorization of the Schur function in~\eqref{ZDASASMSchur},
involving odd orthogonal and symplectic characters, will be given in~\eqref{DASASMSchurfact}.  

Note also that the factor $\prod_{i=1}^n\sigma(q^2u_i)\sigma(q^2\u_i)$ on the RHS of~\eqref{DASASMdet} leads,
at $q=e^{i\pi/6}$, to a factor $\prod_{i=1}^n(u_i^2+1+\u_i^2)$ in $s_{(n,n-1,n-1,\ldots,2,2,1,1)}(u_1^2,\u_1^2,\ldots,u_n^2,\u_n^2,1)$ 
on the RHS of~\eqref{ZDASASMSchur}.  This will appear explicitly in the factorization in~\eqref{DASASMSchurfact}.

\begin{corollary}\label{numDASASMth}
The number of $(2n+1)\times(2n+1)$ DASASMs is given by
\begin{align}\notag|\DASASM(2n+1)|&=3^{-n(n-1)/2}\,\bigl|\SSYT_{(n,n-1,n-1,\ldots,2,2,1,1)}(2n+1)\bigr|\\
\label{numDASASM}&=\prod_{i=0}^n\frac{(3i)!}{(n+i)!}.\end{align}
\end{corollary}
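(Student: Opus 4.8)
The plan is to obtain the first equality directly from Corollary~\ref{ZDASASMSchurTh} and then evaluate the resulting Schur function at all variables equal to~$1$. First I would invoke~\eqref{ZASMenum}, which says $|\DASASM(2n+1)|=Z(1,\ldots,1)\big|_{q=e^{i\pi/6}}$; here the $n+1$ arguments are all equal to~$1$, so in particular $u_{n+1}=1$, and Corollary~\ref{ZDASASMSchurTh} applies and gives $Z(1,\ldots,1)\big|_{q=e^{i\pi/6}}=3^{-n(n-1)/2}\,s_{(n,n-1,n-1,\ldots,2,2,1,1)}(1,1,\ldots,1)$, where there are $2n+1$ ones (coming from $u_1^2,\u_1^2,\ldots,u_n^2,\u_n^2,1$ all becoming~$1$). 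By~\eqref{numSSYTSchur} this equals $3^{-n(n-1)/2}\,\bigl|\SSYT_{(n,n-1,n-1,\ldots,2,2,1,1)}(2n+1)\bigr|$, which is the first line of~\eqref{numDASASM}.

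For the second equality the task is purely a count of semistandard Young tableaux, for which I would use the product formula~\eqref{numSSYTWeyl} with $k=2n+1$ and $\lambda=(n,n-1,n-1,\ldots,2,2,1,1)$, i.e.\ $\lambda_1=n$ and $\lambda_{2i}=\lambda_{2i+1}=n-i$ for $i=1,\ldots,n-1$, with $\lambda_i=0$ for $i=2n,2n+1$. Writing $\mu_i=\lambda_i-i$ for $i=1,\ldots,2n+1$, the formula gives $|\SSYT_\lambda(2n+1)|=\prod_{1\le i<j\le 2n+1}(\mu_i-\mu_j)\big/\prod_{i=1}^{2n}i!$. The numerator is a Vandermonde-type product in the strictly decreasing sequence $\mu_1>\mu_2>\cdots>\mu_{2n+1}$, which one checks equals $(n-1,n-3,n-3,n-5,n-5,\ldots)$ shifted appropriately; more precisely $\mu_1=n-1$, $\mu_{2i}=n-3i$, $\mu_{2i+1}=n-3i-1$. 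The plan is then to reorganize $\prod_{i<j}(\mu_i-\mu_j)$ as a ratio of factorials/superfactorials and combine it with the $3^{-n(n-1)/2}$ prefactor and the $\prod_{i=1}^{2n}i!$ denominator, arriving at $\prod_{i=0}^n\frac{(3i)!}{(n+i)!}$. A clean way to package this is to recognize the product over the pairs as counting lattice paths or, equivalently, to match it against the known evaluation of the product $\prod_{i=0}^{n}\frac{(3i)!}{(n+i)!}$ that appears in the literature for the $\{\mathcal I\}$-class and TSSCPP enumeration; one can verify the identity by comparing ratios of consecutive terms in~$n$, which reduces both sides to a rational-function identity.

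The main obstacle I anticipate is purely bookkeeping: correctly handling the repeated parts of $\lambda$ (the pairs $n-1,n-1$ through $1,1$), the two trailing zero parts, and the exact power of~$3$ extracted from the differences $\mu_i-\mu_j$ that are multiples of~$3$, so that the $3^{-n(n-1)/2}$ cancels exactly. It is easy to be off by a factor here, so I would double-check the final formula against the small cases in Table~\ref{card}, namely $1,3,15,126,1782,\ldots$ for $n=0,1,2,3,4$, which match $\prod_{i=0}^n\frac{(3i)!}{(n+i)!}$; this sanity check also confirms the product-formula evaluation. No new techniques beyond~\eqref{numSSYTWeyl}, \eqref{numSSYTSchur} and elementary manipulation of factorials are needed, since Corollary~\ref{ZDASASMSchurTh} does all the structural work.
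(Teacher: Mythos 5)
Your proposal is correct and follows essentially the same route as the paper: the first equality by setting $u_1=\cdots=u_n=1$ in Corollary~\ref{ZDASASMSchurTh} and invoking~\eqref{ZASMenum} and~\eqref{numSSYTSchur}, and the second by applying~\eqref{numSSYTWeyl} and simplifying the resulting product of factorials. (The only blemish is the garbled intermediate description of the sequence $\mu_i$, but your precise formulas $\mu_1=n-1$, $\mu_{2i}=n-3i$, $\mu_{2i+1}=n-3i-1$ are right.)
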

\begin{proof}
The first equality follows immediately by setting $u_1=\ldots=u_n=1$ in~\eqref{ZDASASMSchur},
and using~\eqref{ZASMenum} and~\eqref{numSSYTSchur}.  The second equality
(which was previously obtained by Okada~\cite[Conj.~5.1(2)]{Oka06})
then follows by applying~\eqref{numSSYTWeyl}, and simplifying the resulting expression.
\end{proof}
As indicated in Sections~\ref{intro}--\ref{symmclass},
a recursion relation for $|\DASASM(2n+1)|$ which gives the product formula in~\eqref{numDASASM} was conjectured by Robbins~\cite[Sec.~4.2]{Rob00}.

Note also that, due to the comment after Theorem~\ref{ZDASASMFullSchurTh}, the partition $(n,n-1,n-1,\ldots,2,2,1,1)$
in~\eqref{ZDASASMSchur} and~\eqref{numDASASM} could be replaced by $(n,n,\ldots,2,2,1,1)$.

\subsection{Results for odd-order DASASMs with fixed central entry}
The results of the previous section have certain corollaries for odd-order DASASMs whose central entry is fixed, as follows.

\begin{corollary}
The odd-order DASASM$_\pm$ partition functions are given by
\begin{multline}\label{DASASMFulldetP}
Z_+(u_1,\ldots,u_{n+1})=\\
\frac{\sigma(q^2)^n}{\sigma(q)^{2n}\,\sigma(q^4)^{n^2}}
\prod_{i=1}^n\frac{\sigma(u_i)\sigma(qu_i)\sigma(q\u_i)\sigma(q^2u_iu_{n+1})\sigma(q^2\u_i\u_{n+1})}{\sigma(u_i\u_{n+1})}
\prod_{1\le i<j\le n}\biggl(\frac{\sigma(q^2u_iu_j)\sigma(q^2\u_i\u_j)}{\sigma(u_i\u_j)}\biggr)^2\\
\times\left(\det_{1\le i,j\le n+1}\left(\begin{cases}\frac{q^2+\q^2+u_i^2+\u_j^2}{\sigma(q^2u_iu_j)\,\sigma(q^2\u_i\u_j)},&i\le n\\[1.5mm]
\frac{1}{1-u_j^2},&i=n+1\end{cases}\right)+
\det_{1\le i,j\le n+1}\left(\begin{cases}\frac{q^2+\q^2+\u_i^2+u_j^2}{\sigma(q^2u_iu_j)\,\sigma(q^2\u_i\u_j)},&i\le n\\[1.5mm]
\frac{1}{1-\u_j^2},&i=n+1\end{cases}\right)\right)\end{multline}
and
\begin{multline}\label{DASASMFulldetM}
Z_-(u_1,\ldots,u_{n+1})=\\
\frac{\sigma(q^2)^n}{\sigma(q)^{2n}\,\sigma(q^4)^{n^2}}
\prod_{i=1}^n\frac{\sigma(u_i)\sigma(qu_i)\sigma(q\u_i)\sigma(q^2u_iu_{n+1})\sigma(q^2\u_i\u_{n+1})}{\sigma(u_i\u_{n+1})}
\prod_{1\le i<j\le n}\biggl(\frac{\sigma(q^2u_iu_j)\sigma(q^2\u_i\u_j)}{\sigma(u_i\u_j)}\biggr)^2\\
\times\left(\det_{1\le i,j\le n+1}\left(\begin{cases}\frac{q^2+\q^2+u_i^2+\u_j^2}{\sigma(q^2u_iu_j)\,\sigma(q^2\u_i\u_j)},&i\le n\\[1.5mm]
\frac{u_{n+1}}{u_j^2-1},&i=n+1\end{cases}\right)+
\det_{1\le i,j\le n+1}\left(\begin{cases}\frac{q^2+\q^2+\u_i^2+u_j^2}{\sigma(q^2u_iu_j)\,\sigma(q^2\u_i\u_j)},&i\le n\\[1.5mm]
\frac{\u_{n+1}}{\u_j^2-1},&i=n+1\end{cases}\right)\right).
\end{multline}
\end{corollary}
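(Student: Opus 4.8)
The plan is to derive both identities directly from Theorem~\ref{DASASMFulldetTh} and the decomposition~\eqref{ZPM}, by tracking the effect of the substitution $u_{n+1}\mapsto-u_{n+1}$ on the right-hand side of~\eqref{DASASMFulldet}. The only extra input needed is that $\sigma$ is odd, i.e.\ $\sigma(-x)=-\sigma(x)$, which follows from $\overline{-x}=-\bar x$.

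Write the right-hand side of~\eqref{DASASMFulldet} as $P\,(D_1+D_2)$, where $P=P(u_1,\dots,u_{n+1})$ is the prefactor and $D_1,D_2$ are the two determinants, and let $D_k'$ be $D_k$ with $u_{n+1}$ replaced by $-u_{n+1}$. First I would check the prefactor: under $u_{n+1}\mapsto-u_{n+1}$ the only affected factors in $P$ are the three $\sigma$-factors of $\prod_{i=1}^n$ that involve $u_{n+1}$, namely $\sigma(q^2u_iu_{n+1})$, $\sigma(q^2\u_i\u_{n+1})$ and $\sigma(u_i\u_{n+1})$, each of which changes sign, so each of the $n$ factors of that product is multiplied by $(-1)(-1)/(-1)=-1$ and hence $P(u_1,\dots,u_n,-u_{n+1})=(-1)^n\,P(u_1,\dots,u_n,u_{n+1})$. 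Substituting~\eqref{DASASMFulldet} into~\eqref{ZPM}, the two factors of $(-1)^n$ cancel, giving $Z_\pm(u_1,\dots,u_{n+1})=\tfrac12\,P\bigl((D_1\pm D_1')+(D_2\pm D_2')\bigr)$.

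Next I would note that only the last row of each determinant changes under $u_{n+1}\mapsto-u_{n+1}$: for $i\le n$ the entry with $j\le n$ does not involve $u_{n+1}$, while the entry with $j=n+1$ has a numerator that is even in $u_{n+1}$ and a denominator $\sigma(q^2u_iu_{n+1})\,\sigma(q^2\u_i\u_{n+1})$ whose two factors each change sign, so this entry is unchanged; thus the first $n$ rows of $D_k$ and $D_k'$ coincide. By multilinearity of the determinant in the last row, $D_1\pm D_1'$ equals $D_1$ with its last row replaced by the sum (resp.\ difference) of the last rows of $D_1$ and $D_1'$. Since the $(n+1,j)$-entry of $D_1$ is $\frac{u_{n+1}-1}{u_j^2-1}$, which at $j=n+1$ reduces to $\frac1{u_{n+1}+1}$, a short computation shows that the $j$-th entry of $\tfrac12$ times this combined last row equals $\frac1{1-u_j^2}$ for all $j$ in the $+$ case (the value $\tfrac12\bigl(\frac1{u_{n+1}+1}+\frac1{1-u_{n+1}}\bigr)=\frac1{1-u_{n+1}^2}$ at $j=n+1$ fitting the same formula) and $\frac{u_{n+1}}{u_j^2-1}$ for all $j$ in the $-$ case. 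As $D_2$ arises from $D_1$ by the replacement $u_i\mapsto\u_i$ (see the remark after Theorem~\ref{DASASMFulldetTh}), the analogous combined last rows for $D_2$ have $j$-th entries $\frac1{1-\u_j^2}$ and $\frac{\u_{n+1}}{\u_j^2-1}$, respectively.

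Substituting these last rows back, and noting that $P$ is exactly the prefactor displayed in~\eqref{DASASMFulldetP} and~\eqref{DASASMFulldetM}, yields the two stated formulae. I do not expect a real obstacle: the argument is essentially bookkeeping, and the only steps that require a little care are the sign count for the prefactor and the verification that, under $u_{n+1}\mapsto-u_{n+1}$, the $j=n+1$ column of the first $n$ rows is invariant while the $(n+1,n+1)$-entry of the last row behaves as claimed.
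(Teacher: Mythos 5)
Your proposal is correct and follows exactly the paper's (one-line) proof, namely applying~\eqref{ZPM} to~\eqref{DASASMFulldet}; you have simply made explicit the bookkeeping the paper leaves to the reader. The sign count $(-1)^n$ for the prefactor, the invariance of the first $n$ rows under $u_{n+1}\mapsto-u_{n+1}$, and the multilinearity-in-the-last-row computation are all accurate, including the check that the $(n+1,n+1)$ entries fit the general $j$-formulae.
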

\begin{proof}
Apply \eqref{ZPM} to~\eqref{DASASMFulldet}.
\end{proof}

Note that, in contrast to the determinants in~\eqref{DASASMFulldet}, each
of the determinants in~\eqref{DASASMFulldetP} and~\eqref{DASASMFulldetM}
is singular at $u_{n+1}\rightarrow1$ (due to the form of the bottom right entry of each of the matrices).

\begin{corollary}
The odd-order DASASM$_\pm$ partition functions at $q=e^{i\pi/6}$ are given by
\begin{multline}\label{ZDASASMFullSchurP}
Z_+(u_1,\ldots,u_{n+1})\big|_{q=e^{i\pi/6}}=\textstyle3^{-n(n-1)/2}\,
\Bigl(\frac{u_{n+1}^n}{1-u_{n+1}^2}\:s_{(n,n-1,n-1,\ldots,2,2,1,1)}(u_1^2,\u_1^2,\ldots,u_n^2,\u_n^2,\u_{n+1}^2)\\
\textstyle+\frac{\u_{n+1}^n}{1-\u_{n+1}^2}\:s_{(n,n-1,n-1,\ldots,2,2,1,1)}(u_1^2,\u_1^2,\ldots,u_n^2,\u_n^2,u_{n+1}^2)\Bigr)\end{multline}
and
\begin{multline}\label{ZDASASMFullSchurM}
Z_-(u_1,\ldots,u_{n+1})\big|_{q=e^{i\pi/6}}=\textstyle3^{-n(n-1)/2}\,
\Bigl(\frac{u_{n+1}^{\,n+1}}{u_{n+1}^2-1}\:s_{(n,n-1,n-1,\ldots,2,2,1,1)}(u_1^2,\u_1^2,\ldots,u_n^2,\u_n^2,\u_{n+1}^2)\\
\textstyle+\frac{\u_{n+1}^{n+1}}{\u_{n+1}^2-1}\:s_{(n,n-1,n-1,\ldots,2,2,1,1)}(u_1^2,\u_1^2,\ldots,u_n^2,\u_n^2,u_{n+1}^2)\Bigr).\end{multline}
\end{corollary}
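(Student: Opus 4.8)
The plan is to derive~\eqref{ZDASASMFullSchurP} and~\eqref{ZDASASMFullSchurM} directly from~\eqref{ZDASASMFullSchur} together with the relation~\eqref{ZPM}, which expresses $Z_\pm$ as $\tfrac12\bigl(Z(u_1,\ldots,u_n,u_{n+1})\pm(-1)^n Z(u_1,\ldots,u_n,-u_{n+1})\bigr)$ evaluated at $q=e^{i\pi/6}$. So the first step is simply to write down $Z(u_1,\ldots,u_n,-u_{n+1})\big|_{q=e^{i\pi/6}}$ by substituting $u_{n+1}\mapsto-u_{n+1}$ into~\eqref{ZDASASMFullSchur}. The key observation is that every Schur function appearing there has the shape $(n,n-1,n-1,\ldots,2,2,1,1)$ and has $u_{n+1}^2$ (or $\u_{n+1}^2$) as one of its arguments, so it depends on $u_{n+1}$ only through $u_{n+1}^2$ and is therefore \emph{invariant} under $u_{n+1}\mapsto -u_{n+1}$. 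Thus only the scalar prefactors $\frac{u_{n+1}^n}{u_{n+1}+1}$ and $\frac{\u_{n+1}^n}{\u_{n+1}+1}$ change: the first becomes $\frac{(-1)^n u_{n+1}^n}{1-u_{n+1}}$ and the second becomes $\frac{(-1)^n \u_{n+1}^n}{1-\u_{n+1}}$.

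Next I would carry out the elementary algebra. Applying~\eqref{ZPM}, the factor $(-1)^n$ in front of $Z(u_1,\ldots,u_n,-u_{n+1})$ cancels the $(-1)^n$ just produced, so for $Z_+$ the coefficient of $s_{(n,n-1,\ldots,1,1)}(\ldots,\u_{n+1}^2)$ becomes $\tfrac12\,u_{n+1}^n\bigl(\tfrac1{u_{n+1}+1}+\tfrac1{1-u_{n+1}}\bigr)=\tfrac12\,u_{n+1}^n\cdot\frac{(1-u_{n+1})+(1+u_{n+1})}{(1+u_{n+1})(1-u_{n+1})}=\frac{u_{n+1}^n}{1-u_{n+1}^2}$, and symmetrically the coefficient of $s_{(n,n-1,\ldots,1,1)}(\ldots,u_{n+1}^2)$ becomes $\frac{\u_{n+1}^n}{1-\u_{n+1}^2}$; this is exactly~\eqref{ZDASASMFullSchurP}. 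For $Z_-$ the sign in~\eqref{ZPM} is reversed, so the coefficient of $s_{(n,n-1,\ldots,1,1)}(\ldots,\u_{n+1}^2)$ becomes $\tfrac12\,u_{n+1}^n\bigl(\tfrac1{u_{n+1}+1}-\tfrac1{1-u_{n+1}}\bigr)=\tfrac12\,u_{n+1}^n\cdot\frac{(1-u_{n+1})-(1+u_{n+1})}{1-u_{n+1}^2}=\tfrac12\,u_{n+1}^n\cdot\frac{-2u_{n+1}}{1-u_{n+1}^2}=\frac{u_{n+1}^{\,n+1}}{u_{n+1}^2-1}$, and likewise $\frac{\u_{n+1}^{n+1}}{\u_{n+1}^2-1}$ for the other term, giving~\eqref{ZDASASMFullSchurM}.

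There is essentially no obstacle here: the only thing to be slightly careful about is the bookkeeping of the two $(-1)^n$ signs (one from the comment after Theorem~\ref{ZDASASMFullSchurTh} reflecting the substitution $u_{n+1}\mapsto-u_{n+1}$, one from~\eqref{ZPM}) and the fact that the substitution $u_{n+1}\mapsto-u_{n+1}$ also sends $\u_{n+1}\mapsto-\u_{n+1}$ so that $\frac{\u_{n+1}^n}{\u_{n+1}+1}\mapsto\frac{(-1)^n\u_{n+1}^n}{1-\u_{n+1}}$; once these are handled, the partial-fraction combinations above are forced. Note that this proof is self-contained given Theorem~\ref{ZDASASMFullSchurTh} and~\eqref{ZPM}, so the corollary can be proved with a one-line pointer: ``Apply~\eqref{ZPM} to~\eqref{ZDASASMFullSchur}, using that $s_{(n,n-1,n-1,\ldots,2,2,1,1)}$ depends on its last argument only through its square, and simplify.'' In writing it up I would include just the two scalar identities $\tfrac12 u^n(\tfrac1{u+1}+\tfrac1{1-u})=\frac{u^n}{1-u^2}$ and $\tfrac12 u^n(\tfrac1{u+1}-\tfrac1{1-u})=\frac{u^{n+1}}{u^2-1}$ (and their bars) to make the computation transparent to the reader.
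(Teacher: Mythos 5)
Your proposal is correct and is exactly the paper's proof, which reads simply ``Apply~\eqref{ZPM} to~\eqref{ZDASASMFullSchur}''; your partial-fraction computations faithfully expand that one-line argument. The sign bookkeeping (the $(-1)^n$ from $u_{n+1}\mapsto-u_{n+1}$ cancelling the $(-1)^n$ in~\eqref{ZPM}) is handled correctly.
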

\begin{proof}
Apply~\eqref{ZPM} to~\eqref{ZDASASMFullSchur}.
\end{proof}

\begin{corollary}
The odd-order DASASM$_\pm$ partition functions at $u_{n+1}=1$ and $q=e^{i\pi/6}$ are given by
\begin{multline}\label{ZDASASMSchurP}
Z_+(u_1,\ldots,u_n,1)\big|_{q=e^{i\pi/6}}=3^{-n(n-1)/2}\,
\textstyle\Bigl(2\frac{d}{dx}s_{(n,n-1,n-1,\ldots,2,2,1,1)}(u_1^2,\u_1^2,\ldots,u_n^2,\u_n^2,x)\big|_{x=1}\\
-(n-1)\,s_{(n,n-1,n-1,\ldots,2,2,1,1)}(u_1^2,\u_1^2,\ldots,u_n^2,\u_n^2,1)\Bigr)\end{multline}
and
\begin{multline}\label{ZDASASMSchurM}
Z_-(u_1,\ldots,u_n,1)\big|_{q=e^{i\pi/6}}=3^{-n(n-1)/2}\,
\Bigl(n\,s_{(n,n-1,n-1,\ldots,2,2,1,1)}(u_1^2,\u_1^2,\ldots,u_n^2,\u_n^2,1)\\
\textstyle-2\frac{d}{dx}s_{(n,n-1,n-1,\ldots,2,2,1,1)}(u_1^2,\u_1^2,\ldots,u_n^2,\u_n^2,x)\big|_{x=1}\Bigr).\end{multline}
\end{corollary}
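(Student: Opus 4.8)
The plan is to obtain these two identities by combining the parity decomposition~\eqref{ZPM}, specialized to $u_{n+1}=1$, with the Schur-function evaluations already proved at $q=e^{i\pi/6}$. Throughout, write $s$ for the Schur function indexed by the partition $(n,n-1,n-1,\ldots,2,2,1,1)$, set $U=(u_1^2,\u_1^2,\ldots,u_n^2,\u_n^2)$, and put $S=s(U,1)$ and $S'=\frac{d}{dx}s(U,x)\big|_{x=1}$. Setting $u_{n+1}=1$ in~\eqref{ZPM} gives
\[Z_\pm(u_1,\ldots,u_n,1)=\tfrac12\bigl(Z(u_1,\ldots,u_n,1)\pm(-1)^n\,Z(u_1,\ldots,u_n,-1)\bigr),\]
so it suffices to evaluate $Z(u_1,\ldots,u_n,1)$ and $Z(u_1,\ldots,u_n,-1)$ at $q=e^{i\pi/6}$. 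For the former, Corollary~\ref{ZDASASMSchurTh} gives $Z(u_1,\ldots,u_n,1)\big|_{q=e^{i\pi/6}}=3^{-n(n-1)/2}\,S$.

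The real work is the evaluation of $Z(u_1,\ldots,u_n,-1)$. For this I would substitute $u_{n+1}=v$ into Theorem~\ref{ZDASASMFullSchurTh} and place the two summands on its right-hand side over the common denominator $v+1$, using $\bar v^{\,n}/(\bar v+1)=v^{1-n}/(v+1)$; this writes $Z(u_1,\ldots,u_n,v)\big|_{q=e^{i\pi/6}}$ as $3^{-n(n-1)/2}\,g(v)/(v+1)$ with $g(v)=v^n\,s(U,\bar v^2)+v^{1-n}\,s(U,v^2)$. Since $Z$ is a Laurent polynomial in $u_{n+1}$, this rational expression must be regular at $v=-1$; indeed $\bar v^2=v^2=1$ there, so $g(-1)=\bigl((-1)^n+(-1)^{1-n}\bigr)S=0$, and hence $Z(u_1,\ldots,u_n,-1)\big|_{q=e^{i\pi/6}}=3^{-n(n-1)/2}\,g'(-1)$ by l'H\^opital's rule.

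It then remains to differentiate $g$. Using the chain rule with $\frac{d}{dv}\bar v^2=-2v^{-3}$ and $\frac{d}{dv}v^2=2v$, and evaluating the one-variable Schur-function derivatives at the last argument $1$, a short computation gives $g'(-1)=(-1)^n\bigl(4S'+(1-2n)S\bigr)$, so that $(-1)^n\,Z(u_1,\ldots,u_n,-1)\big|_{q=e^{i\pi/6}}=3^{-n(n-1)/2}\bigl(4S'+(1-2n)S\bigr)$. Substituting this together with $Z(u_1,\ldots,u_n,1)\big|_{q=e^{i\pi/6}}=3^{-n(n-1)/2}\,S$ into the displayed specialization of~\eqref{ZPM} gives $Z_+(u_1,\ldots,u_n,1)\big|_{q=e^{i\pi/6}}=3^{-n(n-1)/2}\bigl(2S'-(n-1)S\bigr)$ and $Z_-(u_1,\ldots,u_n,1)\big|_{q=e^{i\pi/6}}=3^{-n(n-1)/2}\bigl(nS-2S'\bigr)$, which are exactly~\eqref{ZDASASMSchurP} and~\eqref{ZDASASMSchurM}. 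Equivalently, one could bypass~\eqref{ZPM} and take $u_{n+1}\to1$ directly in the already-proved identities~\eqref{ZDASASMFullSchurP} and~\eqref{ZDASASMFullSchurM}, whose right-hand sides also have removable singularities at $u_{n+1}=1$; the l'H\^opital computation there is essentially the same.

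The only obstacle here is this limit evaluation: the two summands on the right-hand side of~\eqref{ZDASASMFullSchur} are individually singular at $v=-1$, so one must first combine them over $v+1$, verify that the numerator vanishes there, and then differentiate, keeping careful track of the signs $(-1)^n$ and of the chain-rule factors so that the two contributions involving $S'$ add rather than cancel. Everything else is routine, and nothing beyond Theorem~\ref{ZDASASMFullSchurTh} and Corollary~\ref{ZDASASMSchurTh} is needed; in particular~\eqref{Schurder} plays no role here, since the variables $u_1,\ldots,u_n$ are kept generic.
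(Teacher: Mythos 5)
Your proposal is correct, and it is essentially the paper's proof: the paper simply takes $u_{n+1}\to1$ in the already-established identities \eqref{ZDASASMFullSchurP} and \eqref{ZDASASMFullSchurM} (which are themselves \eqref{ZPM} applied to Theorem~\ref{ZDASASMFullSchurTh}) and applies L'H\^opital's rule, which is the alternative you note at the end; your primary route of applying \eqref{ZPM} at $u_{n+1}=\pm1$ and evaluating the removable singularity of \eqref{ZDASASMFullSchur} at $v=-1$ is the same computation reorganized. Your evaluation $g'(-1)=(-1)^n\bigl(4S'+(1-2n)S\bigr)$ and the resulting expressions for $Z_\pm$ check out.
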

\begin{proof}Take $u_{n+1}\rightarrow1$ in~\eqref{ZDASASMFullSchurP} and~\eqref{ZDASASMFullSchurM},
and apply L'H\^opital's rule.\end{proof}

\begin{corollary}\label{StroganovTh}
The numbers of $(2n+1)\times(2n+1)$ DASASMs with fixed central entry~$1$ and $-1$ are given by
\begin{equation}|\DASASM_+(2n+1)|=\frac{n+1}{2n+1}\prod_{i=0}^n\frac{(3i)!}{(n+i)!}\end{equation}
and
\begin{equation}|\DASASM_-(2n+1)|=\frac{n}{2n+1}\prod_{i=0}^n\frac{(3i)!}{(n+i)!},\end{equation}
and hence satisfy
\begin{equation}\label{StroganovEq}\frac{|\DASASM_-(2n+1)|}{|\DASASM_+(2n+1)|}=\frac{n}{n+1}.\end{equation}
\end{corollary}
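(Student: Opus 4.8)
The plan is to obtain the two product formulae by specializing the Schur-function expressions for $Z_\pm(u_1,\ldots,u_n,1)\big|_{q=e^{i\pi/6}}$ recorded in \eqref{ZDASASMSchurP} and \eqref{ZDASASMSchurM} to $u_1=\cdots=u_n=1$, and then reading off the enumerations via \eqref{ZASMenumPM}. Write $\lambda=(n,n-1,n-1,\ldots,2,2,1,1)$ and $k=2n+1$. First I would set $u_1=\cdots=u_n=1$ in the right-hand sides of \eqref{ZDASASMSchurP} and \eqref{ZDASASMSchurM}: each Schur function has $2n+1$ arguments which all become $1$, so $s_\lambda(\underbrace{1,\ldots,1}_{2n},1)=|\SSYT_\lambda(2n+1)|$ by \eqref{numSSYTSchur}, while the derivative term becomes $\frac{d}{dx}s_\lambda(\underbrace{1,\ldots,1}_{2n},x)\big|_{x=1}=\frac{|\lambda|}{2n+1}\,|\SSYT_\lambda(2n+1)|$ by \eqref{Schurder} with $k=2n+1$.

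Next I would compute $|\lambda|$. Since $\lambda$ consists of the single part $n$ together with two copies each of $1,2,\ldots,n-1$, we get $|\lambda|=n+2\bigl(1+2+\cdots+(n-1)\bigr)=n+n(n-1)=n^2$, hence $\tfrac{|\lambda|}{2n+1}=\tfrac{n^2}{2n+1}$. Substituting into \eqref{ZDASASMSchurP} and using \eqref{ZASMenumPM},
\[|\DASASM_+(2n+1)|=3^{-n(n-1)/2}\,|\SSYT_\lambda(2n+1)|\,\Bigl(\tfrac{2n^2}{2n+1}-(n-1)\Bigr)=\tfrac{n+1}{2n+1}\cdot 3^{-n(n-1)/2}\,|\SSYT_\lambda(2n+1)|,\]
since $2n^2-(n-1)(2n+1)=n+1$; likewise \eqref{ZDASASMSchurM} gives $|\DASASM_-(2n+1)|=\tfrac{n}{2n+1}\cdot 3^{-n(n-1)/2}\,|\SSYT_\lambda(2n+1)|$, using $n(2n+1)-2n^2=n$. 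By Corollary~\ref{numDASASMth} we have $3^{-n(n-1)/2}\,|\SSYT_\lambda(2n+1)|=\prod_{i=0}^n\tfrac{(3i)!}{(n+i)!}$, which yields the two claimed product formulae; dividing the second by the first gives \eqref{StroganovEq}.

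There is no real obstacle in this corollary: it is a direct specialization of Theorem~\ref{ZDASASMFullSchurTh} (through \eqref{ZDASASMSchurP}--\eqref{ZDASASMSchurM} and Corollary~\ref{numDASASMth}), and the only points needing care are the elementary evaluation $|\lambda|=n^2$ and the two short simplifications $2n^2-(n-1)(2n+1)=n+1$ and $n(2n+1)-2n^2=n$. One could instead bypass \eqref{ZDASASMSchurP}--\eqref{ZDASASMSchurM} and apply \eqref{ZPM} with $u_1=\cdots=u_n=1$ directly to \eqref{ZDASASMFullSchur}, taking the limit $u_{n+1}\to1$ by L'H\^opital's rule together with \eqref{Schurder}, but routing through the already-stated corollaries is cleanest. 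As a consistency check against Table~\ref{card}, for $n=2$ one has $|\SSYT_{(2,1,1)}(5)|=45$, so $|\DASASM(5)|=3^{-1}\cdot45=15$, $|\DASASM_+(5)|=\tfrac35\cdot15=9$ and $|\DASASM_-(5)|=\tfrac25\cdot15=6$, in agreement with the table.
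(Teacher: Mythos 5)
Your proposal is correct and follows exactly the paper's route: specialize \eqref{ZDASASMSchurP} and \eqref{ZDASASMSchurM} at $u_1=\cdots=u_n=1$, apply \eqref{ZASMenumPM}, \eqref{numSSYTSchur} and \eqref{Schurder}, and invoke the second equality of \eqref{numDASASM}. The arithmetic ($|\lambda|=n^2$, $2n^2-(n-1)(2n+1)=n+1$, $n(2n+1)-2n^2=n$) and the $n=2$ check against Table~\ref{card} are all right.
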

\begin{proof}
Set $u_1=\ldots=u_n=1$ in~\eqref{ZDASASMSchurP} and~\eqref{ZDASASMSchurM}, and apply~\eqref{ZASMenumPM},~\eqref{numSSYTSchur},~\eqref{Schurder}
and the second equality of~\eqref{numDASASM}.
\end{proof}
As indicated in Section~\ref{intro},
the relation~\eqref{StroganovEq} was conjectured by Stroganov~\cite[Conj.~2]{Str08}.

\section{Proofs}\label{Proofs}
In this section, full proofs of Theorems~\ref{DASASMFulldetTh} and~\ref{ZDASASMFullSchurTh},
and sketches of alternative proofs of Theorem~\ref{DASASMFulldetTh} and Corollary~\ref{DASASMdetCoroll}, are given.
Preliminary results
are stated or obtained in Sections~\ref{SimpProp}--\ref{Spec},
while the main steps of the proofs appear in Sections~\ref{DASASMFulldetThPr}--\ref{ZDASASMFullSchurPr}.

\subsection{Simple properties of bulk and boundary weights}\label{SimpProp}
The bulk and boundary weights, as given in Table~\ref{weights}, can immediately be seen to satisfy certain simple properties.
Some examples are as follows, where the notation will be explained at the end of the list.
\begin{list}{$\bullet$}{\setlength{\topsep}{0.8mm}\setlength{\labelwidth}{2mm}\setlength{\leftmargin}{6mm}}
\item Invariance under diagonal reflection or arrow reversal,
\begin{gather}\notag\W{a}{b}{c}{d}{u}=\W{d}{c}{b}{a}{u}=\W{b}{a}{d}{c}{u}=\W{\tilde{a}}{\tilde{b}}{\tilde{c}}{\tilde{d}}{u},\\[1.5mm]
\label{Wref}\WL{a}{b}{u}=\WL{b}{a}{u}=\WL{\tilde{a}}{\tilde{b}}{u},\quad \WR{a}{b}{u}=\WR{b}{a}{u}=\WR{\tilde{a}}{\tilde{b}}{u}.\end{gather}
\item Invariance under simultaneous vertical reflection and replacement of $u$ with $\u$,
\begin{equation}\label{Wrep}\W{a}{b}{c}{d}{u}=\W{c}{b}{a}{d}{\u},\qquad\WL{a}{b}{u}=\WR{b}{a}{\u}.\end{equation}
\item Reduction of the bulk weights at $q^{\pm2}$ or boundary weights at $q^{\pm1}$,
\begin{equation}\label{Wred}\W{a}{b}{c}{d}{q^2}=\W{c}{b}{a}{d}{\q^2}=\delta_{a\tilde{d}}\,\delta_{b\tilde{c}},\qquad
\WL{a}{b}{\q}=\WR{b}{a}{q}=\delta_{a\tilde{b}}.\end{equation}
\end{list}

These equations are satisfied for all edge orientations $a$, $b$, $c$ and $d$ such that the six-vertex rule is satisfied
at degree~4 vertices, with an orientation being taken as in or out, with respect to the indicated endpoint of the edge.
Also, $\tilde{a}$ denotes the reversal of edge orientation~$a$, and $\delta$ is the Kronecker delta.

As examples, the $a=b=\,$in and $c=d=\,$out cases of~\eqref{Wrep} and~\eqref{Wred} are
$W(\Wi,u)=W(\Wiii,\u)$, $W(\WLii,u)=W(\WRii,\u)$, $W(\Wi,q^2)=W(\Wiii,\q^2)=1$
and $W(\WLii,\q)=W(\WRii,q)=0$.

\subsection{Local relations for bulk and boundary weights}\label{LocRel}
The bulk and boundary weights, as given in Table~\ref{weights}, also satisfy certain local relations.
The relations relevant to this paper are as follows, 
where the notation will again be explained at the end of the list.
\begin{list}{$\bullet$}{\setlength{\topsep}{0.8mm}\setlength{\labelwidth}{2mm}\setlength{\leftmargin}{4mm}}
\item Vertical and horizontal forms of the Yang--Baxter equation (VYBE and HYBE),
\psset{unit=1.5mm}
\begin{gather}\notag\pspicture(0,-1)(27,21)
\psline[linewidth=0.8pt,linecolor=green](0,6)(20,6)\psline[linewidth=0.8pt,linecolor=blue,linearc=2](5,19)(5,17)(15,11)(15,1)
\psline[linewidth=0.8pt,linecolor=red,linearc=2](15,19)(15,17)(5,11)(5,1)
\multirput(5,6)(10,0){2}{$\ss\bullet$}\rput(10,14){$\ss\bullet$}
\rput(9.8,11.6){$\ss q^2{\color{blue}u}\bar{\color{red}v}$}
\rput[tr](4.5,5.2){$\ss\color{red}v\color{green}w$}\rput[tr](14.5,5.2){$\ss\color{blue}u\color{green}w$}
\rput[b](5,19.5){$\ss\color{blue}a_1$}\rput[b](15,19.5){$\ss\color{red}a_2$}
\rput[t](15,0.5){$\ss\color{blue}b_1$}\rput[t](5,0.5){$\ss\color{red}b_2$}
\rput[r](-0.5,6){$\ss\color{green}a_3$}\rput[l](20.5,6){$\ss\color{green}b_3$}\rput(27,10){$=$}
\endpspicture\pspicture(-7,-1)(20,21)
\psline[linewidth=0.8pt,linecolor=green](0,14)(20,14)\psline[linewidth=0.8pt,linecolor=blue,linearc=2](5,19)(5,9)(15,3)(15,1)
\psline[linewidth=0.8pt,linecolor=red,linearc=2](15,19)(15,9)(5,3)(5,1)
\multirput(5,14)(10,0){2}{$\ss\bullet$}\rput(10,6){$\ss\bullet$}
\rput(9.8,3.6){$\ss q^2{\color{blue}u}\bar{\color{red}v}$}
\rput[tr](4.5,13.2){$\ss\color{blue}u\color{green}w$}\rput[tr](14.5,13.2){$\ss\color{red}v\color{green}w$}
\rput[b](5,19.5){$\ss\color{blue}a_1$}\rput[b](15,19.5){$\ss\color{red}a_2$}
\rput[t](15,0.5){$\ss\color{blue}b_1$}\rput[t](5,0.5){$\ss\color{red}b_2$}
\rput[r](-0.5,14){$\ss\color{green}a_3$}\rput[l](20.5,14){$\ss\color{green}b_3$}\endpspicture\\
\label{YBE}\raisebox{-17mm}{\pspicture(-2,-2)(26,22)\rput[l](-10,10){and}
\psline[linewidth=0.8pt,linecolor=green](14,0)(14,20)\psline[linewidth=0.8pt,linecolor=blue,linearc=2](1,15)(3,15)(9,5)(19,5)
\psline[linewidth=0.8pt,linecolor=red,linearc=2](1,5)(3,5)(9,15)(19,15)
\multirput(14,5)(0,10){2}{$\ss\bullet$}\rput(6,10){$\ss\bullet$}
\rput[r](5,10){$\ss q^2{\color{blue}u}\bar{\color{red}v}$}
\rput[tr](13.5,14.2){$\ss\color{red}v\color{green}w$}\rput[tr](13.5,4.2){$\ss\color{blue}u\color{green}w$}
\rput[r](0.5,15){$\ss\color{blue}a_1$}\rput[r](0.5,5){$\ss\color{red}a_2$}\rput[t](14,-0.5){$\ss\color{green}a_3$}
\rput[l](19.5,5){$\ss\color{blue}b_1$}\rput[l](19.5,15){$\ss\color{red}b_2$}\rput[b](14,20.5){$\ss\color{green}b_3$}\rput(26,10){$=$}\endpspicture
\pspicture(-6,-2)(22,22)
\psline[linewidth=0.8pt,linecolor=green](6,0)(6,20)\psline[linewidth=0.8pt,linecolor=red,linearc=2](1,5)(11,5)(17,15)(19,15)
\psline[linewidth=0.8pt,linecolor=blue,linearc=2](1,15)(11,15)(17,5)(19,5)
\multirput(6,5)(0,10){2}{$\ss\bullet$}\rput(14,10){$\ss\bullet$}
\rput[r](13,10){$\ss q^2{\color{blue}u}\bar{\color{red}v}$}
\rput[tr](5.5,14.2){$\ss\color{blue}u\color{green}w$}\rput[tr](5.5,4.2){$\ss\color{red}v\color{green}w$}
\rput[r](0.5,15){$\ss\color{blue}a_1$}\rput[r](0.5,5){$\ss\color{red}a_2$}\rput[t](6,-0.5){$\ss\color{green}a_3$}
\rput[l](19.5,5){$\ss\color{blue}b_1$}\rput[l](19.5,15){$\ss\color{red}b_2$}\rput[b](6,20.5){$\ss\color{green}b_3$}\rput(23,8.9){.}\endpspicture}\end{gather}
\item Left and right forms of the reflection (or boundary Yang--Baxter) equation (LRE and RRE),
\psset{unit=1.45mm}
\begin{align}\notag\raisebox{-16.5mm}{\pspicture(-4,-2)(19,25)\psline[linewidth=0.8pt,linecolor=blue,linearc=2](0,23)(0,21)(10,15)(10,0)
\psline[linewidth=0.8pt,linecolor=blue](10,0)(15,0)
\psline[linewidth=0.8pt,linecolor=red,linearc=2](10,23)(10,21)(0,15)(0,10)\psline[linewidth=0.8pt,linecolor=red](0,10)(15,10)
\multirput(10,0)(-10,10){2}{$\ss\bullet$}\rput(10,10){$\ss\bullet$}\rput(5,18){$\ss\bullet$}
\rput(4.9,15.6){$\ss q^2{\color{blue}u}\bar{\color{red}v}$}\rput[tr](9.5,9.2){$\ss\color{blue}u\color{red}v$}
\rput[tr](-0.3,9.7){$\ss\color{red}v$}\rput[tr](9.7,-0.3){$\ss\color{blue}u$}
\rput[b](0,23.5){$\ss\color{blue}a_1$}\rput[b](10,23.5){$\ss\color{red}a_2$}
\rput[l](15.5,0){$\ss\color{blue}b_1$}\rput[l](15.5,10){$\ss\color{red}b_2$}\endpspicture}
&=\raisebox{-16.5mm}{\pspicture(-4,-2)(27,25)\psline[linewidth=0.8pt,linecolor=blue](0,15)(0,10)
\psline[linewidth=0.8pt,linecolor=blue,linearc=2](0,10)(15,10)(21,0)(23,0)
\psline[linewidth=0.8pt,linecolor=red](10,15)(10,0)\psline[linewidth=0.8pt,linecolor=red,linearc=2](10,0)(15,0)(21,10)(23,10)
\multirput(0,10)(10,-10){2}{$\ss\bullet$}\rput(10,10){$\ss\bullet$}\rput(18,5){$\ss\bullet$}
\rput(14.7,5){$\ss q^2{\color{blue}u}\bar{\color{red}v}$}\rput[tr](9.5,9.2){$\ss\color{blue}u\color{red}v$}
\rput[tr](-0.3,9.7){$\ss\color{blue}u$}\rput[tr](9.7,-0.3){$\ss\color{red}v$}
\rput[b](0,15.5){$\ss\color{blue}a_1$}\rput[b](10,15.5){$\ss\color{red}a_2$}
\rput[l](23.5,0){$\ss\color{blue}b_1$}\rput[l](23.5,10){$\ss\color{red}b_2$}\endpspicture}\\
\label{RE}\text{and \ \ }\raisebox{-16.5mm}{\pspicture(-4,-2)(27,25)
\psline[linewidth=0.8pt,linecolor=blue,linearc=2](0,10)(2,10)(8,0)(13,0)\psline[linewidth=0.8pt,linecolor=blue](13,0)(13,15)
\psline[linewidth=0.8pt,linecolor=red,linearc=2](0,0)(2,0)(8,10)(23,10)\psline[linewidth=0.8pt,linecolor=red](23,10)(23,15)
\multirput(13,0)(10,10){2}{$\ss\bullet$}\rput(5,5){$\ss\bullet$}\rput(13,10){$\ss\bullet$}
\rput(1.7,5){$\ss q^2{\color{blue}u}\bar{\color{red}v}$}\rput[tr](12.5,9.2){$\ss\color{blue}u\color{red}v$}
\rput[tl](13.3,-0.3){$\ss\color{blue}u$}\rput[tl](23.3,9.7){$\ss\color{red}v$}
\rput[r](-0.5,10){$\ss\color{blue}a_1$}\rput[r](-0.5,0){$\ss\color{red}a_2$}
\rput[b](13,15.5){$\ss\color{blue}b_1$}\rput[b](23,15.5){$\ss\color{red}b_2$}\endpspicture}
&=\raisebox{-16.5mm}{\pspicture(-4,-2)(19,25)\psline[linewidth=0.8pt,linecolor=blue](0,10)(15,10)
\psline[linewidth=0.8pt,linecolor=blue,linearc=2](15,10)(15,15)(5,21)(5,23)
\psline[linewidth=0.8pt,linecolor=red](0,0)(5,0)\psline[linewidth=0.8pt,linecolor=red,linearc=2](5,0)(5,15)(15,21)(15,23)
\multirput(5,0)(10,10){2}{$\ss\bullet$}\rput(5,10){$\ss\bullet$}\rput(10,18){$\ss\bullet$}
\rput(9.9,15.6){$\ss q^2{\color{blue}u}\bar{\color{red}v}$}\rput[tr](4.5,9.2){$\ss\color{blue}u\color{red}v$}
\rput[tl](5.3,-0.3){$\ss\color{red}v$}\rput[tl](15.3,9.7){$\ss\color{blue}u$}
\rput[r](-0.5,10){$\ss\color{blue}a_1$}\rput[r](-0.5,0){$\ss\color{red}a_2$}
\rput[b](5,23.5){$\ss\color{blue}b_1$}\rput[b](15,23.5){$\ss\color{red}b_2$}\rput(17.2,9){.}\endpspicture}\end{align}
\item Left and right forms of the boundary unitarity equation (LBUE and RBUE),
\psset{unit=1.5mm}
\begin{equation}\label{BUE}\raisebox{-8.1mm}{
\pspicture(-3,-1)(52,11)\psline[linewidth=0.8pt,linecolor=blue,linearc=2](0,5)(5,5)(5,0)
\psline[linewidth=0.8pt,linecolor=blue](0,10)(0,5)\psline[linewidth=0.8pt,linecolor=blue](5,0)(10,0)
\multirput(0,5)(5,-5){2}{$\ss\bullet$}
\rput[tr](-0.7,5.2){$\ss\q{\color{blue}u}$}\rput[tr](4.3,0.1){$\ss\q\bar{\color{blue}u}$}
\rput[b](0,10.5){$\ss\color{blue}a$}\rput[l](10.5,0){$\ss\color{blue}b$}\rput[r](30,5){$=\;
\frac{\sigma(q{\color{blue}u})\,\sigma(q\bar{\color{blue}u})}{\sigma(q)^2}\,\delta_{{\color{blue}a}\tilde{{\color{blue}b}}}$}\rput(41,5){and}\endpspicture
\pspicture(-3,-1)(33,11)\psline[linewidth=0.8pt,linecolor=blue,linearc=2](5,0)(5,5)(10,5)
\psline[linewidth=0.8pt,linecolor=blue](0,0)(5,0)\psline[linewidth=0.8pt,linecolor=blue](10,5)(10,10)
\multirput(5,0)(5,5){2}{$\ss\bullet$}
\rput[tl](5.3,-0.4){$\ss q{\color{blue}u}$}\rput[tl](10.3,4.9){$\ss q\bar{\color{blue}u}$}
\rput[r](-0.5,0){$\ss\color{blue}a$}\rput[b](10,10.5){$\ss\color{blue}b$}\rput[r](33,5){$=\;
\frac{\sigma(q{\color{blue}u})\,\sigma(q\bar{\color{blue}u})}{\sigma(q)^2}\,\delta_{{\color{blue}a}\tilde{{\color{blue}b}}}$.}\endpspicture}
\end{equation}
\end{list}

In these equations, each graph contains external edges, for which only one of the endpoints is indicated,
and internal edges, for which both endpoints are indicated.
Each equation holds for all orientations, $a_1$, $b_1$, \ldots, of the external edges, with (as in Section~\ref{SimpProp})
an orientation being taken as in or out, with respect to the indicated endpoint,
and $\tilde{a}$ denoting the reversal of orientation~$a$.  For a particular
orientation of the external edges, each graph denotes a sum,
over all orientations of the internal edges which satisfy the six-vertex rule at each degree~4 vertex,
of products of weights for each degree~2 and degree~4 vertex shown. If the edges incident to a
degree~4 vertex appear horizontally and vertically, with an associated parameter~$u$ to the left of and below the vertex, then
the weight of the vertex is $\W{a}{b}{c}{d}{u}$, for orientations $a$,~$b$,~$c$ and~$d$ of the edges incident left, below, 
right and above the vertex, respectively.
If the edges incident to a degree~4 vertex appear diagonally, then
these edges and the associated parameter should be rotated so that the parameter appears to the left of and below the vertex, with
the weight then being determined as previously.
For degree~2 vertices, the incident edges always appear in the same configurations as used in the notation for the boundary weights,
with the weights being determined accordingly.  The color coding indicates that the parameters $u$, $v$ and $w$ can be naturally associated with
certain edges.

As an example, the $a_1=a_2=b_1=$\,in and $b_2=$\,out case of the right form of the reflection equation~\eqref{RE} is
\begin{multline*}W(\Wi,q^2u\bar{v})W(\WRii,u)W(\Wiv,uv)W(\WRiii,v)=
W(\WRiii,v)W(\Wi,uv)W(\WRii,u)W(\Wiv,q^2u\bar{v})+\\
W(\WRii,v)W(\Wiv,uv)W(\WRiii,u)W(\Wvi,q^2u\bar{v})+
W(\WRii,v)W(\Wv,uv)W(\WRiv,u)W(\Wiv,q^2u\bar{v}).\end{multline*}

The local relations~\eqref{YBE}--\eqref{BUE} can be proved by directly verifying that
each equation, for each orientation of external edges, is either trivial or reduces to a simple identity
satisfied by the rational functions of Table~\ref{weights}.
Due to the symmetry properties of the weights identified in Section~\ref{SimpProp}, many of the different cases in this verification
are equivalent.  For example, invariance under arrow reversal~\eqref{Wref} implies that
cases of an equation which are related by reversal of all external edge orientations are equivalent,
invariance under diagonal reflection~\eqref{Wref} implies that the vertical and horizontal forms
of the Yang--Baxter equation~\eqref{YBE} are equivalent, and the properties of vertical reflection~\eqref{Wrep} imply
that the left and right forms of the reflection equation~\eqref{RE} are equivalent.

There is an extensive literature regarding the local relations~\eqref{YBE}--\eqref{BUE}.
For some general information regarding the Yang--Baxter equation, as applied to the six-vertex model, see, for example, Baxter~\cite[pp.~187--190]{Bax82}.
The reflection equation was introduced (and applied to six-vertex model bulk weights) by Cherednik~\cite[Eq.~(10)]{Che84},
with important further results being obtained by Sklyanin~\cite{Skl88}.
The most general boundary weights which satisfy the reflection equation for standard
six-vertex model bulk weights were obtained, independently, by de Vega and
Gonz\'{a}lez-Ruiz~\cite[Eq.~(15)]{DevGon93}, and Ghoshal and Zamolodchikov~\cite[Eq.~(5.12)]{GhoZam94}.
The boundary weights used in this paper are a special case of these general weights, which were chosen
for their property of all having value~1 at $u=1$, as in~\eqref{comb},
thereby enabling the straight enumeration of odd-order DASASMs, as in~\eqref{ZASMenum} and~\eqref{ZASMenumPM}.  
It can be shown that,
up to unimportant normalization, these are the only case of the general boundary weights
which have this property.  Finally, note that two other special cases of the general six-vertex model boundary weights
were used by Kuperberg~\cite[Fig.~15]{Kup02}, to study classes of ASMs including VSASMs, VHSASMs, OSASMs and OOSASMs.

\subsection{Properties of the odd-order DASASM partition function}\label{GenProp}
Some important properties of the odd-order DASASM partition function $Z(u_1,\ldots,u_{n+1})$ will now be identified.

\begin{proposition}\label{recipth}
The odd-order DASASM partition function satisfies
\begin{equation}
\label{recip}Z(\u_1,\ldots,\u_{n+1})=Z(u_1,\ldots,u_{n+1}).
\end{equation}
\end{proposition}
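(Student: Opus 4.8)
The plan is to exploit the left--right (vertical) reflection symmetry of the grid graph $\mathcal{T}_n$ together with the weight symmetry~\eqref{Wrep}. Vertical reflection sends column $j$ of $\mathcal{T}_n$ to column $2n+2-j$: it interchanges the left boundary vertex $(i,i)$ with the right boundary vertex $(i,2n+2-i)$, carries the bulk vertex $(i,j)$ to $(i,2n+2-j)$, and fixes the central column, the top vertices and the bottom vertex. The key point is that the parameter $u_i\,u_{\min(j,2n+2-j)}$ attached to the bulk vertex $(i,j)$ depends on $j$ only through $\min(j,2n+2-j)$, which is invariant under $j\mapsto 2n+2-j$, and that the parameter $u_i$ attached to each of the two boundary vertices in row $i$ is the same.

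\textbf{Reflection of configurations.} First I would define, for a configuration $C$ of the six-vertex model on $\mathcal{T}_n$, its vertical reflection $C^{\mathrm v}$, obtained by reflecting the whole picture left-to-right. Reflection preserves the upward direction of the top edges and the six-vertex rule (two in, two out) at each bulk vertex, and, since $\mathcal{T}_n$ is itself symmetric under this reflection, it sends the allowed local configurations at each vertex to the allowed local configurations at the reflected vertex. Hence $C\mapsto C^{\mathrm v}$ is a well-defined involution, and in particular a bijection, on the set of configurations of the six-vertex model on $\mathcal{T}_n$.

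\textbf{Matching of weights.} Next I would compare, vertex by vertex, the weight of $C$ with parameters $(\u_1,\ldots,\u_{n+1})$ to the weight of $C^{\mathrm v}$ with parameters $(u_1,\ldots,u_{n+1})$. At a bulk vertex $(i,j)$ the relevant factor of the first weight is $W(C_{ij},\u_i\u_{\min(j,2n+2-j)})$; the local configuration of $C^{\mathrm v}$ at the reflected vertex $(i,2n+2-j)$ is the vertical reflection of $C_{ij}$, so by the bulk case of~\eqref{Wrep} its weight at parameter $u_i\,u_{\min(j,2n+2-j)}$ equals $W(C_{ij},\u_i\u_{\min(j,2n+2-j)})$. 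Likewise, by the boundary case of~\eqref{Wrep}, the left boundary factor $W(C_{ii},\u_i)$ equals the right boundary weight of $C^{\mathrm v}$ at $(i,2n+2-i)$ with parameter $u_i$, and symmetrically the right boundary factor of $C$ at $\u_i$ matches the left boundary weight of $C^{\mathrm v}$ at $u_i$. Reindexing the product over $j=i+1,\ldots,2n+1-i$ by $j\mapsto 2n+2-j$ (which preserves this range) and interchanging the two boundary factors in each row then shows that the weight of $C$ at $(\u_1,\ldots,\u_{n+1})$ equals the weight of $C^{\mathrm v}$ at $(u_1,\ldots,u_{n+1})$. Summing over all $C$ and using that $C\mapsto C^{\mathrm v}$ is a bijection yields $Z(\u_1,\ldots,\u_{n+1})=Z(u_1,\ldots,u_{n+1})$. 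The only real obstacle is the bookkeeping of edge orientations under reflection and the correct pairing of the interchanged boundary vertices; once the notational conventions for $W$ are pinned down, every step is a direct application of~\eqref{Wrep}, and notably the argument does not use the later determinantal formula of Theorem~\ref{DASASMFulldetTh}.
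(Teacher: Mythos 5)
Your proposal is correct and is essentially the paper's own proof: both use the involution on configurations given by reflection in the central vertical line of $\mathcal{T}_n$, combined with the weight identities~\eqref{Wrep}, noting that the bulk parameter $u_i\,u_{\min(j,2n+2-j)}$ and the boundary parameter $u_i$ are invariant under the reflection. You have simply spelled out the vertex-by-vertex bookkeeping that the paper leaves implicit.
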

\begin{proof}
First observe that an involution on the set of configurations of the six-vertex model on~$\mathcal{T}_n$ is provided by reflection
of each configuration in the central vertical line of~$\mathcal{T}_n$.  Applying this involution to each configuration in the sum~\eqref{Z}
for $Z(u_1,\ldots,u_{n+1})$, and using the properties~\eqref{Wrep} of the bulk and boundary weights under vertical reflection,
leads to the required result.
\end{proof}

\begin{proposition}\label{degth}
The odd-order DASASM partition function $Z(u_1,\ldots,u_{n+1})$ is an even Laurent polynomial in $u_i$ of lower degree at least $-2n$ and
upper degree at most $2n$,
for each $i=1,\ldots,n$, and a Laurent polynomial in $u_{n+1}$ of lower degree at least~$-n$ and upper degree at most $n$.
\end{proposition}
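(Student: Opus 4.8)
The plan is to read the $u_i$-dependence of $Z$ directly off the sum~\eqref{Z}, using the hook paths of~$\mathcal{T}_n$ visible in~\eqref{col}. For $i\le n$, let $\mathcal{P}_i$ be the path in~$\mathcal{T}_n$ running from the top vertex $(0,i)$ down column~$i$ to the left boundary vertex $(i,i)$, then rightward along row~$i$ to the right boundary vertex $(i,2n+2-i)$, and then up column $2n+2-i$ to the top vertex $(0,2n+2-i)$; and let $\mathcal{P}_{n+1}$ be the central column. The assignment of parameters described before~\eqref{col} shows that the weight of a vertex of~$\mathcal{T}_n$ depends on $u_i$ exactly when that vertex lies on $\mathcal{P}_i$. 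For $i\le n$ the path $\mathcal{P}_i$ has precisely $2n+1$ such vertices --- the two boundary corners $(i,i)$ and $(i,2n+2-i)$, the $2n+1-2i$ bulk vertices of row~$i$ strictly between them, the $i-1$ bulk vertices of column~$i$ strictly above $(i,i)$, and the $i-1$ bulk vertices of column $2n+2-i$ strictly above $(i,2n+2-i)$ --- while the $u_{n+1}$-dependent weights are those of the $n$ bulk vertices in the central column. Since the only denominators in Table~\ref{weights} are $\sigma(q)$ and $\sigma(q^4)$, which are free of all $u_i$, and since each weight, viewed as a function of one $u_i$, is either the constant~$1$ or of the form $\sigma(q^a m)/\sigma(q^b)$ with $m$ a monomial containing $u_i$ to the power $\pm1$ (hence a Laurent polynomial in $u_i$ of lower degree $\ge-1$ and upper degree $\le1$), each term of~\eqref{Z}, and hence $Z$, is a Laurent polynomial in each $u_i$: of lower degree at least $-(2n+1)$ and upper degree at most $2n+1$ for $i\le n$, and of lower degree at least $-n$ and upper degree at most $n$ for $u_{n+1}$. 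This already gives the asserted bounds for $u_{n+1}$.

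For $i\le n$ the remaining refinements --- the bounds $\pm2n$ in place of $\pm(2n+1)$, and evenness --- both follow from the claim that, for every six-vertex configuration $C$ on~$\mathcal{T}_n$, the number of vertices on $\mathcal{P}_i$ whose weight is not~$1$ is even. Indeed, in the corresponding term of~\eqref{Z} the $u_i$-dependent part is then a product of an even number of factors, each of the form $\sigma(q^a m)/\sigma(q^b)=\alpha u_i+\beta u_i^{-1}$; such a product involves only even powers of $u_i$, and, the number of factors being even and at most $2n+1$, hence at most $2n$, it lies between $u_i^{-2n}$ and $u_i^{2n}$. Summing over $C$, $Z$ is then an even Laurent polynomial in $u_i$ with the stated bounds. (Equivalently: since $\sigma$ is odd, each such factor changes sign under $u_i\mapsto-u_i$, so an even number of them leaves each term of~\eqref{Z} invariant, whence $Z$ is even in $u_i$; combined with the crude bound $2n+1$ this forces the bound $2n$.)

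To prove the claim, orient $\mathcal{P}_i$ as a path from its $(0,i)$-end to its $(0,2n+2-i)$-end, and label each of its $2n+2$ edges by $+1$ if $C$ directs it in agreement with this orientation, and by $-1$ otherwise. The two outermost edges are the top edges at $(0,i)$ and at $(0,2n+2-i)$, both directed upwards; with respect to the chosen orientation the first runs backwards and the second forwards, so they receive opposite labels, and therefore the number of label changes among the $2n+1$ successive pairs of edges of $\mathcal{P}_i$ --- one pair meeting at each of its $2n+1$ vertices --- is odd. A direct inspection of the admissible local configurations, using the arrow patterns underlying Table~\ref{weights} and the rule~\eqref{bij}, then shows that at a degree-$4$ vertex of $\mathcal{P}_i$ the two edges of $\mathcal{P}_i$ meeting there are directed one inward and one outward --- so the vertex is of one of the four bulk types of weight~$\ne1$ --- precisely when no label change occurs there, whereas at each of the two degree-$2$ corners $(i,i)$ and $(i,2n+2-i)$ the weight is $\ne1$ precisely when a label change does occur there. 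Hence the number of weight-$1$ vertices on $\mathcal{P}_i$ differs from the odd number of label changes by an even amount, so it is odd, and the number of weight-$(\ne1)$ vertices is $2n+1$ minus an odd number, hence even.

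The main obstacle is this last local analysis: one must carefully match, for each admissible arrow configuration at a bulk vertex and at each of the two boundary corners of $\mathcal{P}_i$, the value of the weight against the presence or absence of a label change --- in particular noting that the two corners behave oppositely to the bulk vertices --- while everything else is routine bookkeeping with the explicit weights of Table~\ref{weights}.
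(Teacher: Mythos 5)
Your proof is correct, and its overall structure coincides with the paper's: both isolate the $2n+1$ weights along the hook through column~$i$, row~$i$ and column $2n+2-i$ as the only $u_i$-dependent factors of each term of~\eqref{Z}, observe that each such factor is either the constant $1$ or an odd Laurent polynomial in $u_i$ with degrees $-1$ and $1$, and reduce everything to the claim that the number of nonconstant factors is even (hence at most $2n$); the $u_{n+1}$ bound is handled identically in both, via the $n$ central-column bulk weights. The one real difference is how that parity claim is justified. The paper reads it off the bijection~\eqref{bij}: the nonconstant weights sit exactly at the positions where the associated odd DASASM triangle has a $0$ along the sequence~\eqref{path}, and since the nonzero entries of that sequence alternate in sign and sum to~$1$, there is an odd number of them and therefore an even number of zeros. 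You instead prove the parity directly at the six-vertex level, orienting the hook, labelling each edge by whether its arrow agrees with that orientation, and matching label changes against weights (correctly noting that the two degree-2 corners behave oppositely to the bulk vertices); the two fixed upward top edges then force an odd number of label changes, which gives the claim. Your route is self-contained -- it does not lean on the bijection or the alternating-sign property -- at the price of the local case analysis at bulk and corner vertices; the paper's version is shorter because that combinatorial content is already packaged into the definition of odd DASASM triangles. Both are valid, and the bookkeeping in your version (the count $2+(2n+1-2i)+2(i-1)=2n+1$, the parity bookkeeping at corners versus bulk) checks out.
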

Note that the definitions of degrees being used in this paper are that 
a Laurent polynomial $\sum_{i=m}^na_ix^i$ in $x$, with $m\le n$ and $a_m,a_n\ne0$,
has lower and upper degrees $m$ and $n$, respectively.
\begin{proof}
Consider a configuration $C$ of the six-vertex model on~$\mathcal{T}_n$, and $i\in\{1,\ldots,n\}$.
The $C$-dependent term in the sum~\eqref{Z} for $Z(u_1,\ldots,u_{n+1})$
consists of a product of $n(n+2)$ weights, among which
there are~$2n-1$ bulk weights, one left boundary weight and one right boundary weight that depend on~$u_i$.
Under the bijection~\eqref{bij} from $C$ to an odd DASASM triangle, the local configurations which determine
these $2n+1$ $u_i$-dependent weights correspond to the entries of the triangle in the sequence~\eqref{path}.
Also, it follows from the bijection~\eqref{bij}, the explicit weights in Table~\ref{weights}, and the
form of the $C$-dependent term in~\eqref{Z}, that each nonzero entry in~\eqref{path} is associated with a weight of~$1$, and
each zero entry in~\eqref{path} is associated with a weight which is an odd Laurent polynomial in~$u_i$ of lower degree~$-1$ and upper degree~$1$.
The properties of the sequence~\eqref{path} imply that its number of zero entries is even and at most~$2n$.
Therefore, the $C$-dependent term in~\eqref{Z}
is an even Laurent polynomial in $u_i$ of lower degree at least $-2n$ and
upper degree at most $2n$, from which the required result for $u_i$ follows.

The result for $u_{n+1}$ can be proved similarly.
\end{proof}

\begin{proposition}\label{symm}
The odd-order DASASM partition function $Z(u_1,\ldots,u_{n+1})$ is symmetric in $u_1,\ldots,u_n$.
\end{proposition}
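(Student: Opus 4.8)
\emph{Proof proposal.}
The plan is to prove this by the standard \emph{train argument}, exploiting that the bulk and boundary weights satisfy the Yang--Baxter and reflection equations~\eqref{YBE}--\eqref{RE}. Since adjacent transpositions generate the symmetric group~$S_n$, it suffices to show that $Z(u_1,\ldots,u_{n+1})$ is unchanged under the interchange of $u_i$ and $u_{i+1}$ for each fixed $i\in\{1,\ldots,n-1\}$. It is convenient to associate with each $u_k$, $k=1,\ldots,n$, its \emph{rapidity line} $L_k$ in~$\mathcal{T}_n$, that is, the path through column~$k$, row~$k$ and column~$2n+2-k$, so that the parameter at each vertex of~$\mathcal{T}_n$ is the product of the $u_k$ over the lines $L_k$ through it, and so that $u_{n+1}$ is carried by the central column. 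The lines $L_i$ and $L_{i+1}$ are nested: $L_i$ reflects off the left and right boundaries at the degree-$2$ vertices $(i,i)$ and $(i,2n+2-i)$, $L_{i+1}$ reflects at $(i+1,i+1)$ and $(i+1,2n+1-i)$, and the two lines cross at the bulk vertices $(i,i+1)$ and $(i,2n+1-i)$.

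First I would form an augmented graph by inserting an extra four-valent crossing vertex~$X$, with associated parameter $q^2u_i\bar u_{i+1}$, on the top edges of columns~$i$ and~$i+1$ so that it crosses them. Because every top edge of~$\mathcal{T}_n$ is directed upwards, the six-vertex rule forces the local configuration at~$X$; hence the partition function $\widehat Z$ of the augmented graph is $g(q^2u_i\bar u_{i+1})\,Z(u_1,\ldots,u_{n+1})$, where $g$ is the bulk weight of that forced configuration, a rational function not vanishing identically. The key step is then to transport~$X$ through the augmented graph along the ``corridor'' separating $L_i$ from $L_{i+1}$: downwards between columns~$i$ and~$i+1$, past the horizontal parts of $L_1,\ldots,L_{i-1}$ in turn, each move an instance of~\eqref{YBE}; past the left boundary, using the left form of the reflection equation in~\eqref{RE} to move~$X$ past the reflections at $(i,i)$ and $(i+1,i+1)$ and the crossing $(i,i+1)$; rightwards between rows~$i$ and~$i+1$, past the vertical lines in columns $i+2,\ldots,2n-i$ (including the central line $L_{n+1}$), again by~\eqref{YBE}; past the right boundary, using the right form of~\eqref{RE} at the reflections $(i,2n+2-i)$ and $(i+1,2n+1-i)$ and the crossing $(i,2n+1-i)$; and finally upwards between columns $2n+1-i$ and $2n+2-i$, past the horizontal parts of $L_{i-1},\ldots,L_1$ by~\eqref{YBE}, until~$X$ returns to the top, now crossing the top edges of columns $2n+1-i$ and $2n+2-i$. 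Each such move re-associates which of $L_i$, $L_{i+1}$ occupies which edge near~$X$, and the cumulative effect of the whole circuit is to replace the parameter assignment of~$\mathcal{T}_n$ by the one obtained by interchanging $u_i$ and $u_{i+1}$, while~$X$ ends in the same forced configuration of weight $g(q^2u_i\bar u_{i+1})$. Thus $\widehat Z$ also equals $g(q^2u_i\bar u_{i+1})\,Z(u_1,\ldots,u_{i-1},u_{i+1},u_i,u_{i+2},\ldots,u_{n+1})$, and cancelling the common nonzero factor (using that $Z$ is a Laurent polynomial, by Proposition~\ref{degth}) yields the claim.

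The routine part is checking that each transport step really is an instance of~\eqref{YBE} or of~\eqref{RE}: the spectral parameters match because a vertex on $L_a\cap L_b$ carries $u_au_b$ while~$X$ carries $q^2u_i\bar u_{i+1}$, and the left and right boundary weights at the corner vertices $(k,k)$ and $(k,2n+2-k)$ are $W(\cdot,u_k)$, exactly as required by the reflection equations. I expect the main obstacle to be the global bookkeeping for the circuit: verifying that~$X$ meets precisely the vertices of $L_i\cup L_{i+1}$, each exactly once, that the two reflection zones involve only the four named corner vertices together with the two crossings, and that the resulting permutation of paths is exactly the transposition $u_i\leftrightarrow u_{i+1}$ and not some longer permutation.
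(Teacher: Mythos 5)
Your proposal is correct and is essentially the paper's own proof: the same train argument, inserting a forced auxiliary crossing on the top edges of columns $i$ and $i+1$ and transporting it around the nested rapidity lines via $i-1$ vertical Yang--Baxter moves, one left reflection move, $2(n-i)-1$ horizontal Yang--Baxter moves, one right reflection move, and $i-1$ further vertical Yang--Baxter moves, then cancelling the common nonvanishing factor. The only cosmetic difference is that the paper assigns the auxiliary vertex the parameter $q^2\u_iu_{i+1}$ rather than your $q^2u_i\u_{i+1}$, which amounts to the opposite labelling of the two lines in~\eqref{YBE} and is immaterial by the symmetry~\eqref{Wref} of the weights.
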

\begin{proof}
The proof is analogous to that used by Kuperberg~\cite[Lem.~11 \& Fig.~13]{Kup02} to show that
the partition function for $4n\times4n$ OOSASMs is symmetric in all of its parameters.

First note that it is sufficient to show that $Z(u_1,\ldots,u_{n+1})$ is symmetric in $u_i$ and $u_{i+1}$, for $i=1,\ldots,n-1$.
The proof of this will be outlined briefly for arbitrary~$n$ and~$i$, and then illustrated in more detail for the case $n=3$ and $i=2$.

Let $\mathcal{T}'_n$ be a modification of the graph $\mathcal{T}_n$, in which an additional degree~4 vertex $x$ has been introduced,
and the two edges incident with $(0,i)$ and $(0,i+1)$ are replaced by
four edges connecting~$x$ to $(0,i)$, $(0,i+1)$, $(1,i)$ and $(1,i+1)$.
It follows, using~\eqref{Z} and the conditions on the configurations~$C$ in~\eqref{Z}, 
that $W(\Wi,q^2\u_iu_{i+1})\,Z(u_1,\ldots,u_{n+1})$ can be expressed as a sum 
of products of bulk and boundary weights 
over all orientations of the edges of $\mathcal{T}'_n$, such that each edge incident with a top vertex $(0,j)$ 
is directed into that vertex and the six-vertex rule is satisfied at each degree~4 vertex, where the 
vertex~$x$ (whose incident edges necessarily have fixed orientations) is assigned a weight $W(\Wi,q^2\u_iu_{i+1})$
and the assignment of weights to other vertices is the same as for~$\mathcal{T}_n$.

It can now be seen that it is possible to apply to $W(\Wi,q^2\u_iu_{i+1})\,Z(u_1,\ldots,u_{n+1})$, in succession, the vertical
form of the Yang--Baxter equation~\eqref{YBE} $i-1$ times, the left form of the reflection equation~\eqref{RE} once,
the horizontal form of the Yang--Baxter equation~\eqref{YBE}
$2(n-i)-1$ times, the right form of the reflection equation~\eqref{RE} once, and the vertical form of the
Yang--Baxter equation~\eqref{YBE} $i-1$ times, where each of these equations involves a bulk weight with parameter $q^2\u_iu_{i+1}$.
The result of applying this sequence of relations is $W(\Wi,q^2\u_iu_{i+1})\,Z(u_1,\ldots u_{i-1},u_{i+1},u_i,u_{i+2},\ldots,u_{n+1})$,
as required.

For the case $n=3$ and $i=2$, the proof can be illustrated as follows, where the notation will be explained at the end:
\psset{unit=0.9mm}
\begin{gather*}\pspicture(0,-1)(70,45)\rput[l](0,23)
{$W(\raisebox{-2mm}{\psset{unit=5.8mm}\pspicture(0.1,0)(1,1)
\rput(0.5,0.5){\psline[linewidth=0.7pt,linecolor=red](0,-0.5)(0,0.5)\psline[linewidth=0.7pt,linecolor=green](-0.5,0)(0.5,0)}
\rput(0.5,0.5){$\scriptscriptstyle\bullet$}
\psdots[dotstyle=triangle*,dotscale=0.8](0.5,0.85)\psdots[dotstyle=triangle*,dotscale=0.85](0.5,0.2)
\psdots[dotstyle=triangle*,dotscale=0.8,dotangle=-90](0.85,0.5)\psdots[dotstyle=triangle*,dotscale=0.85,dotangle=-90](0.2,0.5)
\endpspicture},q^2\ubii\uiii)\,Z(\ui,\uii,\uiii,\uiv)$}\endpspicture
\pspicture(-9,-1)(100,45)\rput(-8,23){$=$}
\psline[linewidth=0.8pt,linecolor=blue](0,40)(0,30)(60,30)(60,40)\psline[linewidth=0.8pt,linecolor=brwn](30,0)(30,40)
\psline[linewidth=0.8pt,linecolor=green,linearc=2](20,44)(20,38)(10,32)(10,20)
\psline[linewidth=0.8pt,linecolor=green](10,20)(50,20)(50,40)
\psline[linewidth=0.8pt,linecolor=red,linearc=2](10,44)(10,38)(20,32)(20,10)
\psline[linewidth=0.8pt,linecolor=red](20,10)(40,10)(40,40)\multirput(10,41)(10,0){2}{\psdots[dotstyle=triangle*,dotscale=1.2](0,0)}
\multirput(30,36)(10,0){4}{\psdots[dotstyle=triangle*,dotscale=1.2](0,0)}\rput(0,36){\psdots[dotstyle=triangle*,dotscale=1.2](0,0)}
\multirput(10,44)(10,0){2}{$\ss\bullet$}\multirput(0,40)(15,-5){2}{$\ss\bullet$}
\multirput(30,40)(10,0){4}{$\ss\bullet$}\multirput(0,30)(10,0){7}{$\ss\bullet$}\multirput(10,20)(10,0){5}{$\ss\bullet$}
\multirput(20,10)(10,0){3}{$\ss\bullet$}\rput(30,0){$\ss\bullet$}\rput[b](15,32){$\ss v$}
\rput[l](64,23){[with $v=q^2\ubii\uiii$]}
\endpspicture\\
\pspicture(-16,-1)(60,41)\rput(-11,23){$\stackrel{\mathrm{VYBE}}{=}$}
\psline[linewidth=0.8pt,linecolor=blue](0,40)(0,30)(60,30)(60,40)\psline[linewidth=0.8pt,linecolor=brwn](30,0)(30,40)
\psline[linewidth=0.8pt,linecolor=green](20,40)(20,30)
\psline[linewidth=0.8pt,linecolor=green,linearc=2](20,30)(20,28)(10,22)(10,20)
\psline[linewidth=0.8pt,linecolor=green](10,20)(50,20)(50,40)
\psline[linewidth=0.8pt,linecolor=red](10,40)(10,30)
\psline[linewidth=0.8pt,linecolor=red,linearc=2](10,30)(10,28)(20,22)(20,20)
\psline[linewidth=0.8pt,linecolor=red](20,20)(20,10)(40,10)(40,40)
\multirput(0,36)(10,0){7}{\psdots[dotstyle=triangle*,dotscale=1.2](0,0)}\rput(15,25){$\ss\bullet$}
\multirput(0,40)(10,0){7}{$\ss\bullet$}\multirput(0,30)(10,0){7}{$\ss\bullet$}\multirput(10,20)(10,0){5}{$\ss\bullet$}
\multirput(20,10)(10,0){3}{$\ss\bullet$}\rput(30,0){$\ss\bullet$}\rput[b](15,22){$\ss v$}\endpspicture
\pspicture(-22,-1)(60,41)\rput(-11,23){$\stackrel{\mathrm{LRE}}{=}$}
\psline[linewidth=0.8pt,linecolor=blue](0,40)(0,30)(60,30)(60,40)\psline[linewidth=0.8pt,linecolor=brwn](30,0)(30,40)
\psline[linewidth=0.8pt,linecolor=green](20,40)(20,10)
\psline[linewidth=0.8pt,linecolor=green,linearc=2](20,10)(22,10)(28,20)(30,20)
\psline[linewidth=0.8pt,linecolor=green](30,20)(50,20)(50,40)
\psline[linewidth=0.8pt,linecolor=red](10,40)(10,20)(20,20)
\psline[linewidth=0.8pt,linecolor=red,linearc=2](20,20)(22,20)(28,10)(30,10)
\psline[linewidth=0.8pt,linecolor=red](30,10)(40,10)(40,40)
\multirput(0,36)(10,0){7}{\psdots[dotstyle=triangle*,dotscale=1.2](0,0)}\rput(25,15){$\ss\bullet$}
\multirput(0,40)(10,0){7}{$\ss\bullet$}\multirput(0,30)(10,0){7}{$\ss\bullet$}\multirput(10,20)(10,0){5}{$\ss\bullet$}
\multirput(20,10)(10,0){3}{$\ss\bullet$}\rput(30,0){$\ss\bullet$}\rput[l](21.8,15){$\ss v$}\endpspicture\\
\pspicture(-16,-1)(60,41)\rput(-11,23){$\stackrel{\mathrm{HYBE}}{=}$}
\psline[linewidth=0.8pt,linecolor=blue](0,40)(0,30)(60,30)(60,40)\psline[linewidth=0.8pt,linecolor=brwn](30,0)(30,40)
\psline[linewidth=0.8pt,linecolor=green](20,40)(20,10)(30,10)
\psline[linewidth=0.8pt,linecolor=green,linearc=2](30,10)(32,10)(38,20)(40,20)
\psline[linewidth=0.8pt,linecolor=green](40,20)(50,20)(50,40)
\psline[linewidth=0.8pt,linecolor=red](10,40)(10,20)(30,20)
\psline[linewidth=0.8pt,linecolor=red,linearc=2](30,20)(32,20)(38,10)(40,10)
\psline[linewidth=0.8pt,linecolor=red](40,10)(40,40)
\multirput(0,36)(10,0){7}{\psdots[dotstyle=triangle*,dotscale=1.2](0,0)}\rput(35,15){$\ss\bullet$}
\multirput(0,40)(10,0){7}{$\ss\bullet$}\multirput(0,30)(10,0){7}{$\ss\bullet$}\multirput(10,20)(10,0){5}{$\ss\bullet$}
\multirput(20,10)(10,0){3}{$\ss\bullet$}\rput(30,0){$\ss\bullet$}\rput[l](31.8,15){$\ss v$}\endpspicture
\pspicture(-22,-1)(60,41)\rput(-11,23){$\stackrel{\mathrm{RRE}}{=}$}
\psline[linewidth=0.8pt,linecolor=blue](0,40)(0,30)(60,30)(60,40)\psline[linewidth=0.8pt,linecolor=brwn](30,0)(30,40)
\psline[linewidth=0.8pt,linecolor=green](20,40)(20,10)(40,10)(40,20)
\psline[linewidth=0.8pt,linecolor=green,linearc=2](40,20)(40,22)(50,28)(50,30)
\psline[linewidth=0.8pt,linecolor=green](50,30)(50,40)
\psline[linewidth=0.8pt,linecolor=red](10,40)(10,20)(50,20)
\psline[linewidth=0.8pt,linecolor=red,linearc=2](50,20)(50,22)(40,28)(40,30)
\psline[linewidth=0.8pt,linecolor=red](40,30)(40,40)
\multirput(0,36)(10,0){7}{\psdots[dotstyle=triangle*,dotscale=1.2](0,0)}\rput(45,25){$\ss\bullet$}
\multirput(0,40)(10,0){7}{$\ss\bullet$}\multirput(0,30)(10,0){7}{$\ss\bullet$}\multirput(10,20)(10,0){5}{$\ss\bullet$}
\multirput(20,10)(10,0){3}{$\ss\bullet$}\rput(30,0){$\ss\bullet$}\rput[b](45,22){$\ss v$}\endpspicture\\
\pspicture(-16,-1)(60,45)\rput(-11,23){$\stackrel{\mathrm{VYBE}}{=}$}
\psline[linewidth=0.8pt,linecolor=blue](0,40)(0,30)(60,30)(60,40)\psline[linewidth=0.8pt,linecolor=brwn](30,0)(30,40)
\psline[linewidth=0.8pt,linecolor=green](20,40)(20,10)(40,10)
\psline[linewidth=0.8pt,linecolor=green,linearc=2](40,10)(40,32)(50,38)(50,44)
\psline[linewidth=0.8pt,linecolor=red](10,40)(10,20)(50,20)
\psline[linewidth=0.8pt,linecolor=red,linearc=2](50,20)(50,32)(40,38)(40,44)
\multirput(40,41)(10,0){2}{\psdots[dotstyle=triangle*,dotscale=1.2](0,0)}
\multirput(0,36)(10,0){4}{\psdots[dotstyle=triangle*,dotscale=1.2](0,0)}\rput(60,36){\psdots[dotstyle=triangle*,dotscale=1.2](0,0)}
\multirput(40,44)(10,0){2}{$\ss\bullet$}\multirput(45,35)(15,5){2}{$\ss\bullet$}
\multirput(0,40)(10,0){4}{$\ss\bullet$}\multirput(0,30)(10,0){7}{$\ss\bullet$}\multirput(10,20)(10,0){5}{$\ss\bullet$}
\multirput(20,10)(10,0){3}{$\ss\bullet$}\rput(30,0){$\ss\bullet$}\rput[b](45,32){$\ss v$}\endpspicture
\pspicture(-22,-1)(60,45)\rput(-14,23){$=$}
\rput(25,23){$W(\raisebox{-2mm}{\psset{unit=5.8mm}\pspicture(0.1,0)(1,1)
\rput(0.5,0.5){\psline[linewidth=0.7pt,linecolor=red](0,-0.5)(0,0.5)\psline[linewidth=0.7pt,linecolor=green](-0.5,0)(0.5,0)}
\rput(0.5,0.5){$\scriptscriptstyle\bullet$}
\psdots[dotstyle=triangle*,dotscale=0.8](0.5,0.85)\psdots[dotstyle=triangle*,dotscale=0.85](0.5,0.2)
\psdots[dotstyle=triangle*,dotscale=0.8,dotangle=-90](0.85,0.5)\psdots[dotstyle=triangle*,dotscale=0.85,dotangle=-90](0.2,0.5)
\endpspicture},q^2\ubii\uiii)\,Z(\ui,\uiii,\uii,\uiv)$.}\endpspicture
\end{gather*}

In these diagrams, each graph denotes a sum, over all orientations of the unlabeled edges which satisfy the six-vertex rule at each 
degree~4 vertex, of products of weights
for each degree~2 and degree~4 vertex.  The vertex weights are obtained using the same conventions as in Section~\ref{LocRel},
with the assignment of parameters for vertices whose incident edges appear horizontally and vertically being
determined by the colors of the incident vertices as in the example in~\eqref{col}.  The abbreviations above the $=$ signs are those
given in Section~\ref{LocRel}, and indicate the local relations which give the associated equalities.
\end{proof}

It can easily be checked that Propositions~\ref{recipth}--\ref{symm} are also satisfied if the odd-order DASASM partition function $Z(u_1,\ldots,u_{n+1})$
is replaced by one of the odd-order DASASM$_\pm$ partition functions $Z_\pm(u_1,\ldots,u_{n+1})$,
and that the latter are even or odd in $u_{n+1}$, with
$Z_\pm(u_1,\ldots,u_n,-u_{n+1})=\pm(-1)^nZ_\pm(u_1,\ldots,u_n,u_{n+1})$.
However, these additional results will not be needed.

\subsection{Specializations of the odd-order DASASM partition function}\label{Spec}
It will now be shown, in Propositions~\ref{spec1prop}--\ref{spec3prop}, that for certain specializations of 
the parameters, the DASASM partition function of order~$2n+1$ reduces,
up to a factor, to a DASASM partition function of order~$2n-1$ or~$2n-3$.

Only the specialization in Proposition~\ref{spec3prop} will be used in the proof of Theorem~\ref{DASASMFulldetTh} in Section~\ref{DASASMFulldetThPr},
while the specializations in Propositions~\ref{spec1prop},~\ref{spec2prop} and~\ref{spec3prop} will
be used in the alternative proof of Theorem~\ref{DASASMFulldetTh} in Section~\ref{DASASMFulldetThPrAlt}.

A further specialization, for which the DASASM partition function is zero, will be given in Proposition~\ref{spec4prop}.
This, together with Propositions~\ref{spec1prop} and~\ref{spec2prop}, will be used in the alternative proof of Corollary~\ref{DASASMdetCoroll} 
in Section~\ref{DASASMFulldetThPrAlt}.

\begin{proposition}\label{spec1prop}
If $u_1=q$, then the odd-order DASASM partition function satisfies
\begin{equation}\label{spec1}
Z(q,u_2,\ldots,u_{n+1})=
\frac{(q+\q)\prod_{i=2}^n\sigma(q^3u_i)^2\,\sigma(q^3u_{n+1})}{\sigma(q^4)^{2n-1}}\,Z(u_2,\ldots,u_{n+1}).\end{equation}
\end{proposition}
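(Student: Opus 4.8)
The plan is to substitute $u_1=q$ directly into the defining sum~\eqref{Z} and show that this specialization freezes the whole of the first row of $\mathcal{T}_n$ to a single local configuration, leaving a free six-vertex configuration on the subtriangle formed by rows $2,\dots,n+1$, which is a copy of $\mathcal{T}_{n-1}$ with parameters $u_2,\dots,u_{n+1}$. This mirrors the boundary reduction used by Kuperberg~\cite{Kup02} for $4n\times4n$ OOSASMs. First I would look at the right boundary vertex $(1,2n+1)$: at $u_1=q$ the ``$0$''-type right-boundary weights $W(\WRi,u)=W(\WRii,u)=\frac{\sigma(q\u)}{\sigma(q)}$ vanish (equivalently, by the reduction $\WR{b}{a}{q}=\delta_{a\tilde b}$ of~\eqref{Wred}), while $W(\WRiii,u)=W(\WRiv,u)=1$. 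Since the top edge at $(0,2n+1)$ is directed upwards, of $\WRiii,\WRiv$ only $\WRiii$ is compatible, so every surviving configuration $C$ has $C_{1,2n+1}=\WRiii$; in particular the horizontal edge joining $(1,2n)$ to $(1,2n+1)$ is directed towards $(1,2n+1)$.

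Next comes the cascade along row~1. Working leftwards, I would show by induction on $j=2n,2n-1,\dots,2$ that $C_{1j}=\Wi$ and that the horizontal edge joining $(1,j-1)$ to $(1,j)$ is directed towards $(1,j)$: at the bulk vertex $(1,j)$ the edge above it is directed out (it is a top edge to $(0,j)$) and, by the inductive hypothesis, the edge to the right is directed out, so the six-vertex rule forces the edges below and to the left of $(1,j)$ to be directed into $(1,j)$, i.e.\ $C_{1j}=\Wi$ and the edge below $(1,j)$ points upwards. The induction terminates at the left boundary vertex $(1,1)$, both of whose edges are now directed out, so $C_{11}=\Li$. Hence every vertex of row~1 is frozen, with weights (at $u_1=q$): $W(\Li,q)=\frac{\sigma(q^2)}{\sigma(q)}=q+\q$ at $(1,1)$; $W(\Wi,q\,u_{\min(j,2n+2-j)})=\frac{\sigma(q^3 u_{\min(j,2n+2-j)})}{\sigma(q^4)}$ at $(1,j)$ for $2\le j\le 2n$; and $W(\WRiii,q)=1$ at $(1,2n+1)$. (That row~1 cannot instead freeze with $A_{1,2n+1}=-1$ is also visible from the alternating-sign-and-sum-$1$ condition along the sequence~\eqref{path} with $i=1$, which now reads $0,\dots,0,\pm1$.)

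It then remains to assemble the two factors. The product of the frozen row-1 weights is $(q+\q)\,\bigl(\prod_{j=2}^{2n}\sigma(q^3u_{\min(j,2n+2-j)})\bigr)\,\sigma(q^4)^{-(2n-1)}$, and since $\min(j,2n+2-j)$ takes each of the values $2,\dots,n$ exactly twice and the value $n+1$ once as $j$ runs over $2,\dots,2n$, this is exactly the prefactor in~\eqref{spec1}. On the other hand the vertical edges joining row~1 to row~2 are all directed upwards, so relative to rows $2,\dots,n+1$ they are precisely the upward-directed top edges demanded by the definition of a configuration on $\mathcal{T}_{n-1}$; hence restriction to rows $\ge2$ is a bijection from the surviving configurations of $\mathcal{T}_n$ onto all configurations of $\mathcal{T}_{n-1}$. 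Since a bulk or boundary vertex $(i,j)$ with $i\ge2$ carries in~\eqref{Z} the parameter $u_i u_{\min(j,2n+2-j)}$, which is exactly the parameter it should carry as a vertex of $\mathcal{T}_{n-1}$ with parameters $u_2,\dots,u_{n+1}$ (a routine index check), summing the remaining weights over these configurations produces $Z(u_2,\dots,u_{n+1})$ and yields~\eqref{spec1}.

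The main obstacle is making the cascade airtight: one must verify that the forced orientation genuinely propagates along the entire row and that at each step the six-vertex rule together with the already-determined edges admits only the single configuration $\Wi$ (so that no contributing configuration is lost and none is counted twice), together with the routine but slightly fiddly matching of the parameter indices of the subtriangle, in particular at the central column $j=n+1$. A cleaner alternative phrasing is to carry out the two freezing steps as an iterated application of the reduction relations in~\eqref{Wred}, first using Proposition~\ref{recipth} to replace $u_1=q$ by $u_1=\q$ so that the left-boundary reduction $\WL{a}{b}{\q}=\delta_{a\tilde b}$ is also available; but the direct frozen-row analysis seems the most transparent.
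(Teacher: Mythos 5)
Your proposal is correct and follows essentially the same route as the paper: specialize $u_1=q$, use the vanishing of the right boundary weights $W(\WRi,q)=W(\WRii,q)=0$ (equivalently the reduction~\eqref{Wred}) together with the six-vertex rule and the upward top edges to freeze the entire first row as $C_{11}=\Li$, $C_{12}=\cdots=C_{1,2n}=\Vi$, $C_{1,2n+1}=\Riii$, and then read off the prefactor from the frozen weights while the remaining rows reproduce $Z(u_2,\ldots,u_{n+1})$ on a copy of $\mathcal{T}_{n-1}$. The paper states this more tersely, but your cascade argument and index bookkeeping are exactly the details it leaves implicit.
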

Note that Proposition~\ref{spec1prop} can be combined with Propositions~\ref{recipth}--\ref{symm} to obtain specializations of $Z(u_1,\ldots,u_{n+1})$ at
$u_i=q^{\pm1}$ and $u_i=-q^{\pm1}$, for $i=1,\ldots,n$.
These specializations will be discussed in Section~\ref{DASASMFulldetThPrAlt}.
\begin{proof}\psset{unit=7mm}
Let $u_1=q$, consider $Z(u_1,\ldots,u_{n+1})$, and apply the formula in~\eqref{Wred} for the reduction of
right boundary weights at~$q$ to $W(C_{1,2n+1},u_1)$ in~\eqref{Z}.  It then follows, using the six-vertex rule and the upward orientation of the top edges
in each configuration, that the contribution from configuration $C$ in~\eqref{Z} is zero unless
the local configurations in the first row of~$\mathcal{T}_n$ are fixed, with
$C_{11}=\Li$ and $C_{12}=\ldots=C_{1,2n}=\Vi$.  This now leads to the RHS of~\eqref{spec1}.
\end{proof}

\begin{proposition}\label{spec2prop}
If $u_1u_2=q^2$, then the odd-order DASASM partition function satisfies
\begin{multline}\label{spec2}
Z(u_1,u_2,\ldots,u_{n+1})=\\
\frac{\sigma(u_1)\sigma(qu_1)\sigma(u_2)\sigma(qu_2)
\prod_{i=3}^n\bigl(\sigma(q^2u_1u_i)\sigma(q^2u_2u_i)\bigr)^2\,\sigma(q^2u_1u_{n+1})\sigma(q^2u_2u_{n+1})}
{\sigma(q)^4\,\sigma(q^4)^{2(2n-3)}}\\
\times Z(u_3,\ldots,u_{n+1}).\end{multline}
\end{proposition}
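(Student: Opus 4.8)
The plan is to mirror the proof of Proposition~\ref{spec1prop}, with the reduction of a boundary weight there replaced by the reduction~\eqref{Wred} of bulk weights at the value~$q^2$. Set $u_1u_2=q^2$ and inspect the assignment of parameters in the sum~\eqref{Z}. A bulk vertex $(i,j)$ of $\mathcal{T}_n$ carries the parameter $u_i\,u_{\min(j,2n+2-j)}$, and for such a vertex one always has $\min(j,2n+2-j)>i$; hence the only bulk vertices whose parameter specialises to $u_1u_2$ are $(1,2)$ and $(1,2n)$, and at $u_1u_2=q^2$ these two vertices acquire the parameter~$q^2$. By~\eqref{Wred}, in any configuration $C$ contributing a nonzero term to~\eqref{Z} the local configurations $C_{12}$ and $C_{1,2n}$ are then forced into a restricted set, each of weight~$1$.

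The first and main step is to propagate these two constraints. Using the six-vertex rule at the bulk vertices, together with the fact that every top edge of $\mathcal{T}_n$ is directed upwards, one should be able to show that the constraints at $(1,2)$ and $(1,2n)$ force the local configurations all along row~$1$, and then, through the edges joining row~$1$ to row~$2$, all along row~$2$. I expect the net effect to be that the restriction of $C$ to rows~$1$ and~$2$ is completely determined \emph{except} on the triangular array of positions $(i,j)$ with $3\le i\le n+1$ and $i\le j\le 2n+2-i$; these positions form a copy of $\mathcal{T}_{n-2}$, and, crucially, the edges joining row~$2$ to row~$3$, which play the role of its top edges, are forced to point upwards. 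Since, under the relabelling $u_k\mapsto v_{k-2}$ for $k\ge3$, the parameter of each such inner bulk vertex of $\mathcal{T}_n$ coincides with the parameter prescribed by~\eqref{Z} for the corresponding bulk vertex of $\mathcal{T}_{n-2}$, the restrictions of $C$ to this inner copy then contribute exactly $Z(u_3,\ldots,u_{n+1})$.

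It remains to multiply the weights of the forced vertices in rows~$1$ and~$2$. These are: the four boundary vertices $(1,1)$, $(1,2n+1)$, $(2,2)$, $(2,2n)$, supplying the denominator factor $\sigma(q)^4$; the two bulk vertices $(1,2)$ and $(1,2n)$ at parameter~$q^2$, each of weight~$1$; and the remaining $2(2n-3)$ bulk vertices of rows~$1$ and~$2$, supplying $\sigma(q^4)^{2(2n-3)}$ in the denominator together with numerator factors of the form $\sigma(q^2u_\bullet u_\bullet)$, all read off from Table~\ref{weights} via~\eqref{bij}. Collecting these factors and using $u_1u_2=q^2$ to bring the resulting $\sigma$-values into the required shape yields the prefactor on the right-hand side of~\eqref{spec2}, which completes the proof.

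The hard part will be the propagation step: in contrast to Proposition~\ref{spec1prop}, where the reduced weight sits at a degree-$2$ vertex at the corner of a single row and the freezing simply travels inward along that row, here the two reduced weights sit at interior degree-$4$ vertices of row~$1$, and one must check carefully that their combined constraints, pushed outward along row~$1$ and downward into row~$2$, really do pin the configuration down to the inner copy of $\mathcal{T}_{n-2}$. A lesser, purely mechanical, difficulty is verifying that the product of the forced weights matches the claimed prefactor after the substitution $u_1u_2=q^2$.
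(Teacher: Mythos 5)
Your overall plan coincides with the paper's: apply the reduction~\eqref{Wred} to the two bulk vertices $(1,2)$ and $(1,2n)$, whose parameter specializes to $q^2$, freeze the first two rows, and identify what remains with $Z(u_3,\ldots,u_{n+1})$ on an inner copy of $\mathcal{T}_{n-2}$. The gap is exactly in the step you flag as the hard part. The six-vertex rule and the upward top edges do force row~$1$: the delta $\delta_{a\tilde d}$ at each of $(1,2)$ and $(1,2n)$ fixes the edge to its left, a right-to-left propagation then freezes $(1,2n-1),\ldots,(1,3)$ and the left boundary vertex $(1,1)$, and the bottom edge of $(1,2)$ is forced upward. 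But they do \emph{not} force row~$2$. The reduction at $(1,2n)$ leaves the constraint $\delta_{b\tilde c}$, which merely ties the bottom and right edges of $(1,2n)$ into a single directed path joining the two right-boundary vertices $(2,2n)$ and $(1,2n+1)$; the orientation of that path, and the left edge of $(2,2n)$, remain free, and configurations with the ``wrong'' row-$2$ pattern have individually nonzero weight. What kills them is not a parity or propagation argument but the right boundary unitarity equation~\eqref{BUE}, applied to the pair of boundary weights $W(C_{1,2n+1},u_1)$ and $W(C_{2,2n},u_2)$: the hypothesis $u_1u_2=q^2$ is precisely the condition $\{u_1,u_2\}=\{qv,q\bar v\}$ under which~\eqref{BUE} applies, the relevant cancellation being $\sigma(q\u_1)+\sigma(q\u_2)=0$. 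The identity produces the factor $\sigma(u_1)\sigma(u_2)/\sigma(q)^2$ together with a Kronecker delta on the two external edges, and only this delta forces the left edge of $(2,2n)$, after which row~$2$ freezes by the same propagation as row~$1$. Without this algebraic step the proof does not go through.

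The same point shows that your accounting of the prefactor cannot be correct as stated: a single forced right-boundary weight is either $1$ or $\sigma(q\u_i)/\sigma(q)$ (Table~\ref{weights}), so the factors $\sigma(u_1)$ and $\sigma(u_2)$ appearing in~\eqref{spec2} cannot come from individually frozen vertices at $(1,2n+1)$ and $(2,2n)$ --- they are the output of the unitarity sum. The correct tally is $\sigma(qu_1)\sigma(qu_2)/\sigma(q)^2$ from the two frozen left-boundary vertices $(1,1)$ and $(2,2)$, $\sigma(u_1)\sigma(u_2)/\sigma(q)^2$ from~\eqref{BUE}, and $\prod_{i=3}^{n}\bigl(\sigma(q^2u_1u_i)\sigma(q^2u_2u_i)\bigr)^2\sigma(q^2u_1u_{n+1})\sigma(q^2u_2u_{n+1})/\sigma(q^4)^{2(2n-3)}$ from the $2(2n-3)$ frozen bulk vertices in rows~$1$ and~$2$ other than $(1,2)$ and $(1,2n)$.
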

Note that Proposition~\ref{spec2prop} can be combined with Propositions~\ref{recipth}--\ref{symm} to obtain specializations of $Z(u_1,\ldots,u_{n+1})$ at
$u_iu_j=q^{\pm2}$ and $u_iu_j=-q^{\pm2}$, for distinct $i,j\in\{1,\ldots,n\}$.
These specializations will be discussed in Section~\ref{DASASMFulldetThPrAlt}.
\begin{proof}\psset{unit=7mm}
The proof will be outlined for arbitrary $n$, and then illustrated in more detail for the case $n=3$.
Let $u_1u_2=q^2$, consider $Z(u_1,\ldots,u_{n+1})$,
and apply the formula in~\eqref{Wred} for the reduction of bulk weights at~$q^2$ to
$W(C_{12},u_1u_2)$ and $W(C_{1,2n},u_1u_2)$ in~\eqref{Z}.  This leads to a fixing of local configurations in the first row of~$\mathcal{T}_n$, specifically
$C_{11}=\Li$ and $C_{13}=\ldots=C_{1,2n-1}=\Vi$ for each configuration $C$ in~\eqref{Z} with a nonzero contribution,
thereby giving a factor $W(\WLi,u_1)\prod_{i=3}^n W(\Wi,u_1u_i)^2\,W(\Wi,u_1u_{n+1})$.

Now apply the right boundary unitarity equation~\eqref{BUE} to the boundary weights $W(C_{1,2n+1},u_1)$ and $W(C_{2,2n},u_2)$ in~\eqref{Z}.
This gives a factor $\sigma(u_1)\sigma(u_2)/\sigma(q)^2$, and
leads to a fixing of local configurations in the second row of~$\mathcal{T}_n$, specifically
$C_{22}=\Li$ and $C_{23}=\ldots=C_{2,2n-1}=\Vi$ for each configuration~$C$ in~\eqref{Z} with a nonzero contribution,
thereby giving a further factor $W(\WLi,u_2)\prod_{i=3}^n W(\Wi,u_2u_i)^2\,W(\Wi,u_2u_{n+1})$, and yielding the RHS of~\eqref{spec2}.

For $n=3$, the proof can be illustrated as follows:
\psset{unit=0.8mm}
\begin{align*}&Z(\ui,\uii,\uiii,\uiv)|_{\ui\uii=q^2}\\[1.5mm]
&\makebox[11mm]{$=$}\raisebox{-16.5mm}{\pspicture(-4,-4)(60,42)
\psline[linewidth=0.8pt,linecolor=blue](0,40)(0,30)(60,30)(60,40)\psline[linewidth=0.8pt,linecolor=brwn](30,0)(30,40)
\psline[linewidth=0.8pt,linecolor=green](10,40)(10,20)(50,20)(50,40)\psline[linewidth=0.8pt,linecolor=red](20,40)(20,10)(40,10)(40,40)
\multirput(0,36)(10,0){7}{\psdots[dotstyle=triangle*,dotscale=1.2](0,0)}
\multirput(0,40)(10,0){7}{$\ss\bullet$}\multirput(0,30)(10,0){7}{$\ss\bullet$}\multirput(10,20)(10,0){5}{$\ss\bullet$}
\multirput(20,10)(10,0){3}{$\ss\bullet$}\rput(30,0){$\ss\bullet$}\endpspicture}\\
&\makebox[11mm]{$\stackrel{\eqref{Wred}}{=}$}\frac{\sigma(q\ui)}{\sigma(q)}\raisebox{-16.5mm}{\pspicture(6,-2)(60,42)
\psline[linewidth=0.8pt,linecolor=blue,linearc=3](12,28)(14,30)(20,30)\psline[linewidth=0.8pt,linecolor=blue](20,30)(49,30)
\psline[linewidth=0.8pt,linecolor=blue,linearc=3](52,28)(54,30)(60,30)\psline[linewidth=0.8pt,linecolor=blue](60,30)(60,40)
\psline[linewidth=0.8pt,linecolor=brwn](30,0)(30,40)\psline[linewidth=0.8pt,linecolor=green](10,20)(50,20)
\psline[linewidth=0.8pt,linecolor=green,linearc=3](10,20)(10,26)(12,28)\psline[linewidth=0.8pt,linecolor=green,linearc=3](50,20)(50,26)(52,28)
\psline[linewidth=0.8pt,linecolor=red](20,40)(20,10)(40,10)(40,40)
\multirput(20,36)(10,0){3}{\psdots[dotstyle=triangle*,dotscale=1.2](0,0)}\rput(60,36){\psdots[dotstyle=triangle*,dotscale=1.2](0,0)}
\rput(45,30){\psdots[dotstyle=triangle*,dotscale=1.2,dotangle=-90](0,0)}
\multirput(20,40)(10,0){3}{$\ss\bullet$}\rput(49,30){$\ss\bullet$}\multirput(60,30)(0,10){2}{$\ss\bullet$}
\multirput(20,30)(10,0){3}{$\ss\bullet$}\multirput(10,20)(10,0){5}{$\ss\bullet$}
\multirput(20,10)(10,0){3}{$\ss\bullet$}\rput(30,0){$\ss\bullet$}\endpspicture}\\
&\makebox[11mm]{$=$}\frac{\sigma(q\ui)\,\sigma(q^2\ui\uiii)^2\,\sigma(q^2\ui\uiv)}{\sigma(q)\,\sigma(q^4)^3}
\raisebox{-16.5mm}{\pspicture(6,-2)(60,42)
\psline[linewidth=0.8pt,linecolor=blue,linearc=3](52,28)(54,30)(60,30)\psline[linewidth=0.8pt,linecolor=blue](60,30)(60,40)
\psline[linewidth=0.8pt,linecolor=brwn](30,0)(30,30)\psline[linewidth=0.8pt,linecolor=green](10,30)(10,20)(50,20)
\psline[linewidth=0.8pt,linecolor=green,linearc=3](50,20)(50,26)(52,28)\psline[linewidth=0.8pt,linecolor=red](20,30)(20,10)(40,10)(40,30)
\multirput(10,26)(10,0){4}{\psdots[dotstyle=triangle*,dotscale=1.2](0,0)}\rput(60,36){\psdots[dotstyle=triangle*,dotscale=1.2](0,0)}
\multirput(60,30)(0,10){2}{$\ss\bullet$}\multirput(10,30)(10,0){4}{$\ss\bullet$}\multirput(10,20)(10,0){5}{$\ss\bullet$}
\multirput(20,10)(10,0){3}{$\ss\bullet$}\rput(30,0){$\ss\bullet$}\endpspicture}\\
&\makebox[11mm]{$\stackrel{\mathrm{RBUE}}{=}$}
\frac{\sigma(\ui)\,\sigma(q\ui)\,\sigma(\uii)\,\sigma(q^2\ui\uiii)^2\,\sigma(q^2\ui\uiv)}{\sigma(q)^3\,\sigma(q^4)^3}
\raisebox{-18mm}{\pspicture(6,-4)(50,34)
\psline[linewidth=0.8pt,linecolor=brwn](30,0)(30,30)\psline[linewidth=0.8pt,linecolor=green](10,30)(10,20)(50,20)
\psline[linewidth=0.8pt,linecolor=red](20,30)(20,10)(40,10)(40,30)
\multirput(10,26)(10,0){4}{\psdots[dotstyle=triangle*,dotscale=1.2](0,0)}
\rput(46,20){\psdots[dotstyle=triangle*,dotscale=1.2,dotangle=-90](0,0)}
\multirput(10,30)(10,0){4}{$\ss\bullet$}\multirput(10,20)(10,0){5}{$\ss\bullet$}
\multirput(20,10)(10,0){3}{$\ss\bullet$}\rput(30,0){$\ss\bullet$}\endpspicture}\\
&\makebox[11mm]{$=$}\frac{\sigma(\ui)\,\sigma(q\ui)\,\sigma(\uii)\,\sigma(q\uii)\,(\sigma(q^2\ui\uiii)\,
\sigma(q^2\uii\uiii))^2\,\sigma(q^2\ui\uiv)\,\sigma(q^2\uii\uiv)}
{\sigma(q)^4\,\sigma(q^4)^6}\,Z(\uiii,\uiv),\end{align*}
where the notation is the same as in the example in the proof of Proposition~\ref{symm}.
Note that, in the second and third graphs above, the curved lines connecting certain pairs of vertices denote single edges
(which accordingly have a single orientation), but their color changes midway so that the parameters associated with 
their endpoints are given correctly.
\end{proof}

\begin{proposition}\label{spec3prop}
If $u_1u_{n+1}=q^2$, then the odd-order DASASM partition function satisfies
\begin{equation}\label{spec3}
Z(u_1,\ldots,u_{n+1})=
\frac{\sigma(qu_1)\,(\sigma(q\u_1)+\sigma(q))\prod_{i=2}^n\sigma(q^2u_1u_i)\sigma(q^2u_iu_{n+1})}{\sigma(q)^2\,\sigma(q^4)^{2n-2}}
\,Z(u_2,\ldots,u_n,u_1).\end{equation}
\end{proposition}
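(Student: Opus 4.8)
The plan is to prove Proposition~\ref{spec3prop} by the reduction-and-propagation technique already used for Propositions~\ref{spec1prop} and~\ref{spec2prop}: imposing $u_1u_{n+1}=q^2$ degenerates one bulk weight, the six-vertex rule then freezes a large portion of every contributing configuration, and what remains is recognised as an order-$(2n-1)$ partition function with the parameters suitably relabelled.

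First I would set $u_1u_{n+1}=q^2$ and apply the bulk reduction~\eqref{Wred} to the weight $W(C_{1,n+1},u_1u_{n+1})$ at the vertex $(1,n+1)$, which lies simultaneously on row~$1$ and on the central column. Because the edge above $(1,n+1)$ is a top edge and points upwards, the reduction forces the edge entering $(1,n+1)$ from the left to point rightwards, and more generally makes the local configuration at $(1,n+1)$ factor into a consistently oriented ``left--above'' strand and a ``below--right'' strand. Propagating leftwards along row~$1$ from $(1,n+1)$, using the six-vertex rule and the upwards orientation of all top edges, then pins down $C_{1,1}=\Li$ with weight $W(\WLi,u_1)=\sigma(qu_1)/\sigma(q)$ and forces $C_{1,j}$ for $j=2,\dots,n$ to be the ``straight'' configuration of weight $W(\Wi,u_1u_j)=\sigma(q^2u_1u_j)/\sigma(q^4)$. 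Together these produce the factor $\sigma(qu_1)\,\sigma(q)^{-1}\prod_{i=2}^n\sigma(q^2u_1u_i)\,\sigma(q^4)^{-(n-1)}$ appearing in~\eqref{spec3}.

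The remaining task — which I expect to be the crux of the proof — is to evaluate the sum over the rest of each configuration: the right half of row~$1$, the central-column vertices $(2,n+1),\dots,(n,n+1)$, and the sub-triangle on rows $2,\dots,n+1$. Here, unlike in Propositions~\ref{spec1prop} and~\ref{spec2prop}, the ``below--right'' strand at $(1,n+1)$ can be completed in two genuinely different ways according to the orientation of the central-column edge immediately below $(1,n+1)$, and I expect these two cases to contribute the two summands $\sigma(q\u_1)$ and $\sigma(q)$ of the prefactor of~\eqref{spec3}. I would handle this part by absorbing the central-column vertices and the right half of row~$1$ one at a time using the reflection equations~\eqref{RE} and the boundary unitarity equations~\eqref{BUE}; the identity $u_iu_{n+1}=q^2u_i\u_1$ is what converts the resulting weights into the factors $\sigma(q^2u_iu_{n+1})$ in~\eqref{spec3}, and after these absorptions the untouched sub-triangle should be identified with the graph $\mathcal{T}_{n-1}$ whose central column carries the parameter~$u_1$, giving the factor $Z(u_2,\dots,u_n,u_1)$. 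As in the proofs of Propositions~\ref{symm} and~\ref{spec2prop}, I would carry out the argument for general $n$ and then draw it out explicitly for $n=3$, and I would check the bookkeeping against $n=1$, where~\eqref{Z1} at $u_1u_2=q^2$ collapses to $\tfrac{\sigma(qu_1)}{\sigma(q)^2}\bigl(\sigma(q\u_1)+\sigma(q)\bigr)$, which is~\eqref{spec3} with $Z(u_1)=1$.
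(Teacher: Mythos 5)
Your opening step agrees with the paper's proof: imposing $u_1u_{n+1}=q^2$ and applying the reduction~\eqref{Wred} to the weight at $(1,n+1)$ freezes $C_{11}=\Li$ and $C_{12}=\dots=C_{1n}=\Vi$ and produces the factor $\sigma(qu_1)\prod_{i=2}^n\sigma(q^2u_1u_i)\big/\bigl(\sigma(q)\,\sigma(q^4)^{n-1}\bigr)$, and your $n=1$ check against~\eqref{Z1} is correct. The gap is precisely in the part you flag as the crux. After the reduction, the central column below $(1,n+1)$ and the right half of row~$1$ fuse into a single strand which crosses the loop carrying $u_i$ (column $i$, row $i$, column $2n+2-i$) twice for each $i=2,\dots,n$: once at $(i,n+1)$ with parameter $u_iu_{n+1}=q^2\u_1u_i$ and once at $(1,2n+2-i)$ with parameter $u_1u_i$. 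These two crossings are separated, both along the strand and along the loop, by many other crossings and by the right boundary vertex $(i,2n+2-i)$, so the reflection equation~\eqref{RE} cannot be applied to them directly --- it requires the two crossings to sit immediately adjacent to the two boundary vertices involved. The indispensable missing tool is the Yang--Baxter equation~\eqref{YBE}: the paper applies, for each $i=2,\dots,n$ in turn, the horizontal form $n-i$ times and the vertical form $i-2$ times to transport the crossing with parameter $q^2\u_1u_i$ into position, with a single application of the right reflection equation in between. Your plan omits~\eqref{YBE} from this stage altogether and instead invokes the boundary unitarity equation~\eqref{BUE}, which is not applicable here: \eqref{BUE} needs two boundary vertices on a common strand with parameters of the form $qu$ and $q\u$ (this is what $u_1u_2=q^2$ supplies in Proposition~\ref{spec2prop}), whereas the present relation $u_1u_{n+1}=q^2$ involves $u_{n+1}$, which is never a boundary parameter. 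Without the Yang--Baxter moves the ``one at a time'' absorption you describe cannot be carried out.

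A secondary inaccuracy: the two summands $\sigma(q\u_1)$ and $\sigma(q)$ do not come from the two orientations of the central-column edge just below $(1,n+1)$. In the paper's proof, after the strand has been dragged around it terminates at the right boundary vertex $(1,2n+1)$ with parameter $u_1$, and for each fixed orientation of the edge joining that vertex to the new central column --- that is, within each of $Z_\pm(u_2,\dots,u_n,u_1)$ separately --- the two admissible local configurations there contribute weights $1$ and $\sigma(q\u_1)/\sigma(q)$, whose sum gives the same factor $(\sigma(q)+\sigma(q\u_1))/\sigma(q)$ in both the $+$ and $-$ cases. Likewise, the factors $\sigma(q^2u_iu_{n+1})$ arise as the weights of the $n-1$ vertices that become frozen near that terminus, while the identity $u_iu_{n+1}=q^2\u_1u_i$ is what makes those vertices usable as the crossing vertex in~\eqref{YBE} and~\eqref{RE}, rather than what ``converts'' boundary-unitarity output.
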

Note that Proposition~\ref{spec3prop} can be combined with Propositions~\ref{recipth}--\ref{symm} to obtain specializations of $Z(u_1,\ldots,u_{n+1})$ at
$u_iu_{n+1}=q^{\pm2}$ and $u_iu_{n+1}=-q^{\pm2}$, for $i=1,\ldots,n$.
These specializations will be discussed in Section~\ref{DASASMFulldetThPr}.
\begin{proof}
The proof will be outlined briefly for arbitrary $n$, and then illustrated in more detail for the case $n=3$.

\psset{unit=7mm}
Let $u_1u_{n+1}=q^2$, consider $Z(u_1,\ldots,u_{n+1})$,
and apply the formula in~\eqref{Wred} for the reduction of bulk weights at~$q^2$ to the weight
$W(C_{1,n+1},u_1u_{n+1})$ in~\eqref{Z}.  This leads to
a fixing of local configurations in the left half of the first row of~$\mathcal{T}_n$, specifically
$C_{11}=\Li$ and $C_{12}=\ldots=C_{1n}=\Vi$ for each configuration~$C$ in~\eqref{Z} with a nonzero contribution, thereby giving a factor
$W(\WLi,u_1)\prod_{i=2}^n W(\Wi,u_1u_i)$.

Now, for $i=2,\ldots,n$, apply, in succession, the horizontal
form of the Yang--Baxter equation~\eqref{YBE} $n-i$ times, the right form of the reflection equation~\eqref{RE} once,
and the vertical form of the Yang--Baxter equation~\eqref{YBE}
$i-2$ times, where each of these equations involves a bulk weight with parameter~$u_iu_{n+1}=q^2\u_1u_i$.
This leads to a further fixing of $n-1$ local configurations, which gives a factor $\prod_{i=2}^n W(\Wi,u_iu_{n+1})$.  The overall result is then
\begin{multline*}\frac{\sigma(qu_1)\prod_{i=2}^n\sigma(q^2u_1u_i)\sigma(q^2u_iu_{n+1})}{\sigma(q)\,\sigma(q^4)^{2n-2}}\bigl((W(\WRiv,u_1) +W(\WRii,u_1))
Z_-(u_2,\ldots,u_n,u_1)\\
+(W(\WRi,u_1)+W(\WRiii,u_1))Z_+(u_2,\ldots,u_n,u_1)\bigr),\end{multline*}
from which the RHS of~\eqref{spec3} follows.

For $n=3$, the proof can be illustrated as follows:
\psset{unit=0.8mm}
\begin{align*}&Z(\ui,\uii,\uiii,\uiv)|_{\ui\uiv=q^2}\\[1.5mm]
&\makebox[19mm]{$=$}\raisebox{-16.5mm}{\pspicture(-4,-2)(60,42)
\psline[linewidth=0.8pt,linecolor=blue](0,40)(0,30)(60,30)(60,40)\psline[linewidth=0.8pt,linecolor=brwn](30,0)(30,40)
\psline[linewidth=0.8pt,linecolor=green](10,40)(10,20)(50,20)(50,40)
\psline[linewidth=0.8pt,linecolor=red](20,40)(20,10)(40,10)(40,40)
\multirput(0,36)(10,0){7}{\psdots[dotstyle=triangle*,dotscale=1.2](0,0)}
\multirput(0,40)(10,0){7}{$\ss\bullet$}\multirput(0,30)(10,0){7}{$\ss\bullet$}\multirput(10,20)(10,0){5}{$\ss\bullet$}
\multirput(20,10)(10,0){3}{$\ss\bullet$}\rput(30,0){$\ss\bullet$}\endpspicture}\\
&\makebox[19mm]{$\stackrel{\eqref{Wred}}{=}$}\raisebox{-16.5mm}{\pspicture(-1,-2)(60,42)
\psline[linewidth=0.8pt,linecolor=blue](0,40)(0,30)(29,30)
\psline[linewidth=0.8pt,linecolor=blue](40,30)(60,30)(60,40)\psline[linewidth=0.8pt,linecolor=blue,linearc=3](32,28)(34,30)(40,30)
\psline[linewidth=0.8pt,linecolor=brwn,linearc=3](30,0)(30,26)(32,28)
\psline[linewidth=0.8pt,linecolor=green](10,40)(10,20)(50,20)(50,40)\psline[linewidth=0.8pt,linecolor=red](20,40)(20,10)(40,10)(40,40)
\multirput(0,36)(10,0){3}{\psdots[dotstyle=triangle*,dotscale=1.2](0,0)}\multirput(40,36)(10,0){3}{\psdots[dotstyle=triangle*,dotscale=1.2](0,0)}
\rput(25,30){\psdots[dotstyle=triangle*,dotscale=1.2,dotangle=-90](0,0)}
\multirput(0,40)(10,0){3}{$\ss\bullet$}\multirput(0,30)(10,0){3}{$\ss\bullet$}\rput(29,30){$\ss\bullet$}
\multirput(40,40)(10,0){3}{$\ss\bullet$}\multirput(40,30)(10,0){3}{$\ss\bullet$}\multirput(10,20)(10,0){5}{$\ss\bullet$}
\multirput(20,10)(10,0){3}{$\ss\bullet$}\rput(30,0){$\ss\bullet$}\endpspicture}\\
&\makebox[19mm]{$=$}\frac{\sigma(q\ui)\,\sigma(q^2\ui\uii)\,\sigma(q^2\ui\uiii)}{\sigma(q)\,\sigma(q^4)^2}
\raisebox{-16.5mm}{\pspicture(6,-2)(60,42)\psline[linewidth=0.8pt,linecolor=blue,linearc=3](32,28)(34,30)(40,30)
\psline[linewidth=0.8pt,linecolor=blue](40,30)(60,30)(60,40)
\psline[linewidth=0.8pt,linecolor=brwn,linearc=3](30,0)(30,26)(32,28)
\psline[linewidth=0.8pt,linecolor=green](10,30)(10,20)(50,20)(50,40)\psline[linewidth=0.8pt,linecolor=red](20,30)(20,10)(40,10)(40,40)
\multirput(10,26)(10,0){2}{\psdots[dotstyle=triangle*,dotscale=1.2](0,0)}\multirput(40,36)(10,0){3}{\psdots[dotstyle=triangle*,dotscale=1.2](0,0)}
\multirput(40,40)(10,0){3}{$\ss\bullet$}\multirput(10,30)(10,0){2}{$\ss\bullet$}\multirput(40,30)(10,0){3}{$\ss\bullet$}\multirput(10,20)(10,0){5}{$\ss\bullet$}
\multirput(20,10)(10,0){3}{$\ss\bullet$}\rput(30,0){$\ss\bullet$}\endpspicture}\\
&\makebox[19mm]{$=$}\frac{\sigma(q\ui)\,\sigma(q^2\ui\uii)\,\sigma(q^2\ui\uiii)}{\sigma(q)\,\sigma(q^4)^2}
\raisebox{-12.5mm}{\pspicture(10,8)(60,42)
\psline[linewidth=0.8pt,linecolor=blue,linearc=2](24,10)(26,10)(38,30)(60,30)\psline[linewidth=0.8pt,linecolor=blue](60,30)(60,40)
\psline[linewidth=0.8pt,linecolor=green](14,40)(14,30)\psline[linewidth=0.8pt,linecolor=green,linearc=2](14,30)(32,30)(38,20)(50,20)
\psline[linewidth=0.8pt,linecolor=green](50,20)(50,40)
\psline[linewidth=0.8pt,linecolor=red](24,40)(24,20)\psline[linewidth=0.8pt,linecolor=red,linearc=2](24,20)(26,20)(32,10)(40,10)
\psline[linewidth=0.8pt,linecolor=red](40,10)(40,40)
\rput[r](33.4,25){$\ss v$}\rput[r](27.4,15){$\ss w$}
\multirput(14,36)(10,0){2}{\psdots[dotstyle=triangle*,dotscale=1.2](0,0)}\multirput(40,36)(10,0){3}{\psdots[dotstyle=triangle*,dotscale=1.2](0,0)}
\multirput(14,40)(10,0){2}{$\ss\bullet$}\multirput(40,40)(10,0){3}{$\ss\bullet$}
\multirput(14,30)(10,0){2}{$\ss\bullet$}\multirput(40,30)(10,0){3}{$\ss\bullet$}
\rput(24,20){$\ss\bullet$}\multirput(40,20)(10,0){2}{$\ss\bullet$}
\multirput(29,15)(6,10){2}{$\ss\bullet$}\multirput(24,10)(16,0){2}{$\ss\bullet$}
\rput[l](65,29){[with $v=\uii\uiv=q^2\ubi\uii$,}
\rput[l](67,21){$w=\uiii\uiv=q^2\ubi\uiii$]}\endpspicture}\\
&\makebox[19mm]{$\stackrel{\mathrm{HYBE,\,RRE}}{=}$}\frac{\sigma(q\ui)\,\sigma(q^2\ui\uii)\,\sigma(q^2\ui\uiii)}{\sigma(q)\,\sigma(q^4)^2}
\raisebox{-12.5mm}{\pspicture(16,8)(60,46)
\psline[linewidth=0.8pt,linecolor=blue,linearc=2](30,10)(32,10)(38,20)(50,20)\psline[linewidth=0.8pt,linecolor=blue,linearc=2](50,20)(50,32)(60,38)(60,44)
\psline[linewidth=0.8pt,linecolor=green](20,40)(20,30)(60,30)\psline[linewidth=0.8pt,linecolor=green,linearc=2](60,30)(60,32)(50,38)(50,44)
\psline[linewidth=0.8pt,linecolor=red](30,40)(30,20)\psline[linewidth=0.8pt,linecolor=red,linearc=2](30,20)(32,20)(38,10)(40,10)
\psline[linewidth=0.8pt,linecolor=red](40,10)(40,40)
\rput[b](55,31.5){$\ss v$}\rput[r](33.4,15){$\ss w$}
\multirput(20,36)(10,0){3}{\psdots[dotstyle=triangle*,dotscale=1.2](0,0)}\multirput(50,41)(10,0){2}{\psdots[dotstyle=triangle*,dotscale=1.2](0,0)}
\multirput(50,44)(10,0){2}{$\ss\bullet$}\multirput(20,40)(10,0){3}{$\ss\bullet$}\rput(55,35){$\ss\bullet$}
\multirput(20,30)(10,0){5}{$\ss\bullet$}\multirput(30,20)(10,0){3}{$\ss\bullet$}
\multirput(30,10)(10,0){2}{$\ss\bullet$}\rput(35,15){$\ss\bullet$}\endpspicture}\\
&\makebox[19mm]{$\stackrel{\mathrm{RRE,\,VYBE}}{=}$}\frac{\sigma(q\ui)\,\sigma(q^2\ui\uii)\,\sigma(q^2\ui\uiii)}{\sigma(q)\,\sigma(q^4)^2}
\raisebox{-12.5mm}{\pspicture(16,5)(60,52)
\psline[linewidth=0.8pt,linecolor=green](20,40)(20,30)(60,30)
\psline[linewidth=0.8pt,linecolor=green,linearc=2](60,30)(60,38)(50,44)(50,50)
\psline[linewidth=0.8pt,linecolor=red](30,40)(30,20)(50,20)
\psline[linewidth=0.8pt,linecolor=red,linearc=2](50,20)(50,32)(40,38)(40,44)
\psline[linewidth=0.8pt,linecolor=blue](30,10)(40,10)(40,30)\psline[linewidth=0.8pt,linecolor=blue,linearc=2](40,30)(40,32)(60,44)(60,50)
\rput[b](45,31.5){$\ss w$}\rput[b](55,37.5){$\ss v$}
\multirput(20,36)(10,0){2}{\psdots[dotstyle=triangle*,dotscale=1.2](0,0)}
\rput(40,41){\psdots[dotstyle=triangle*,dotscale=1.2](0,0)}\multirput(50,47)(10,0){2}{\psdots[dotstyle=triangle*,dotscale=1.2](0,0)}
\multirput(50,50)(10,0){2}{$\ss\bullet$}\rput(40,44){$\ss\bullet$}\multirput(20,40)(10,0){2}{$\ss\bullet$}\multirput(45,35)(10,6){2}{$\ss\bullet$}
\multirput(20,30)(10,0){5}{$\ss\bullet$}\multirput(30,20)(10,0){3}{$\ss\bullet$}\multirput(30,10)(10,0){2}{$\ss\bullet$}\endpspicture}\\ 
&\makebox[19mm]{$=$}\frac{\sigma(q\ui)\,\sigma(q^2\ui\uii)\,\sigma(q^2\uii\uiv)\,\sigma(q^2\ui\uiii)\,\sigma(q^2\uiii\uiv)}{\sigma(q)\,\sigma(q^4)^4}
\raisebox{-14mm}{\pspicture(-4,-5)(40,34)\psline[linewidth=0.8pt,linecolor=green](0,30)(0,20)(40,20)(40,30)
\psline[linewidth=0.8pt,linecolor=red](10,30)(10,10)(30,10)(30,30)
\psline[linewidth=0.8pt,linecolor=blue](10,0)(20,0)(20,30)\multirput(0,26)(10,0){5}{\psdots[dotstyle=triangle*,dotscale=1.2](0,0)}
\multirput(0,30)(10,0){5}{$\ss\bullet$}\multirput(0,20)(10,0){5}{$\ss\bullet$}\multirput(10,10)(10,0){3}{$\ss\bullet$}
\multirput(10,0)(10,0){2}{$\ss\bullet$}\endpspicture}\\
&\makebox[19mm]{$=$}\frac{\sigma(q\ui)\,\sigma(q^2\ui\uii)\,\sigma(q^2\uii\uiv)\,\sigma(q^2\ui\uiii)\,\sigma(q^2\uiii\uiv)}{\sigma(q)\,\sigma(q^4)^4}\,
\Bigl(\frac{\sigma(q\ubi)}{\sigma(q)}+1\Bigr)\\[0mm]
&\hspace{97mm}\times\bigl(Z_-(\uii,\uiii,\ui)+Z_+(\uii,\uiii,\ui)\bigr)\\[3.5mm]
&\makebox[19mm]{$=$}\frac{\sigma(q\ui)\,(\sigma(q\ubi)+\sigma(q))\,\sigma(q^2\ui\uii)\,\sigma(q^2\uii\uiv)\,
\sigma(q^2\ui\uiii)\,\sigma(q^2\uiii\uiv)}{\sigma(q)^2\,\sigma(q^4)^4}\,Z(\uii,\uiii,\ui),\end{align*}
where the notation is the same as in the examples in the proofs of 
Propositions~\ref{symm} and~\ref{spec2prop}.  Note that in proceeding from the fourth to the fifth
lines above, the only change is that the graph is redrawn and partly recolored (where the recoloring
is justified by the existence of a relation between~$u_1$ and~$u_4$). This is done so that the applications of the 
horizontal form of the Yang--Baxter equation~\eqref{YBE} in the next step can be visualized more easily.\end{proof}

\begin{proposition}\label{spec4prop}
The odd-order DASASM partition function satisfies
\begin{equation}\label{spec4}
Z(u_1,\ldots,u_{n-1},q^2,1)=0.\end{equation}
\end{proposition}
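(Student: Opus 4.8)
The plan is to obtain~\eqref{spec4} as an immediate consequence of Proposition~\ref{spec3prop}, combined with the symmetry in Proposition~\ref{symm}. First I would use the symmetry of $Z(u_1,\ldots,u_{n+1})$ in its first $n$ arguments to move the entry $q^2$ into the first slot, writing $Z(u_1,\ldots,u_{n-1},q^2,1)=Z(q^2,u_1,\ldots,u_{n-1},1)$. The product of the first and last arguments of the right-hand side is $q^2\cdot1=q^2$, which is precisely the hypothesis $u_1u_{n+1}=q^2$ of Proposition~\ref{spec3prop} (with $u_1,\ldots,u_{n-1}$ here playing the roles of $u_2,\ldots,u_n$ there). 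Applying that proposition therefore expresses $Z(q^2,u_1,\ldots,u_{n-1},1)$ as an explicit prefactor multiplied by the lower-order partition function $Z(u_1,\ldots,u_{n-1},q^2)$.

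The crucial point is then that the prefactor supplied by Proposition~\ref{spec3prop} carries the factor $\sigma(q\u_1)+\sigma(q)$; at $u_1=q^2$ one has $q\u_1=q^{-1}$, so this factor becomes $\sigma(q^{-1})+\sigma(q)=(q^{-1}-q)+(q-q^{-1})=0$. Since $Z(u_1,\ldots,u_{n-1},q^2)$ is a perfectly well-defined quantity (a finite sum of products of weights, by~\eqref{Z}), the entire expression vanishes, and this is exactly~\eqref{spec4}.

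I do not expect any real obstacle here beyond careful bookkeeping: one must correctly match the $q^2$ sitting in the second-to-last position of~\eqref{spec4} with the first argument of Proposition~\ref{spec3prop} (which is where the symmetry step is used), and one must note that the vanishing factor is $\sigma(q\u_1)+\sigma(q)$ rather than $\sigma(qu_1)$ — the latter being $\sigma(q^3)$ at $u_1=q^2$, which is generically nonzero. An essentially equivalent route would be to rerun the proof of Proposition~\ref{spec3prop} with $u_1$ set to $q^2$ and $u_{n+1}$ set to $1$ from the outset, whereupon the coefficients of both $Z_+$ and $Z_-$ appearing there collapse to zero; but the one-line deduction from Proposition~\ref{spec3prop} is cleanest, so that is the route I would take.
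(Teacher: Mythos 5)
Your proof is correct and coincides exactly with the first of the two arguments the paper gives for this proposition: use Proposition~\ref{symm} to move the $q^2$ into the first slot, then apply Proposition~\ref{spec3prop} with $u_1=q^2$, $u_{n+1}=1$, where the prefactor vanishes because $\sigma(q\u_1)+\sigma(q)=\sigma(\q)+\sigma(q)=0$ while the remaining factors and the lower-order partition function are finite. (The paper also records an independent direct combinatorial proof, but your route is precisely its primary one.)
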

Note that Proposition~\ref{spec4prop} can be combined with Propositions~\ref{recipth}--\ref{symm} to 
give $Z(u_1,\ldots,u_n,1)=0$ at $u_i=q^{\pm2}$ and $u_i=-q^{\pm2}$, for $i=1,\ldots,n$.
\begin{proof}
This result can be obtained by using Proposition~\ref{spec3prop} (to give $Z(q^2,u_2,\ldots,u_n,1)=0$),
and Proposition~\ref{symm} (to interchange $u_1$ and $u_n$ in $Z(u_1,\ldots,u_n,1)$).

Alternatively, it can be obtained directly, so this approach will also be given here.
Let $v$ denote the second-last vertex in the central column of the graph $\mathcal{T}_n$, i.e., $v=(n,n+1)$.
Now observe that the set of configurations of the six-vertex model on~$\mathcal{T}_n$ (for $n\ge1$) can be partitioned
into subsets of size three (which, incidentally, implies that $|\DASASM(2n+1)|$ is divisible by~3, for $n\ge1$), 
such that the configurations within each subset have the same orientations on all edges, except 
for those incident with $v$ from the left, below and the right.
In particular, if the edge incident with $v$ from above has orientation~$a$,
then the orientations $l$, $b$ and $r$ of the edges incident with $v$ from the left, below and the right, respectively,
are $(l,b,r)=$ $(a,\tilde{a},\tilde{a})$, $(\tilde{a},\tilde{a},a)$ or $(\tilde{a},a,\tilde{a})$
(where orientations are taken as in or out, with respect to $v$, and $\tilde{a}$ is the reversal of~$a$).
It can now be seen, using~\eqref{Z} and applying~\eqref{Wred} to the bulk weight of $v$, that the contribution to $Z(u_1,\ldots,u_{n-1},q^2,1)$ 
of the case $(a,\tilde{a},\tilde{a})$ is zero, 
while the contributions of the cases $(\tilde{a},\tilde{a},a)$ and $(\tilde{a},a,\tilde{a})$ cancel, since
the only vertex at which their weights differ is $(n,n+2)$, with the right boundary 
weights for that vertex being~$1$ and $\sigma(q\,\q^2)/\sigma(q)=-1$. The result~\eqref{spec4} now follows
immediately.
\end{proof}

\subsection{Proof of Theorem~\ref{DASASMFulldetTh}}\label{DASASMFulldetThPr}
Theorem~\ref{DASASMFulldetTh} will now be proved, using results from Sections~\ref{GenProp} and~\ref{Spec}.

Consider a family of functions $X(u_1,\ldots,u_{n+1})$ which satisfy the following properties.
\begin{enumerate}
\item $X(u_1)=1$.
\item $X(u_1,\ldots,u_{n+1})$ is a Laurent polynomial in $u_{n+1}$ of lower degree at least~$-n$ and upper degree at most $n$.
\item If $u_1u_{n+1}=q^2$, then\\
$\displaystyle X(u_1,\ldots,u_{n+1})=
\frac{\sigma(qu_1)\,(\sigma(q\u_1)+\sigma(q))\prod_{i=2}^n\sigma(q^2u_1u_i)\sigma(q^2u_iu_{n+1})}{\sigma(q)^2\,\sigma(q^4)^{2n-2}}
\,X(u_2,\ldots,u_n,u_1).$
\item $X(\u_1,\ldots,\u_{n+1})=X(u_1,\ldots,u_{n+1})$.
\item $X(u_1,\ldots,u_{n+1})$ is even in $u_i$, for each $i=1,\ldots,n$.
\item $X(u_1,\ldots,u_{n+1})$ is symmetric in $u_1,\ldots,u_n$.
\end{enumerate}

It will first be shown that $X(u_1,\ldots,u_{n+1})$ is uniquely determined by these properties.

By~(ii), $X(u_1,\ldots,u_{n+1})$ is uniquely determined if it is known at $2n+1$ values of $u_{n+1}$.
It can be seen that combining~(iii) with (iv)--(vi) leads to expressions for $X(u_1,\ldots,u_{n+1})$
at~$4n$ values of $u_{n+1}$, i.e., at $u_{n+1}=q^{\pm2}\u_i$ and $u_{n+1}=-q^{\pm2}\u_i$, for each $i=1,\ldots,n$.
In particular,~(iii) consists of an expression for $X(u_1,\ldots,u_n,q^2\u_1)$ in terms of 
$X(u_2,\ldots,u_n,u_1)$.  Replacing $u_1,\ldots,u_n$ by $\u_1,\ldots,\u_n$, and applying~(iv), 
then gives an expression for $X(u_1,\ldots,u_n,\q^2\u_1)$ in terms of $X(u_2,\ldots,u_n,u_1)$.
Replacing $u_1$ by $-u_1$ in the previous two cases, and applying~(v), then gives expressions
for $X(u_1,\ldots,u_n,-q^{\pm2}\u_1)$ in terms of $X(u_2,\ldots,u_n,-u_1)$.  Finally, interchanging
$u_1$ and $u_i$ for any $i\in\{2,\ldots,n\}$ in the previous four cases, and applying (vi), 
gives expressions for $X(u_1,\ldots,u_n,q^{\pm2}\u_i)$ in terms of $X(u_1,\ldots,u_{i-1},u_{i+1},\ldots,u_n,u_i)$,
and for $X(u_1,\ldots,u_n,-q^{\pm2}\u_i)$ in terms of $X(u_1,\ldots,u_{i-1},u_{i+1},\ldots,u_n,-u_i)$.

Since $X(u_1)$ is fully known by (i),
it follows by induction on $n$ that $X(u_1,\ldots,u_{n+1})$ is known at $2n+1$ (or, in fact, $4n$) values of~$u_{n+1}$.
Therefore, $X(u_1,\ldots,u_{n+1})$ is uniquely determined by properties (i)--(vi).

The validity of \eqref{DASASMFulldet} in Theorem~\ref{DASASMFulldetTh} will now be verified by
showing that both sides satisfy properties (i)--(vi).

If $X(u_1,\ldots,u_{n+1})$ is taken to be the odd-order DASASM partition function $Z(u_1,\ldots,u_{n+1})$,
i.e., the LHS of~\eqref{DASASMFulldet}, then the required properties are satisfied, since (i) follows from~\eqref{Z} with $n=0$,
while (ii)--(vi) follow from Propositions~\ref{recipth},~\ref{degth},~\ref{symm} and~\ref{spec3prop}.

For the remainder of this section, let $X(u_1,\ldots,u_{n+1})$ be the RHS of~\eqref{DASASMFulldet},
and define
\begin{align}
\notag F(u_1,\ldots,u_{n+1})&=\frac{\sigma(q^2)^n}{\sigma(q)^{2n}\,\sigma(q^4)^{n^2}}
\prod_{i=1}^n\frac{\sigma(u_i)\sigma(qu_i)\sigma(q\u_i)\sigma(q^2u_iu_{n+1})\sigma(q^2\u_i\u_{n+1})}{\sigma(u_i\u_{n+1})}\\
\notag&\hspace{60mm}\times\prod_{1\le i<j\le n}\biggl(\frac{\sigma(q^2u_iu_j)\sigma(q^2\u_i\u_j)}{\sigma(u_i\u_j)}\biggr)^2,\\
\notag D(u_1,\ldots,u_{n+1})&=\det_{1\le i,j\le n+1}\left(\begin{cases}\frac{q^2+\q^2+u_i^2+\u_j^2}{\sigma(q^2u_iu_j)\,
\sigma(q^2\u_i\u_j)},&i\le n\\[1.5mm]
\frac{1}{u_j^2-1},&i=n+1\end{cases}\right),\\
\notag Y(u_1,\ldots,u_{n+1})&=(u_{n+1}-1)\,F(u_1,\ldots,u_{n+1})\,D(u_1,\ldots,u_{n+1}),\\
\label{FDY}\widehat{Y}(u_1,\ldots,u_{n+1})&=(u_{n+1}^2-1)\,F(u_1,\ldots,u_{n+1})\,D(u_1,\ldots,u_{n+1}),\end{align}
so that
\begin{align}\notag X(u_1,\ldots,u_{n+1})&=Y(u_1,\ldots,u_{n+1})\,+\,Y(\u_1,\ldots,\u_{n+1})\\
\label{YY}&=\frac{\widehat{Y}(u_1,\ldots,u_{n+1})}{u_{n+1}+1}\,+\,\frac{\widehat{Y}(\u_1,\ldots,\u_{n+1})}{\u_{n+1}+1}.\end{align}

Then $F(u_1)=1$, $D(u_1)=\frac{1}{u_1^2-1}$ and $Y(u_1)=\frac{1}{u_1+1}$, from which it follows that~(i) is satisfied.

Proceeding to~(ii), it can be seen that $(u_{n+1}^2-1)
\prod_{i=1}^n\sigma(q^2u_iu_{n+1})\sigma(q^2\u_i\u_{n+1})\,D(u_1,\ldots,u_{n+1})$ is an (even) Laurent polynomial in $u_{n+1}$
of lower degree at least~$-2n$ and upper degree at most~$2n$.
Also, $D(u_1,\ldots,u_n,\pm u_i)=0$ for each $i=1,\ldots,n$, since
columns $i$ and $n+1$ of the matrix are then equal. It follows that
$(u_{n+1}^2-1)\prod_{i=1}^n\sigma(q^2u_iu_{n+1})\sigma(q^2\u_i\u_{n+1})/
\sigma(u_i\u_{n+1})\,D(u_1,\ldots,u_{n+1})$,
and hence also $\widehat{Y}(u_1,\ldots,u_{n+1})$, is a Laurent polynomial in $u_{n+1}$ of lower degree at least~$-n$ and upper degree at most $n$.
Now note that if a function $f(x)$ is a Laurent polynomial of lower degree at least~$-n$ and upper degree at most $n$,
then so is the function $g(x)=f(x)/(x+1)+f(\bar{x})/(\bar{x}+1)$, since $g(x)=(f(x)+xf(\bar{x}))/(x+1)$, where $f(x)+xf(\bar{x})$
is a Laurent polynomial of lower degree at least~$-n$ and upper degree at most $n+1$ which vanishes at $x=-1$.
Therefore, it follows from the previous observations and the last expression of~\eqref{YY}
that (ii) is satisfied.

Proceeding to~(iii), consider $Y(u_1^{\pm1},\ldots,u_{n+1}^{\pm1})$, and multiply the first row of the matrix in
$D(u_1^{\pm1},\ldots,u_{n+1}^{\pm1})$
by the factor $\sigma(q^2\u_1\u_{n+1})$ from $F(u_1,\ldots,u_{n+1})$ ($=F(\u_1,\ldots,\u_{n+1})$).  Now set $u_1u_{n+1}=q^2$,
which leads to the first row becoming
$(0,\ldots,0,(q^2+\q^2+u_1^{\pm2}+q^{\mp4}u_1^{\pm2})/\sigma(q^4))$.  It then follows, after rearranging
and canceling certain products and signs, that for $u_1u_{n+1}=q^2$,
\begin{equation*}Y(u_1^{\pm1},\ldots,u_{n+1}^{\pm1})=
\frac{\sigma(qu_1)\,(\sigma(q\u_1)+\sigma(q))\prod_{i=2}^n\sigma(q^2u_1u_i)\sigma(q^2u_iu_{n+1})}{\sigma(q)^2\,\sigma(q^4)^{2n-2}}
\,Y(u_2^{\pm1},\ldots,u_n^{\pm1},u_1^{\pm1}).\end{equation*}
Therefore, (iii) is satisfied, due to the first equation of~\eqref{YY}.

Proceeding to the remaining properties,~(iv) is satisfied due to~\eqref{YY},~(v) is satisfied since
$F(u_1,\ldots,u_{n+1})$ and all entries of the matrix in $D(u_1,\ldots,u_{n+1})$ are even in $u_i$, for each $i=1,\ldots,n$,
and~(vi) is satisfied since if $u_i$ and $u_j$ are interchanged, then
$F(u_1,\ldots,u_{n+1})$ is unchanged, while rows $i$ and $j$, and columns $i$ and $j$, are interchanged in the matrix in
$D(u_1,\ldots,u_{n+1})$, for all distinct $i,j\in\{1,\ldots,n\}$.

\subsection{Alternative proofs of Theorem~\ref{DASASMFulldetTh} and Corollary~\ref{DASASMdetCoroll}}\label{DASASMFulldetThPrAlt}
Alternative proofs of Theorem~\ref{DASASMFulldetTh} and Corollary~\ref{DASASMdetCoroll} will now be sketched.  Essentially,
these involve regarding the odd-order DASASM partition function $Z(u_1,\ldots,u_{n+1})$ as a Laurent polynomial in $u_1$,
rather than a Laurent polynomial in $u_{n+1}$, as done in Section~\ref{DASASMFulldetThPr}.

Consider a family of functions $X(u_1,\ldots,u_{n+1})$ which satisfy properties (i) and (iii)--(vi) of Section~\ref{DASASMFulldetThPr},
together with the following properties.
\begin{enumerate}
\setcounter{enumi}{6}
\item $X(u_1,u_2)$ is given by the RHS of~\eqref{Z1}.
\item $X(u_1,\ldots,u_{n+1})$ is a Laurent polynomial in $u_1$ of lower degree at least~$-2n$ and upper degree at most $2n$.
\item $\displaystyle X(q,u_2,\ldots,u_{n+1})=
\frac{(q+\q)\prod_{i=2}^n\sigma(q^3u_i)^2\,\sigma(q^3u_{n+1})}{\sigma(q^4)^{2n-1}}\,X(u_2,\ldots,u_{n+1}).$
\item If $u_1u_2=q^2$, then\\
$\displaystyle X(u_1,u_2,\ldots,u_{n+1})=\\
\hspace*{10mm}\frac{\sigma(u_1)\sigma(qu_1)\sigma(u_2)\sigma(qu_2)\prod_{i=3}^n\bigl(\sigma(q^2u_1u_i)\sigma(q^2u_2u_i)\bigr)^2\,
\sigma(q^2u_1u_{n+1})\sigma(q^2u_2u_{n+1})}{\sigma(q)^4\,\sigma(q^4)^{2(2n-3)}}\\
\hspace*{125mm}\times X(u_3,\ldots,u_{n+1}).$
\end{enumerate}

It follows, using an argument similar to that in the first part of Section~\ref{DASASMFulldetThPr},
that $X(u_1,\ldots,u_{n+1})$ is uniquely determined by these properties.  In particular,
by~(v) (with $i=1$) and (viii), $X(u_1,\ldots,u_{n+1})$ is uniquely determined if it is known at $2n+1$ values of $u_1^2$.
Combining~(iii),~(ix) and~(x) with~(iv) and~(vi) gives expressions for $X(u_1,\ldots,u_{n+1})$
at~$2n+2$ values of $u_1^2$, i.e., at $u_1^2=q^{\pm2}$ and $u_1^2=q^{\pm4}\u_i^2$, for $i=2,\ldots,n+1$.
Using these expressions, and applying induction on~$n$ with~(i) and~(vii),
it follows that $X(u_1,\ldots,u_{n+1})$ is known at $2n+1$ (or, in fact, $2n+2$) values of~$u_1^2$, as required.

If $X(u_1,\ldots,u_{n+1})$ is taken to be the odd-order DASASM partition function $Z(u_1,\ldots,u_{n+1})$,
then as already found in Section~\ref{DASASMFulldetThPr},~(i) and (iii)--(vi) are satisfied.
Furthermore, (vii)--(x) are satisfied, since (vii) is given by~\eqref{Z1},
while (viii)--(x) follow from Propositions~\ref{degth},~\ref{spec1prop} and~\ref{spec2prop}.

If $X(u_1,\ldots,u_{n+1})$ is now taken to be the RHS of~\eqref{DASASMFulldet},
then as already found in Section~\ref{DASASMFulldetThPr},~(i) and (iii)--(vi) are satisfied.
It can also be shown, using arguments similar to those of Section~\ref{DASASMFulldetThPr},
that (vii)--(x) are satisfied, from which the required equality
in Theorem~\ref{DASASMFulldetTh} then follows.

Note that when showing that~(viii) is satisfied by the RHS of~\eqref{DASASMFulldet},
it can be verified straightforwardly that the function
\begin{multline*}P(u_1,\ldots,u_{n+1})=\\
\frac{\sigma(u_1)\sigma(qu_1)\sigma(q\u_1)\sigma(q^2u_1u_{n+1})\sigma(q^2\u_1\u_{n+1})
\prod_{i=2}^n(\sigma(q^2u_1u_i)\sigma(q^2\u_1\u_i))^2\,D(u_1,\ldots,u_{n+1})}{\sigma(u_1\u_{n+1})}\end{multline*}
is a Laurent polynomial in~$u_1$.
However, it then needs to be shown that $P(u_1,\ldots,u_{n+1})/$ $\prod_{i=2}^n\sigma(u_1\u_i)^2$ is also a Laurent polynomial in~$u_1$.
This can be done by introducing
\begin{multline*}\label{LP2}P'(v_1,\ldots,v_n;u_1,\ldots,u_{n+1})=\\
\frac{\sigma(u_1)\sigma(q^2v_1u_1)\sigma(q^2\bar{v}_1\u_1)
\prod_{i=2}^{n+1}\sigma(q^2v_1u_i)\sigma(q^2\bar{v}_1\u_i)\prod_{i=2}^n\sigma(q^2v_iu_1)\sigma(q^2\bar{v}_i\u_1)}
{\sigma(u_1\u_{n+1})}\\
\times\det_{1\le i,j\le n+1}\left(\begin{cases}\frac{q^2+\q^2+v_i^2+\u_j^2}
{\sigma(q^2v_iu_j)\,\sigma(q^2\bar{v}_i\u_j)},&i\le n\\[1.5mm]
\frac{1}{u_j^2-1},&i=n+1\end{cases}\right).\end{multline*}
It can then be checked that $P'(v_1,\ldots,v_n;u_1,\ldots,u_{n+1})$ is a Laurent polynomial in~$u_1$ and~$v_1$, which
vanishes at $u_1=\pm u_i$ and $v_1=\pm v_i$ for each $i=2,\ldots,n$,
so that $P'(v_1,\ldots,v_n;u_1,\ldots,u_{n+1})/$ $\prod_{i=2}^n\sigma(u_1\u_i)\sigma(v_1\bar{v}_i)$ is also
a Laurent polynomial in~$u_1$ and~$v_1$.
Furthermore, it can be checked that  $P(u_1,\ldots,u_{n+1})=P'(u_1,\ldots,u_n;u_1,\ldots,u_{n+1})/((qu_1+\q\u_1)(q\u_1+\q u_1))$
and that $P'(u_1,\ldots,u_n;$ $u_1,\ldots,u_{n+1})$ vanishes at $u_1^2=-q^{\pm2}$, from which the required result follows.

Proceeding to the alternative proof of Corollary~\ref{DASASMdetCoroll}, 
let $X(u_1,\ldots,u_n,1)$ satisfy properties (i) and (iv)-(vi) of Section~\ref{DASASMFulldetThPr},
and properties (vii)--(x) of this section, with $u_{n+1}$ set to~$1$ in each case, together
with a modification of property (iii) of Section~\ref{DASASMFulldetThPr} given by
\begin{itemize}
\item[(iii$'$)] $X(u_1,\ldots,u_{n-1},q^2,1)=0$.
\end{itemize}
Then, using the same argument as used for the alternative proof of Theorem~\ref{DASASMFulldetTh},
$X(u_1,\ldots,u_n,1)$ is uniquely determined.  In particular, combining~(iii$'$),~(ix) and~(x) with~(iv) and~(vi) 
gives expressions for $X(u_1,\ldots,u_n,1)$
at~$2n+2$ values of $u_1^2$, i.e., at $u_1^2=q^{\pm2}$, $u_1^2=q^{\pm4}$ and $u_1^2=q^{\pm4}\u_i^2$, for $i=2,\ldots,n$.

If $X(u_1,\ldots,u_n,1)$ is taken to be either the LHS or RHS of~\eqref{DASASMdet}, then, as found previously 
for the case of arbitrary $u_{n+1}$,~(i) and (iv)--(x) are satisfied.
Furthermore, (iii$'$) is satisfied by the LHS due to Proposition~\ref{spec4prop},
and by the RHS due to the presence of the term $\sigma(q^2\u_n)$ on the RHS.
The required equality in Corollary~\ref{DASASMdetCoroll} now follows.

Note that this can be regarded as a shorter proof of Corollary~\ref{DASASMdetCoroll} than that of Sections~\ref{MainResults} and~\ref{DASASMFulldetThPr} 
since, with regards to the LHS of~\eqref{DASASMdet}, Proposition~\ref{spec3prop} has been replaced by 
Propositions~\ref{spec1prop},~\ref{spec2prop} and~\ref{spec4prop}, each of which has a simpler
proof than Proposition~\ref{spec3prop}, and with regards to the RHS of~\eqref{DASASMdet},
computations involving the more complicated RHS of~\eqref{DASASMFulldet} have now been avoided.

\subsection{Proof of Theorem~\ref{ZDASASMFullSchurTh}}\label{ZDASASMFullSchurPr}
Theorem~\ref{ZDASASMFullSchurTh} will now be proved, using Theorem~\ref{DASASMFulldetTh}.

In particular, the following determinant identity of Okada~\cite[Thm.~4.2]{Oka98} will be applied to~\eqref{DASASMFulldet} at $q=e^{i\pi/6}$.
For all $a_1,x_1,b_1,y_1,\ldots,a_k,x_k,b_k,y_k$,
\begin{equation}\label{OkadaId}
\det_{1\le i,j\le k}\biggl(\frac{a_i-b_j}{x_i-y_j}\biggr)=\frac{(-1)^{k(k+1)/2}}{\prod_{i,j=1}^k(x_i-y_j)}\:
\det\begin{pmatrix}
1&a_1&x_1&a_1x_1&\!\ldots\!&x_1^{k-1}&a_1x_1^{k-1}\\
1&b_1&y_1&b_1y_1&\!\ldots\!&y_1^{k-1}&b_1y_1^{k-1}\\
\vdots&\vdots&\vdots&\vdots&\!\ddots\!&\vdots&\vdots\\
1&a_k&x_k&a_kx_k&\!\ldots\!&x_k^{k-1}&a_kx_k^{k-1}\\
1&b_k&y_k&b_ky_k&\!\ldots\!&y_k^{k-1}&b_ky_k^{k-1}\end{pmatrix}.\end{equation}

Let $Y(u_1,\ldots,u_{n+1})$ be defined as in~\eqref{FDY}, so that
\begin{equation}\label{DASASMFulldet1}Z(u_1,\ldots,u_{n+1})=
Y(u_1,\ldots,u_{n+1})+Y(\u_1,\ldots,\u_{n+1}).\end{equation}
Setting $q=e^{i\pi/6}$ gives
\begin{multline}\label{combdet1}Y(u_1,\ldots,u_{n+1})\big|_{q=e^{i\pi/6}}=
\frac{(u_{n+1}-1)\,u_{n+1}^n}{3^{n(n-1)/2}}
\prod_{i=1}^n\frac{(u_i^2-1)(u_i^2-1+\u_i^2)(u_i^2u_{n+1}^2+1+\u_i^2\u_{n+1}^2)}{u_i^2-u_{n+1}^2}\\[1.5mm]
\times\,\prod_{1\le i<j\le n}\frac{(u_i^2u_j^2+1+\u_i^2\u_j^2)^2}{(u_i^2-u_j^2)(\u_i^2-\u_j^2)}\;
\det_{1\le i,j\le n+1}\left(\begin{cases}\frac{u_i^2+1+\u_j^2}{u_i^2u_j^2+1+\u_i^2\u_j^2},&i\le n\\[1.5mm]
\frac{1}{u_j^2-1},&i=n+1\end{cases}\right).\end{multline}
Now observe that
\begin{equation*}\frac{u_i^2+1+\u_j^2}{u_i^2u_j^2+1+\u_i^2\u_j^2}=
u_i^2\u_j^2\,\frac{u_i^2+u_i^4-(\u_j^2+\u_j^4)}{u_i^6-\u_j^6},\qquad
\frac{1}{u_j^2-1}=-\u_j^2\:\frac{-1-(\u_j^2+\u_j^4)}{1-\u_j^6},\end{equation*}
and apply~\eqref{OkadaId} to~\eqref{combdet1} with
\begin{equation*}a_i=\begin{cases}u_i^2+u_i^4,&i\le n,\\-1,&i=n+1,\end{cases}\quad
x_i=\begin{cases}u_i^6,&i\le n,\\1,&i=n+1,\end{cases}\quad
b_j=\u_j^2+\u_j^4,\quad
y_j=\u_j^6,\quad k=n+1,\end{equation*}
to give
\begin{multline}\label{combdet2}Y(u_1,\ldots,u_{n+1})\big|_{q=e^{i\pi/6}}=
\frac{(-1)^{n(n-1)/2}}{u_{n+1}^2\,\prod_{i=1}^n\prod_{j=1}^{n+1}(u_i^6-\u_j^6)\,\prod_{i=1}^{n+1}(1-\u_i^6)}\\
\times\,\frac{(u_{n+1}-1)\,u_{n+1}^n}{3^{n(n-1)/2}}
\prod_{i=1}^n\frac{(u_i^2-1)(u_i^2-1+\u_i^2)(u_i^2u_{n+1}^2+1+\u_i^2\u_{n+1}^2)}{u_i^2-u_{n+1}^2}
\prod_{1\le i<j\le n}\frac{(u_i^2u_j^2+1+\u_i^2\u_j^2)^2}{(u_i^2-u_j^2)(\u_i^2-\u_j^2)}\\
\times\,\det\begin{pmatrix}
1&u_1^2+u_1^4&u_1^6&u_1^8+u_1^{10}&\!\ldots\!&u_1^{6n}&u_1^{6n+2}+u_1^{6n+4}\\
1&\u_1^2+\u_1^4&\u_1^6&\u_1^8+\u_1^{10}&\!\ldots\!&\u_1^{6n}&\u_1^{6n+2}+\u_1^{6n+4}\\
\vdots&\vdots&\vdots&\vdots&\!\ddots\!&\vdots&\vdots\\
1&u_n^2+u_n^4&u_n^6&u_n^8+u_n^{10}&\!\ldots\!&u_n^{6n}&u_n^{6n+2}+u_n^{6n+4}\\
1&\u_n^2+\u_n^4&\u_n^6&\u_n^8+\u_n^{10}&\!\ldots\!&\u_n^{6n}&\u_n^{6n+2}+\u_n^{6n+4}\\
1&-1&1&-1&\!\ldots\!&1&-1\\
1&\u_{n+1}^2+\u_{n+1}^4&\u_{n+1}^6&\u_{n+1}^8+\u_{n+1}^{10}&\!\ldots\!&\u_{n+1}^{6n}&\u_{n+1}^{6n+2}+\u_{n+1}^{6n+4}\\
\end{pmatrix}.\end{multline}
The determinant in~\eqref{combdet2} is equal to
\begin{multline}\label{combdet3}
\det\begin{pmatrix}
1&1+u_1^2+u_1^4&u_1^2+u_1^4+u_1^6&u_1^6+u_1^8+u_1^{10}&\!\ldots\!&u_1^{6n}+u_1^{6n+2}+u_1^{6n+4}\\
1&1+\u_1^2+\u_1^4&\u_1^2+\u_1^4+\u_1^6&\u_1^6+\u_1^8+\u_1^{10}&\!\ldots\!&\u_1^{6n}+\u_1^{6n+2}+\u_1^{6n+4}\\
\vdots&\vdots&\vdots&\vdots&\!\ddots\!&\vdots\\
1&1+u_n^2+u_n^4&u_n^2+u_n^4+u_n^6&u_n^6+u_n^8+u_n^{10}&\!\ldots\!&u_n^{6n}+u_n^{6n+2}+u_n^{6n+4}\\
1&1+\u_n^2+\u_n^4&\u_n^2+\u_n^4+\u_n^6&\u_n^6+\u_n^8+\u_n^{10}&\!\ldots\!&\u_n^{6n}+\u_n^{6n+2}+\u_n^{6n+4}\\
1&0&0&0&\!\ldots\!&0\\
1&1+\u_{n+1}^2+\u_{n+1}^4&\u_{n+1}^2+\u_{n+1}^4+\u_{n+1}^6&\u_{n+1}^6+\u_{n+1}^8+\u_{n+1}^{10}&\!\ldots\!&\u_{n+1}^{6n}+\u_{n+1}^{6n+2}+\u_{n+1}^{6n+4}\\
\end{pmatrix}\\
\shoveleft{=\prod_{i=1}^n(1+u_i^2+u_i^4)\;\prod_{i=1}^{n+1}(1+\u_i^2+\u_i^4)\;
\det\begin{pmatrix}
1&u_1^2&u_1^6&u_1^8&\!\ldots\!&u_1^{6n-4}&u_1^{6n}\\
1&\u_1^2&\u_1^6&\u_1^8&\!\ldots\!&\u_1^{6n-4}&\u_1^{6n}\\
\vdots&\vdots&\vdots&\vdots&\!\ddots\!&\vdots&\vdots\\
1&u_n^2&u_n^6&u_n^8&\!\ldots\!&u_n^{6n-4}&u_n^{6n}\\
1&\u_n^2&\u_n^6&\u_n^8&\!\ldots\!&\u_n^{6n-4}&\u_n^{6n}\\
1&\u_{n+1}^2&\u_{n+1}^6&\u_{n+1}^8&\!\ldots\!&\u_{n+1}^{6n-4}&\u_{n+1}^{6n}\\
\end{pmatrix}}\\
\shoveleft{=(-1)^n\,\prod_{i=1}^n(1+u_i^2+u_i^4)(u_i^2-\u_{n+1}^2)(\u_i^2-\u_{n+1}^2)(u_i^2-\u_i^2)\;\prod_{i=1}^{n+1}(1+\u_i^2+\u_i^4)}\\
\times\,\prod_{1\le i<j\le n}\!(u_i^2-u_j^2)(u_i^2-\u_j^2)(\u_i^2-u_j^2)(\u_i^2-\u_j^2)\;
s_{(n,n-1,n-1,\ldots,2,2,1,1)}(u_1^2,\u_1^2,\ldots,u_n^2,\u_n^2,\u_{n+1}^2),\end{multline}
where the first expression is obtained by adding column $i-1$ to
column $i$ in the matrix in~\eqref{combdet2}, for each $i=2,\ldots,2n+2$,
and the final expression is obtained by reversing the order of the columns of the matrix and applying~\eqref{Schurdet}.

Replacing the determinant in~\eqref{combdet2} by the final expression in~\eqref{combdet3}, and canceling certain products and signs,
now gives
\begin{equation}
Y(u_1,\ldots,u_{n+1})\big|_{q=e^{i\pi/6}}=\textstyle3^{-n(n-1)/2}\,\frac{u_{n+1}^n}{u_{n+1}+1}\,
s_{(n,n-1,n-1,\ldots,2,2,1,1)}(u_1^2,\u_1^2,\ldots,u_n^2,\u_n^2,\u_{n+1}^2),
\end{equation}
from which~\eqref{ZDASASMFullSchur} follows using~\eqref{DASASMFulldet1}.

Finally, note that an alternative proof of Theorem~\ref{ZDASASMFullSchurTh} would be to
take $X(u_1,\ldots,u_{n+1})$ as the RHS of~\eqref{ZDASASMFullSchur}, and show that properties (i)--(vi)
of Section~\ref{DASASMFulldetThPr}, or properties (i) and (iii)--(vi) of Section~\ref{DASASMFulldetThPr} and
(vii)-(x) of Section~\ref{DASASMFulldetThPrAlt}, with $q=e^{i\pi/6}$ in (iii), (ix) and (x), are then satisfied.

\section{Discussion}
In this final section, some further matters related to the main results of this paper are discussed.
\subsection{Factorization of the Schur Function in Corollary~\ref{ZDASASMSchurTh}}\label{Schurfact}
As shown in Corollary~\ref{ZDASASMSchurTh}, the odd-order DASASM partition function at $u_{n+1}=1$ and $q=e^{i\pi/6}$
is, up to a simple factor, given by the Schur function $s_{(n,n-1,n-1,\ldots,2,2,1,1)}(u_1^2,\u_1^2,\ldots,u_n^2,\u_n^2,1)$.

In~\cite{AyyBehFis17}, it will be shown that this function is a member of a collection of Schur functions which can be
factorized in terms of characters of irreducible representations of orthogonal and symplectic groups.
This collection also includes certain Schur functions indexed by rectangular shapes,
for which the factorizations were obtained by Ciucu and Krattenthaler~\cite[Thms.~3.1~\&~3.2]{CiuKra09}.

Using the same notation for
odd orthogonal and symplectic characters as that of Ciucu and Krattenthaler~\cite[Eqs.~(3.7)~\&~(3.9)]{CiuKra09},
the factorization of the Schur function in~\eqref{ZDASASMSchur} is explicitly
\begin{align}\notag&s_{(2n,2n-1,2n-1,\ldots,2,2,1,1)}(u_1,\u_1,\ldots,u_{2n},\u_{2n},1)=\\
\notag&\quad\textstyle\prod_{i=1}^{2n}(u_i+1+\u_i)\:
sp_{(n-1,n-1,\ldots,2,2,1,1)}(u_1,\ldots,u_{2n})\:so_{(n,n-1,n-1,\ldots,2,2,1,1)}(u_1,\ldots,u_{2n}),\\
\notag&s_{(2n+1,2n,2n,\ldots,2,2,1,1)}(u_1,\u_1,\ldots,u_{2n+1},\u_{2n+1},1)=\\
\label{DASASMSchurfact}&\quad\textstyle\prod_{i=1}^{2n+1}(u_i+1+\u_i)\:
sp_{(n,n-1,n-1,\ldots,2,2,1,1)}(u_1,\ldots,u_{2n+1})\:so_{(n,n,\ldots,2,2,1,1)}(u_1,\ldots,u_{2n+1}).
\end{align}
It can now be seen, which was not explicitly apparent in~\eqref{ZDASASMSchur}, that $\prod_{i=1}^n(u_i^2+1+\u_i^2)$
is a factor of $Z(u_1,\ldots,u_n,1)\big|_{q=e^{i\pi/6}}$.

\subsection{The odd-order DASASM partition function at special values of $q$}
As shown by, for example, Kuperberg~\cite{Kup02} and Okada~\cite{Oka06},
the partition functions of cases of the six-vertex model associated with symmetry classes of ASMs
often simplify if a particular parameter, which corresponds to the parameter $q$ in this paper,
is assigned to certain roots of unity.

For odd-order DASASMs, one such assignment is $q=e^{i\pi/6}$, which has been
considered in detail in Sections~\ref{MainResults} and~\ref{ZDASASMFullSchurPr},
and which is associated with straight enumeration.

Another such assignment is $q=e^{i\pi/4}$, for which
\begin{equation}\label{iPi4}
\bigl((-i\,\sigma(q^4))^{n^2}\,Z(u_1,\ldots,u_{n+1})\bigr)\big|_{q\rightarrow e^{i\pi/4}}=
\prod_{i=1}^n(u_i+\u_i)(u_iu_{n+1}+\u_i\u_{n+1})\prod_{1\le i<j\le n}(u_iu_j+\u_i\u_j)^2.\end{equation}
This result can be proved directly, as follows. First, observe that
multiplying $Z(u_1,\ldots,u_{n+1})$ by $(-i\,\sigma(q^4))^{n^2}$ is equivalent to renormalizing each of the~$n^2$ bulk weights
in each term of~\eqref{Z} by a factor of $-i\,\sigma(q^4)$.  Setting $q\rightarrow e^{i\pi/4}$, these renormalized weights are
$(-i\,\sigma(q^4)W(c,u))|_{q\rightarrow e^{i\pi/4}}=u+\u$,
for \rule[-3mm]{0mm}{7.3mm}$c=\Wi,\,\Wii,\,\Wiii,\,\Wiv$, and $(-i\,\sigma(q^4)W(c,u))|_{q\rightarrow e^{i\pi/4}}=0$, for $c=\Wv,\,\Wvi$.
It now follows from the bijection~\eqref{bij}, and the properties of odd DASASM triangles,
that the $C$-dependent term of~\eqref{Z} associated with~$\bigl((-i\,\sigma(q^4))^{n^2}\,Z(u_1,\ldots,u_{n+1})\bigr)\big|_{q\rightarrow e^{i\pi/4}}$ is zero
unless the odd DASASM triangle which corresponds to~$C$ consists of
all 0's, except for a single $1$ at either the start or end of each row. Observing that
a $0$ at the start or end of row $i$, for $i=1,\ldots,n$, is associated with weight $\sigma(qu_i)/\sigma(q)$ or $\sigma(q\u_i)/\sigma(q)$, respectively,
and that $1$'s are associated with weight~$1$, it can then be seen that the relevant sum over $2^n$ configurations
is $\prod_{i=1}^n\bigl(\sigma(qu_i)/\sigma(q)+\sigma(q\u_i)/\sigma(q)\bigr)(u_iu_{n+1}+\u_i\u_{n+1})\,\prod_{1\le i<j\le n}(u_iu_j+\u_i\u_j)^2$,
which gives~\eqref{iPi4}.  Note that, since the central entry of the relevant DASASMs in this case is always~1,
it also follows that $\bigl(\sigma(q^4)^{n^2}\,Z_+(u_1,\ldots,u_{n+1})\bigr)\big|_{q\rightarrow e^{i\pi/4}}=
\bigl(\sigma(q^4)^{n^2}\,Z(u_1,\ldots,u_{n+1})\bigr)\big|_{q\rightarrow e^{i\pi/4}}$ and
$\bigl(\sigma(q^4)^{n^2}\,Z_-(u_1,\ldots,u_{n+1})\bigr)\big|_{q\rightarrow e^{i\pi/4}}=0$.

We have also obtained some results and conjectures for the odd-order DASASM partition function at $q=e^{i\pi/3}$.
In this case, $W(c,1)|_{q=e^{i\pi/3}}=-1$,
for \rule[-3mm]{0mm}{7.3mm}$c=\Wi,\,\Wii,\,\Wiii,\,\Wiv$, and $W(c,1)|_{q=e^{i\pi/3}}=1$, for $c=\Wv,\,\Wvi$.
Hence, due to~\eqref{bij} and the second equation of~\eqref{comb},
$Z(1,\ldots,1)\big|_{q=e^{i\pi/3}}$ or $Z_\pm(1,\ldots,1)\big|_{q=e^{i\pi/3}}$
correspond to enumerations in which $A\in\DASASM(2n+1)$ is weighted by
$(-1)^{M(A)}$, where $M(A)$ is the number of $0$'s among the entries $A_{ij}$ for $i=1,\ldots,n$, $j=i+1,\ldots,2n+1-i$,
i.e., the number of $0$'s in the odd DASASM triangle associated with $A$ which are
not at the start or end of a row.  We conjecture that these weighted enumerations are given by
\begin{align}\notag\sum_{A\in\DASASM(2n+1)}(-1)^{M(A)}&=(-1)^{n(n-1)/2}\,V_n,\\
\sum_{A\in\DASASM_\pm(2n+1)}(-1)^{M(A)}&=\textstyle\frac{1}{2}(-1)^{n(n-1)/2}\:\bigl((-1)^n\mp3\bigr)\,V_n,\end{align}
where\rule[-1.6ex]{0ex}{3ex} 
$V_n=\frac{(2n)!\,\lfloor(3n-1)/2\rfloor!}{3^{\lfloor(n-1)/2\rfloor}\,(3n)!\,\lfloor(n-1)/2\rfloor!}\prod_{i=0}^n\frac{(3i)!}{(n+i)!}$.
As shown by Okada~\cite[Thm.~1.2 (A5) \&~(A6)]{Oka06}, $V_n$ is the number of $(2n+1)\times(2n+1)$ VHSASMs.

\subsection{HTSASMs and DASASMs}
Using the same argument as used for odd-order DASASMs in Section~\ref{DASASMs}, it can be seen that the central entry 
of an odd-order HTSASM must again be~$\pm1$. 
Denoting the sets of all $(2n+1)\times(2n+1)$ HTSASMs with fixed central entry~$-1$ and~$1$ as $\mathrm{HTSASM}_-(2n+1)$ 
and $\mathrm{HTSASM}_+(2n+1)$ respectively, it was shown by Razumov and Stroganov~\cite[p.~1197]{RazStr06a} that
\begin{equation}\label{HTSASMStroganovEq}\frac{|\mathrm{HTSASM}_-(2n+1)|}{|\mathrm{HTSASM}_+(2n+1)|}=\frac{n}{n+1}.\end{equation}
Therefore, combining~\eqref{StroganovEq} and~\eqref{HTSASMStroganovEq} gives
\begin{equation}\label{HTSASMDASASM}\frac{|\mathrm{HTSASM}_-(2n+1)|}{|\mathrm{HTSASM}_+(2n+1)|}=
\frac{|\mathrm{DASASM}_-(2n+1)|}{|\DASASM_+(2n+1)|}.\end{equation}
Since $\DASASM(2n+1)_\pm$ is a subset of $\mathrm{HTSASM}_\pm(2n+1)$,~\eqref{HTSASMDASASM} states that
the ratio between the numbers of $(2n+1)\times(2n+1)$ HTSASMs with central entry~$-1$ and~$1$
remains unchanged if the matrices are restricted to those which are also diagonally and antidiagonally symmetric.
It would be interesting to obtain a direct proof of this result, without necessarily showing that
either of the ratios is $n/(n+1)$.

\subsection{Further work}
Some directions in which we are undertaking further work closely
related to that of this paper are as follows.

First, as discussed in Section~\ref{LocRel}, the boundary weights used for odd-order DASASMs are
a special case of the most general boundary weights which satisfy the reflection equation
for the six-vertex model.  By using other boundary weights in the odd-order DASASM partition function,
properties analogous to those of Sections~\ref{GenProp} and~\ref{Spec} are again satisfied, and
it is possible to obtain results, including enumeration formulae, for certain subclasses of odd-order DASASMs.
Work on this has been reported in~\cite{AyyBehFis16}.

Second, it is straightforward to define cases of the six-vertex model which are similar
to the case considered in this paper, and whose configurations are in bijection with
DSASMs or even-order DASASMs.  The underlying graphs for these cases have already been introduced by
Kuperberg~\cite[Figs.~12 \&~13]{Kup02}, in order to study OSASMs
and OOSASMs.  (More precisely, these are graphs for $2n\times2n$ DSASMs and $4n\times4n$ 
DASASMs, but the generalizations to DSASMs of any order and DASASMs of any
even order are trivial.)  Replacing the boundary weights which were used for OSASMs and OOSASMs
by those used in Table~\ref{weights} for odd-order DASASMs, leads to partition functions which give
the numbers of DSASMs or even-order DASASMs, when $q=e^{i\pi/6}$ and the other parameters are all~1.  
Alternatively, using yet further boundary weights leads to 
partition functions associated with subclasses of DSASMs and even-order DASASMs
other than OSASMs and OOSASMs. 
All of these partition functions satisfy properties 
analogous to those identified for the odd-order DASASM partition function in Sections~\ref{GenProp} and~\ref{Spec},
which enables various results to be obtained for these cases.  Work on this will be reported in~\cite{BehFis16a,BehFis16b}.

\section*{Acknowledgements}
We thank Arvind Ayyer for very helpful discussions.
The first two authors acknowledge hospitality and support from the Galileo Galilei Institute, Florence, Italy, during the programme
``Statistical Mechanics, Integrability and Combinatorics'' held in May--June 2015.
The second author acknowledges support from the Austrian Science Foundation FWF, START grant Y463.
The third author acknowledges support from Research Programs L1-069 and Z1-5434 of the Slovenian Research Agency.

\let\oldurl\url
\makeatletter
\renewcommand*\url{%
        \begingroup
        \let\do\@makeother
        \dospecials
        \catcode`{1
        \catcode`}2
        \catcode`\ 10
        \url@aux
}
\newcommand*\url@aux[1]{%
        \setbox0\hbox{\oldurl{#1}}%
        \ifdim\wd0>\linewidth
                \strut
                \\
                \vbox{%
                        \hsize=\linewidth
                        \kern-\lineskip
                        \raggedright
                        \strut\oldurl{#1}%
                }%
        \else
                \hskip0pt plus\linewidth
                \penalty0
                \box0
        \fi
        \endgroup
}
\makeatother
\gdef\MRshorten#1 #2MRend{#1}
\gdef\MRfirsttwo#1#2{\if#1M
MR\else MR#1#2\fi}
\def\MRfix#1{\MRshorten\MRfirsttwo#1 MRend}
\renewcommand\MR[1]{\relax\ifhmode\unskip\spacefactor3000 \space\fi
  \MRhref{\MRfix{#1}}{{\tiny \MRfix{#1}}}}
\renewcommand{\MRhref}[2]{
 \href{http://www.ams.org/mathscinet-getitem?mr=#1}{#2}}

\bibliography{Bibliography}

\providecommand{\bysame}{\leavevmode\hbox to3em{\hrulefill}\thinspace}
\begin{thebibliography}{10}

\bibitem{AvaDuc09}
J.-C. Aval and P.~Duchon, \emph{Enumeration of alternating sign matrices of
  even size (quasi)-invariant under a quarter-turn rotation}, 21st
  {I}nternational {C}onference on {F}ormal {P}ower {S}eries and {A}lgebraic
  {C}ombinatorics ({FPSAC} 2009), Discrete Math. Theor. Comput. Sci. Proc., AK,
  Assoc. Discrete Math. Theor. Comput. Sci., Nancy, 2009, pp.~115--126,
  \href{http://arxiv.org/abs/0906.3445}{\path{arXiv:0906.3445}}. \MR{2721506 (}

\bibitem{AvaDuc10}
\bysame, \emph{Enumeration of alternating sign matrices of even size
  (quasi-)invariant under a quarter-turn rotation}, Electron. J. Combin.
  \textbf{17} (2010), Research Paper 51, 20 pp.\ (electronic),
  \href{http://arxiv.org/abs/0910.3047}{\path{arXiv:0910.3047}}. \MR{2607337 (}

\bibitem{AyyBehFis17}
A.~Ayyer, R.~Behrend, and I.~Fischer, \emph{Factorization theorems for
  classical group characters, with applications to alternating sign matrices},
  In preparation.

\bibitem{AyyBehFis16}
\bysame, \emph{Extreme diagonally and antidiagonally symmetric alternating sign
  matrices of odd order}, 2016, preprint,
  \href{http://arxiv.org/abs/1611.03823}{\path{arXiv:1611.03823}}.

\bibitem{AyyRom13}
A.~Ayyer and D.~Romik, \emph{New enumeration formulas for alternating sign
  matrices and square ice partition functions}, Adv. Math. \textbf{235} (2013),
  161--186, \href{http://arxiv.org/abs/1202.3651}{\path{arXiv:1202.3651}},
  \href{http://dx.doi.org/10.1016/j.aim.2012.11.006}{\path{doi}}. \MR{3010055}

\bibitem{Bax82}
R.~Baxter, \emph{Exactly solved models in statistical mechanics}, Academic
  Press Inc. [Harcourt Brace Jovanovich Publishers], London, 1989, Reprint of
  the 1982 original. \MR{998375 (9}

\bibitem{Beh13}
R.~Behrend, \emph{Multiply-refined enumeration of alternating sign matrices},
  Adv. Math. \textbf{245} (2013), 439--499,
  \href{http://arxiv.org/abs/1203.3187}{\path{arXiv:1203.3187}},
  \href{http://dx.doi.org/10.1016/j.aim.2013.05.026}{\path{doi}}. \MR{3084435}

\bibitem{BehFis16a}
R.~Behrend and I.~Fischer, \emph{Alternating sign trapezoids and cyclically
  symmetric lozenge tilings of hexagons with a central triangular hole}, In
  preparation.

\bibitem{BehFis16b}
\bysame, \emph{Diagonally symmetric alternating sign matrices}, In preparation.

\bibitem{BetWhe16}
D.~Betea and M.~Wheeler, \emph{Refined {C}auchy and {L}ittlewood identities,
  plane partitions and symmetry classes of alternating sign matrices}, J.
  Combin. Theory Ser. A \textbf{137} (2016), 126--165,
  \href{http://arxiv.org/abs/1402.0229}{\path{arXiv:1402.0229}},
  \href{http://dx.doi.org/10.1016/j.jcta.2015.08.007}{\path{doi}}. \MR{3403518}

\bibitem{BetWheZin15}
D.~Betea, M.~Wheeler, and P.~Zinn-Justin, \emph{Refined {C}auchy/{L}ittlewood
  identities and six-vertex model partition functions: {II}. {P}roofs and new
  conjectures}, J. Algebraic Combin. \textbf{42} (2015), no.~2, 555--603,
  \href{http://arxiv.org/abs/1405.7035}{\path{arXiv:1405.7035}},
  \href{http://dx.doi.org/10.1007/s10801-015-0592-3}{\path{doi}}. \MR{3369568}

\bibitem{BouHab93}
M.~Bousquet-M{\'e}lou and L.~Habsieger, \emph{Sur les matrices \`a signes
  alternants}, Discrete Math. \textbf{139} (1995), no.~1-3, 57--72, Formal
  power series and algebraic combinatorics (Montreal, QC, 1992),
  \href{http://dx.doi.org/10.1016/0012-365X(94)00125-3}{\path{doi}}.
  \MR{1336832 (}

\bibitem{Bre99}
D.~Bressoud, \emph{Proofs and confirmations. the story of the alternating sign
  matrix conjecture}, MAA Spectrum, Mathematical Association of America and
  Cambridge University Press, Washington, DC and Cambridge, 1999. \MR{1718370
  (}

\bibitem{Can13}
L.~Cantini, \emph{Determinantal identities for doubly refined enumerations of
  alternating sign matrices}, 2013, Talk at Workshop on Combinatorial Physics,
  Cardiff University, http://mathsevents.cf.ac.uk/combphys2013/
  resources/Cantini-DeterminantalIdentitiesForDoublyRefinedEnumerationsOfAlternatingSignMatrices.pdf.

\bibitem{CanSpo11}
L.~Cantini and A.~Sportiello, \emph{Proof of the {R}azumov-{S}troganov
  conjecture}, J. Combin. Theory Ser. A \textbf{118} (2011), no.~5, 1549--1574,
  \href{http://arxiv.org/abs/1003.3376}{\path{arXiv:1003.3376}},
  \href{http://dx.doi.org/10.1016/j.jcta.2011.01.007}{\path{doi}}. \MR{2771600}

\bibitem{CanSpo14}
\bysame, \emph{A one-parameter refinement of the {R}azumov-{S}troganov
  correspondence}, J. Combin. Theory Ser. A \textbf{127} (2014), 400--440,
  \href{http://arxiv.org/abs/1202.5253}{\path{arXiv:1202.5253}},
  \href{http://dx.doi.org/10.1016/j.jcta.2014.07.003}{\path{doi}}. \MR{3252669}

\bibitem{Che84}
I.~Cherednik, \emph{Factorizing particles on a half line and root systems},
  Theoret. and Math. Phys. \textbf{61} (1984), no.~1, 977--983,
  \href{http://dx.doi.org/10.1007/BF01038545}{\path{doi}}. \MR{774205 (8}

\bibitem{CiuKra09}
M.~Ciucu and C.~Krattenthaler, \emph{A factorization theorem for classical
  group characters, with applications to plane partitions and rhombus tilings},
  Advances in combinatorial mathematics, Springer, Berlin, 2009, pp.~39--59,
  \href{http://arxiv.org/abs/0812.1251}{\path{arXiv:0812.1251}},
  \href{http://dx.doi.org/10.1007/978-3-642-03562-3_3}{\path{doi}}. \MR{2683226
  (}

\bibitem{ColPro05a}
F.~Colomo and A.~Pronko, \emph{Square ice, alternating sign matrices, and
  classical orthogonal polynomials}, J. Stat. Mech. Theory Exp. (2005), P01005,
  33 pp.\ (electronic),
  \href{http://arxiv.org/abs/math-ph/0411076}{\path{arXiv:math-ph/0411076}},
  \href{http://dx.doi.org/10.1088/1742-5468/2005/01/P01005}{\path{doi}}.
  \MR{2114554 (}

\bibitem{ColPro06}
\bysame, \emph{The role of orthogonal polynomials in the six-vertex model and
  its combinatorial applications}, J. Phys. A \textbf{39} (2006), no.~28,
  9015--9033,
  \href{http://arxiv.org/abs/math-ph/0602033}{\path{arXiv:math-ph/0602033}},
  \href{http://dx.doi.org/10.1088/0305-4470/39/28/S15}{\path{doi}}. \MR{2240471
  (}

\bibitem{Deg09}
J.~de~Gier, \emph{Fully packed loop models on finite geometries}, Polygons,
  polyominoes and polycubes, Lecture Notes in Phys., vol. 775, Springer,
  Dordrecht, 2009, pp.~317--346,
  \href{http://arxiv.org/abs/0901.3963}{\path{arXiv:0901.3963}},
  \href{http://dx.doi.org/10.1007/978-1-4020-9927-4_13}{\path{doi}}.
  \MR{2790336}

\bibitem{DegPyaZin09}
J.~de~Gier, P.~Pyatov, and P.~Zinn-Justin, \emph{Punctured plane partitions and
  the {$q$}-deformed {K}nizhnik--{Z}amolodchikov and {H}irota equations}, J.
  Combin. Theory Ser. A \textbf{116} (2009), no.~4, 772--794,
  \href{http://arxiv.org/abs/0712.3584}{\path{arXiv:0712.3584}},
  \href{http://dx.doi.org/10.1016/j.jcta.2008.11.008}{\path{doi}}. \MR{2513634
  (}

\bibitem{DevGon93}
H.~de~Vega and A.~Gonz{\'a}lez-Ruiz, \emph{Boundary {$K$}-matrices for the six
  vertex and the {$n(2n-1)\ A_{n-1}$} vertex models}, J. Phys. A \textbf{26}
  (1993), no.~12, L519--L524,
  \href{http://arxiv.org/abs/hep-th/9211114}{\path{arXiv:hep-th/9211114}},
  \href{http://dx.doi.org/10.1088/0305-4470/26/12/007}{\path{doi}}. \MR{1236142
  (}

\bibitem{ElkKupLarPro92b}
N.~Elkies, G.~Kuperberg, M.~Larsen, and J.~Propp, \emph{Alternating-sign
  matrices and domino tilings. {II}}, J. Algebraic Combin. \textbf{1} (1992),
  no.~3, 219--234,
  \href{http://arxiv.org/abs/math/9201305}{\path{arXiv:math/9201305}},
  \href{http://dx.doi.org/10.1023/A:1022483817303}{\path{doi}}. \MR{1194076 (}

\bibitem{Fis07}
I.~Fischer, \emph{A new proof of the refined alternating sign matrix theorem},
  J. Combin. Theory Ser. A \textbf{114} (2007), no.~2, 253--264,
  \href{http://arxiv.org/abs/math/0507270}{\path{arXiv:math/0507270}},
  \href{http://dx.doi.org/10.1016/j.jcta.2006.04.004}{\path{doi}}. \MR{2293090
  (}

\bibitem{FisRie15}
I.~Fischer and L.~Riegler, \emph{Vertically symmetric alternating sign matrices
  and a multivariate {L}aurent polynomial identity}, Electron. J. Combin.
  \textbf{22} (2015), no.~1, Paper 1.5, 32 pp.\ (electronic),
  \href{http://arxiv.org/abs/1403.0535}{\path{arXiv:1403.0535}}. \MR{3315447}

\bibitem{GhoZam94}
S.~Ghoshal and A.~Zamolodchikov, \emph{Boundary {$S$} matrix and boundary state
  in two-dimensional integrable quantum field theory}, Internat. J. Modern
  Phys. A \textbf{9} (1994), no.~21 \& 24, 3841--3885 \& 4353,
  \href{http://arxiv.org/abs/hep-th/9306002}{\path{arXiv:hep-th/9306002}},
  \href{http://dx.doi.org/10.1142/S0217751X94001552}{\path{doi}}. \MR{1285930
  (}

\bibitem{HagMor16}
C.~Hagendorf and A.~Morin-Duchesne, \emph{Symmetry classes of alternating sign
  matrices in a nineteen-vertex model}, J. Stat. Mech. Theory Exp. (2016),
  053111, 68 pp.\ (electronic),
  \href{http://arxiv.org/abs/1601.01859}{\path{arXiv:1601.01859}},
  \href{http://dx.doi.org/10.1088/1742-5468/2016/05/053111}{\path{doi}}.
  \MR{3514161}

\bibitem{Ize87}
A.~Izergin, \emph{Partition function of the six-vertex model in a finite
  volume}, Soviet Phys. Dokl. \textbf{32} (1987), no.~11, 878--879. \MR{919260
  (8}

\bibitem{Kor82}
V.~Korepin, \emph{Calculation of norms of {B}ethe wave functions}, Comm. Math.
  Phys. \textbf{86} (1982), no.~3, 391--418,
  \href{http://dx.doi.org/10.1007/BF01212176}{\path{doi}}. \MR{677006 (8}

\bibitem{Kup96}
G.~Kuperberg, \emph{Another proof of the alternating-sign matrix conjecture},
  Internat. Math. Res. Notices (1996), no.~3, 139--150,
  \href{http://arxiv.org/abs/math/9712207}{\path{arXiv:math/9712207}},
  \href{http://dx.doi.org/10.1155/S1073792896000128}{\path{doi}}. \MR{1383754
  (}

\bibitem{Kup02}
\bysame, \emph{Symmetry classes of alternating-sign matrices under one roof},
  Ann. of Math. (2) \textbf{156} (2002), no.~3, 835--866,
  \href{http://arxiv.org/abs/math/0008184}{\path{arXiv:math/0008184}},
  \href{http://dx.doi.org/10.2307/3597283}{\path{doi}}. \MR{1954236 (}

\bibitem{MilRobRum82}
W.~Mills, D.~Robbins, and H.~Rumsey, Jr., \emph{Proof of the {M}acdonald
  conjecture}, Invent. Math. \textbf{66} (1982), no.~1, 73--87,
  \href{http://dx.doi.org/10.1007/BF01404757}{\path{doi}}. \MR{652647 (8}

\bibitem{MilRobRum83}
\bysame, \emph{Alternating sign matrices and descending plane partitions}, J.
  Combin. Theory Ser. A \textbf{34} (1983), no.~3, 340--359,
  \href{http://dx.doi.org/10.1016/0097-3165(83)90068-7}{\path{doi}}. \MR{700040
  (8}

\bibitem{MilRobRum86}
\bysame, \emph{Self-complementary totally symmetric plane partitions}, J.
  Combin. Theory Ser. A \textbf{42} (1986), no.~2, 277--292,
  \href{http://dx.doi.org/10.1016/0097-3165(86)90098-1}{\path{doi}}. \MR{847558
  (8}

\bibitem{Oka98}
S.~Okada, \emph{Applications of minor summation formulas to rectangular-shaped
  representations of classical groups}, J. Algebra \textbf{205} (1998), no.~2,
  337--367, \href{http://dx.doi.org/10.1006/jabr.1997.7408}{\path{doi}}.
  \MR{1632816 (}

\bibitem{Oka06}
\bysame, \emph{Enumeration of symmetry classes of alternating sign matrices and
  characters of classical groups}, J. Algebraic Combin. \textbf{23} (2006),
  no.~1, 43--69,
  \href{http://arxiv.org/abs/math/0408234}{\path{arXiv:math/0408234}},
  \href{http://dx.doi.org/10.1007/s10801-006-6028-3}{\path{doi}}. \MR{2218849
  (}

\bibitem{RazStr04b}
A.~Razumov and Y.~Stroganov, \emph{Refined enumerations of some symmetry
  classes of alternating-sign matrices}, Theoret. and Math. Phys. \textbf{141}
  (2004), no.~3, 1609--1630,
  \href{http://arxiv.org/abs/math-ph/0312071}{\path{arXiv:math-ph/0312071}},
  \href{http://dx.doi.org/10.1023/B:TAMP.0000049757.07267.9d}{\path{doi}}.
  \MR{2141132 (}

\bibitem{RazStr06b}
\bysame, \emph{Enumeration of quarter-turn symmetric alternating-sign matrices
  of odd-order}, Theoret. and Math. Phys. \textbf{149} (2006), no.~3,
  1639--1650,
  \href{http://arxiv.org/abs/math-ph/0507003}{\path{arXiv:math-ph/0507003}},
  \href{http://dx.doi.org/10.1007/s11232-006-0148-8}{\path{doi}}. \MR{2321099
  (}

\bibitem{RazStr06a}
\bysame, \emph{Enumerations of half-turn-symmetric alternating-sign matrices of
  odd-order}, Theoret. and Math. Phys. \textbf{148} (2006), no.~3, 1174--1198,
  \href{http://arxiv.org/abs/math-ph/0504022}{\path{arXiv:math-ph/0504022}},
  \href{http://dx.doi.org/10.1007/s11232-006-0111-8}{\path{doi}}. \MR{2283658
  (}

\bibitem{RazStr09}
\bysame, \emph{A statistical model of three colors with boundary conditions of
  domain wall type: functional equations}, Theoret. and Math. Phys.
  \textbf{161} (2009), no.~1, 1325--1339,
  \href{http://arxiv.org/abs/0805.0669}{\path{arXiv:0805.0669}},
  \href{http://dx.doi.org/10.1007/s11232-009-0119-y}{\path{doi}}. \MR{2664880
  (}

\bibitem{Rob91}
D.~Robbins, \emph{The story of {$1,2,7,42,429,7436,\ldots$}}, Math.
  Intelligencer \textbf{13} (1991), no.~2, 12--19,
  \href{http://dx.doi.org/10.1007/BF03024081}{\path{doi}}. \MR{1098216 (}

\bibitem{Rob00}
\bysame, \emph{Symmetry classes of alternating sign matrices}, 2000, preprint,
  \href{http://arxiv.org/abs/math/0008045}{\path{arXiv:math/0008045}}.

\bibitem{RobRum86}
D.~Robbins and H.~Rumsey, Jr., \emph{Determinants and alternating sign
  matrices}, Adv. in Math. \textbf{62} (1986), no.~2, 169--184,
  \href{http://dx.doi.org/10.1016/0001-8708(86)90099-X}{\path{doi}}. \MR{865837
  (8}

\bibitem{Skl88}
E.~Sklyanin, \emph{Boundary conditions for integrable quantum systems}, J.
  Phys. A \textbf{21} (1988), no.~10, 2375--2389,
  \href{http://dx.doi.org/10.1088/0305-4470/21/10/015}{\path{doi}}. \MR{953215
  (8}

\bibitem{Sta86b}
R.~Stanley, \emph{A baker's dozen of conjectures concerning plane partitions},
  Combinatoire \'enum\'erative (Montreal, QC, 1985), Lecture Notes in Math.,
  vol. 1234, Springer, Berlin, 1986, pp.~285--293,
  \href{http://dx.doi.org/10.1007/BFb0072521}{\path{doi}}. \MR{927770 (8}

\bibitem{Sta99}
\bysame, \emph{Enumerative combinatorics. {V}olume 2}, Cambridge Studies in
  Advanced Mathematics~62, Cambridge University Press, Cambridge, 1999,
  \href{http://dx.doi.org/10.1017/CBO9780511609589}{\path{doi}}. \MR{1676282 (}

\bibitem{Str04}
Y.~Stroganov, \emph{Izergin--{K}orepin determinant reloaded}, 2004, preprint,
  \href{http://arxiv.org/abs/math-ph/0409072}{\path{arXiv:math-ph/0409072}}.

\bibitem{Str06}
\bysame, \emph{{I}zergin--{K}orepin determinant at a third root of unity},
  Theoret. and Math. Phys. \textbf{146} (2006), no.~1, 53--62,
  \href{http://arxiv.org/abs/math-ph/0204042}{\path{arXiv:math-ph/0204042}},
  \href{http://dx.doi.org/10.1007/s11232-006-0006-8}{\path{doi}}. \MR{2243403
  (}

\bibitem{Str08}
\bysame, \emph{{$1/N$} phenomenon for some symmetry classes of the odd
  alternating sign matrices}, 2008, preprint,
  \href{http://arxiv.org/abs/0807.2520}{\path{arXiv:0807.2520}}.

\bibitem{Tsu98}
O.~Tsuchiya, \emph{Determinant formula for the six-vertex model with reflecting
  end}, J. Math. Phys. \textbf{39} (1998), no.~11, 5946--5951,
  \href{http://arxiv.org/abs/solv-int/9804010}{\path{arXiv:solv-int/9804010}},
  \href{http://dx.doi.org/10.1063/1.532606}{\path{doi}}. \MR{1653120 (}

\bibitem{WheZin16}
M.~Wheeler and P.~Zinn-Justin, \emph{Refined {C}auchy/{L}ittlewood identities
  and six-vertex model partition functions: {III}. {D}eformed bosons}, Adv.
  Math. \textbf{299} (2016), 543--600,
  \href{http://arxiv.org/abs/1508.02236}{\path{arXiv:1508.02236}},
  \href{http://dx.doi.org/10.1016/j.aim.2016.05.010}{\path{doi}}. \MR{3519476}

\bibitem{Zei96a}
D.~Zeilberger, \emph{Proof of the alternating sign matrix conjecture},
  Electron. J. Combin. \textbf{3} (1996), no.~2, Research Paper 13, 84 pp.\
  (electronic), The Foata Festschrift,
  \href{http://arxiv.org/abs/math/9407211}{\path{arXiv:math/9407211}}.
  \MR{1392498 (}

\end{thebibliography}
\bibliographystyle{amsplainhyper}
\end{document}